\newcommand\J{\mathscr J}
\newcommand\D{\mathcal D}
\newcommand\R{\mathcal R}
\newcommand\SC{\mathcal S}
\newcommand\PC{\mathcal P}
\newtheorem{theorem}{Theorem}[section]
\newtheorem{proposition}[theorem]{Proposition}
\newtheorem{cor}[theorem]{Corollary}
\newtheorem{lemma}[theorem]{Lemma}
\newtheorem{remark}[theorem]{Remark}
\newtheorem*{proposition*}{Proposition}
\renewcommand{\theequation}{\arabic{section}.\arabic{equation}}
\newenvironment{manualtheorem}[1]{%
  \manualtheoreminner
}{\endmanualtheoreminner}
\def\bcb{\begin{color}{blue}}
\def\bcr{\begin{color}{red}} 
\def\bcv{\begin{color}{violet}} 
\def\ec{\end{color}}
\title{The vacuum boundary problem for the spherically symmetric compressible Euler equations with positive density and unbounded entropy}
\author{Calum Rickard\footnote{Department of Mathematics, University of Southern California, Los Angeles, USA}}
\date{}
\begin{document}

\maketitle

\abstract{Global stability of the spherically symmetric nonisentropic compressible Euler equations with positive density around global-in-time background affine solutions is shown in the presence of free vacuum boundaries. Vacuum is achieved despite a non-vanishing density by considering a negatively unbounded entropy and we use a novel weighted energy method whereby the exponential of the entropy will act as a changing weight to handle the degeneracy of the vacuum boundary. Spherical symmetry introduces a coordinate singularity near the origin for which we adapt a method developed for the Euler-Poisson system \cite{guo2018continued} to our problem.
}

\section{Introduction}
We consider the free boundary compressible Euler equations for ideal gases in three space dimensions
\begin{alignat}{2} 
\rho(\partial_t \mathbf{u} + \mathbf{u} \cdot \nabla \mathbf{u}) + \nabla p  & = 0 & \qquad \text{on} \enskip \Omega(t), \label{E:MOM} \\
\partial_t \rho + \text{div}(\rho\mathbf{u})   & = 0 &\qquad \text{on} \enskip \Omega(t), \label{E:M} \\ 
\partial_t S + \mathbf{u} \cdot \nabla S & = 0 &\qquad \text{on} \enskip \Omega(t), \label{E:S}
\end{alignat}
coupled with equation of state for an ideal gas
\begin{equation} 
p(\rho,S)=\rho^{\gamma} e^{S}, \label{E:EEOS}
\end{equation}
where $\mathbf{u}$ is the fluid velocity vector field, $\rho$ is the density, $S$ is the entropy, $p$ is the pressure, $\gamma > 1$ is the adiabatic constant and $\Omega(t) \subset \mathbb{R}^3$ is a time dependent, open bounded domain with boundary $\partial \Omega(t)$ where $t \in [0,T]$ for some $T>0.$

Our boundary conditions are the physical vacuum boundary condition coupled with kinematic boundary condition
\begin{alignat}{2}
p&=0 & \qquad \text{on} \enskip \partial \Omega(t), \label{E:VACUUM} \\
 -\infty < \frac{\partial c_s^2}{\partial \mathbf{n}}  &<0 & \qquad \text{on} \enskip \partial \Omega(t), \label{E:ENORMALDERIVATIVE} \\ 
 \mathcal{V} (\partial\Omega(t))&= \mathbf{u}\cdot \mathbf{n}(t) & \qquad \text{on} \enskip \partial \Omega(t),\label{E:VELOCITYBDRYE}       
 \end{alignat}   
with $\mathbf{n}$ the outward unit normal vector to $\partial\Omega(t)$, $\frac{\partial}{\partial \mathbf{n}}$ the outward normal derivative, $c_s:=\sqrt{\frac{\partial p}{\partial \rho}}=\sqrt{\gamma \rho^{\gamma-1} e^S}$ is the speed of sound and $\mathcal{V} (\partial\Omega(t))$ the normal velocity of $\partial\Omega(t)$. 

Finally, we demand positivity of the initial density throughout the starting domain and boundary
\begin{equation}
\rho_0 > 0 \qquad \text{in} \enskip \Omega(0) \cup \partial\Omega(0). \label{E:POSITIVEDENSITY}
\end{equation}
Our study of the Euler system with vacuum (\ref{E:VACUUM}) and a positive density (\ref{E:POSITIVEDENSITY}) was motivated by the work of Ovsyannikov \cite{ovsyannikov1956new} and Borisov-Kilin-Mamaev \cite{borisov2009hamiltonian} in which examples of global-in-time solutions were given in this context. Considering the presence of vacuum (\ref{E:VACUUM}) and our equation of state (\ref{E:EEOS}), a positive density implies formally our entropy must become unbounded and approach negative infinity towards the boundary. Unbounded entropy profiles have featured in work on black holes \cite{hsu2008black,hsu2009monsters} and an entropy diverging to negative infinity was studied in the context of dynamical systems used to model particle states \cite{evans2002comments}.

In our study of the vacuum free boundary nonisentropic Euler system with positive initial density (\ref{E:MOM})-(\ref{E:POSITIVEDENSITY}) we will make the assumption of spherical symmetry. The novel weight structure unique to this problem whereby the entropy profile will function as our changing weight leads to a fundamental loss of weight with respect to derivatives in the vorticity equation. Thus to avoid this problem, we assume radial symmetry for the Lagrangian flow map $\zeta(t,y)$
\begin{equation}
\zeta(t,y)=\chi(t,r)y, \quad r=|y|,
\end{equation}
where $\chi$ will now function as our radial flow map in Lagrangian variables. This assumption will avoid the need to have control of the problematic curl term due to loss of weight, but will introduce a coordinate singularity near the origin $r=0$ for which we employ the methodolodgy developed in the recent work \cite{guo2018continued} to handle this singularity. In Eulerian coordinates, the spherically symmetric free boundary Euler equations are
\begin{alignat}{2}
\rho(\partial_t u + u \partial_r u) + \partial_r p  & = 0 & \qquad \text{in} \enskip (0,R(t)), \label{E:MOMSS} \\    
\partial_t (r^2 \rho) + \partial_r (r^2 \rho u)  & = 0 & \quad \quad \text{in} \enskip (0,R(t)), \label{E:MSS} \\ 
\partial_t S + u \partial_r S & = 0 & \quad \quad \text{in} \enskip (0,R(t)), \label{E:SSS}
\end{alignat}
where 
\begin{equation}\label{E:EULERIANSOLNS}
\mathbf{u}(t,x)=(x/r)u(t,r), \quad \rho(t,x)=\rho(t,r), \quad S(t,x)=S(t,r), \quad r=|x|,
\end{equation}
and the moving domain $\Omega(t)=B_{R(t)}(\mathbf{0})$.

Collectively, we will study the Cauchy problem for the spherically symmetric vacuum free boundary nonisentropic Euler system with positive density. To the best of our knowledge there are no known previous global-in-time existence results for this system, with or without spherical symmetry. The main goal of this article is to construct open sets of initial data that lead to global solutions to the 
positive density nonisentropic Euler system with spherical symmetry in the presence of free vacuum boundaries.

Before we move on, we briefly discuss some known results relevant to the present article. It is well-known that the Euler equations are hyperbolic and in the whole space the existence of $C^1$ local-in-time positive density solutions follows from the theory of symmetric hyperbolic systems \cite{kato1975cauchy,majda1984compressible}. Serre \cite{Se1997} and Grassin \cite{grassin1998global} proved global existence in the whole space for a special class of initial data by perturbing solutions to the vectorial Burgers equation. Recently, Rickard \cite{rickard2020global} proved global-in-time well-posedness of the Euler equations with heat transport by the pertubation of Dyson's \cite{10.2307/24902147} isothermal affine solutions, see below for details on affine solutions. In the other direction, Sideris \cite{sideris1985} showed that singularities must form if the density is a strictly positive constant outside of a bounded set. Christodoulou-Miao~\cite{MiCr} give a thorough description of shock formation for irrotational fluids starting from smooth initial data. We refer to the works of Speck and Luk-Speck~\cite{luk2018shock,Sp} for a more general framework with respect to shock formation. Buckmaster-Shkoller-Vicol \cite{buckmaster2020shock} recently gave a constructive proof of shock formation leading to vorticity formation from an open set of initial data. We remark that these singularity and shock formation results do not apply to the physical vacuum free boundary problem.

Important examples of global-in-time solutions are given by Ovsyannikov \cite{ovsyannikov1956new}, Dyson \cite{10.2307/24902147} and Sideris \cite{SiderisSS,MR3634025}. These are the so-called affine motions which are special expanding global solutions found by a separation-of-variables ansatz for the Lagrangian flow map, see Section \ref{S:AFFINE}. Notably for our current work, the affine solutions found by Ovsyannikov \cite{ovsyannikov1956new} can be used to construct positive density solutions in the presence of vacuum boundaries, see Borisov-Kilin-Mamaev \cite{borisov2009hamiltonian}.

For the vacuum free boundary problem, local well-posedness with physical vacuum has been proven by Coutand-Shkoller \cite{Coutand2012} and Jang-Masmoudi \cite{doi:10.1002/cpa.21517}. In particular, we will adapt the method of Jang-Masmoudi \cite{doi:10.1002/cpa.21517} to obtain local well-posedness for our system. The global affine solutions of Sideris \cite{SiderisSS,MR3634025} were constructed in the free vacuum boundary setting and importantly satisfy the physical vacuum condition~\eqref{E:ENORMALDERIVATIVE}. In the isentropic (constant entropy) case,  Had\v zi\'c-Jang \cite{1610.01666} and Shkoller-Sideris~\cite{shkoller2017global} estabilished the nonlinear stability of the Sideris solutions. For the nonisentropic setting with vanishing density and bounded entropy, Rickard-Had\v zi\'c-Jang \cite{rickard2019global} proved global existence by perturbing around a rich class of nonisentropic affine motions. Finally, in the isentropic setting, Parmeshwar-Had\v zi\'c-Jang \cite{PHJ2019} do not rely on background affine solutions and obtain global existence of small density expanding solutions.

In the previous vacuum free boundary problem results, the density is vanishing at the boundary. This contrasts with the current article in which we consider a positive density. In particular, our entropy profile will instead behave like a distance function and will operate as a changing weight in our analysis.

\subsection{Spherically Symmetric Lagrangian Coordinates}\label{S:SSL}
To study the vacuum free boundary nonisentropic Euler system with positive initial density (\ref{E:MOM})-(\ref{E:POSITIVEDENSITY}) we move to Lagrangian coordinates which brings the problem onto a fixed domain. Define the flow map $\zeta$ as follows
\begin{align}\label{E:LANGRANGIANDEFN}
\partial_t \zeta (t,y) &= \mathbf{u}(t,\zeta(t,y)), \\
\zeta(0,y)&=\zeta_0(y),
\end{align} 
where $\zeta_0$ is a sufficiently smooth diffeomorphism. Introduce the notation
\begin{align}
\mathscr{A}_\zeta := [D \zeta]^{-1}, \quad &\text{(Inverse of the Jacobian matrix)}\label{E:SCRAZETA}  \\ 
\mathscr{J}_\zeta := \det[D \zeta], \quad &\text{(Jacobian determinant)} \label{E:SCRJZETA} \\
\bar{\rho}(y):=\rho_0(\zeta_0(y)) \mathscr{J}_\zeta(0,y) \quad &\text{(Lagrangian density profile)} \\
d(y):=e^{S_0(\zeta_0(y))} \quad &\text{(Langrangian entropy profile).}
\end{align}
Then it is known \cite{rickard2019global} that the nonisentropic Euler equations (\ref{E:MOM})-(\ref{E:S}) reduce to
\begin{equation}\label{E:LEULER}
\bar{\rho}  \partial_t^2 \zeta^i + (\bar{\rho}^{\gamma} d \, [\mathscr{A}_\zeta]_i^k \mathscr{J}_\zeta^{1-\gamma} )_{,k}=0.
\end{equation}
As mentioned above, we make the assumption of spherical symmetry for the remainder of this paper. Therefore we make the ansatz
\begin{equation}\label{E:SSASSUMPTION}
\zeta(t,y)=\chi(t,r)y, \quad r=|y|.
\end{equation}
Then $\mathscr{A}_\zeta$ and $\mathscr{J}_\zeta$ transform as follows \cite{jang2014nonlinear}
\begin{align}
\mathscr{A}_\zeta &=\frac{\delta_i^k}{\chi}-\frac{\chi_r y^k y^i}{\chi(\chi+\chi_r r)r}, \\
\mathscr{J}_\zeta &= \chi^2(\chi+\chi_r r). \label{E:JCHI}
\end{align}
Since we are considering the spherically symmetric case, make the assumption that $\bar{\rho}$ and $d$ are radial functions
\begin{align}
\bar{\rho}(y)&=\bar{\rho}(r), \\
d(y)&=d(r).
\end{align}
With these formulae and the fact that $\partial_k = \frac{y_k}{r} \partial_r$, by substituting (\ref{E:SSASSUMPTION}) into (\ref{E:LEULER}) we obtain
\begin{equation}\label{E:LAGRANGIANPREAFF}
\bar{\rho}  \chi_{tt} + \frac{\chi^2}{r} \partial_r \left( \bar{\rho}^{\gamma} d (\chi^2 (\chi+\chi_r r))^{-\gamma} \right)=0.
\end{equation}

\subsection{Nonisentropic Spherically Symmetric Affine Motion}\label{S:AFFINE}
By making the further ansatz $\chi(t,r)=a(t)$ for scalar $a(t)$ with $a(0)>0$, the fundamental affine ODEs found in \cite{10.2307/24902147,ovsyannikov1956new,SiderisSS} are obtained. The ansatz transforms (\ref{E:LAGRANGIANPREAFF}) into
\begin{equation}\label{E:POSTAFFANSATZ}
\bar{\rho} a_{tt} + \frac{a^{2-3\gamma}}{r} \partial_r \left(\bar{\rho}^{\gamma} d \right) = 0. 
\end{equation}
We have that $\bar{\rho}^{\gamma} d$ is independent of $t$ and hence (\ref{E:POSTAFFANSATZ}) will hold if we require the following fundamental affine ODEs to hold
\begin{align}
a_{tt}&=a^{2-3\gamma}, \label{E:AFFREQ1} \\
 \bar{\rho} r &=  - \partial_r (\bar{\rho}^{\gamma} d). \label{E:AFFREQ2}
\end{align}
Positive density motions in the presence of vacuum boundaries were obtained from the Ovsyannikov \cite{ovsyannikov1956new} affine solutions, see Borisov-Kilin-Mamaev \cite{borisov2009hamiltonian}. 
For our positive density free boundary Euler system, special affine solutions are obtained through $\bar{\rho}$ having the following properties
\begin{subequations}     
\begin{equation}\label{E:RHOZEROAFFREGULARITYA}
\bar{\rho} \in C^0[0,1] \cap C^1[0,1],
\end{equation}
\begin{equation}\label{E:RHOZEROAFFREGULARITYB}
\bar{\rho}(r) > 0 \text{ for } r \in [0,1],
\end{equation}
\begin{equation}\label{E:RHOZEROAFFREGULARITYC}
\bar{\rho}\,'(0)=0.
\end{equation} 
\end{subequations} 
Then we consider the following form for $d$ which solves (\ref{E:AFFREQ2})
\begin{equation}\label{E:EXPLICITFORMFORD}
d(r)=\frac{  \int_{r}^1 \ell \bar{\rho} (\ell) \, d \ell }{(\bar{\rho}(r))^\gamma}.
\end{equation}
With (\ref{E:EXPLICITFORMFORD}), we have the fundamental property $d(1)=0$ which is crucial to permit a solution to our positive density Euler system. Our next calculation shows why the vacuum boundary condition (\ref{E:ENORMALDERIVATIVE}) is satisfied by our affine solution
\begin{equation}
(\bar{\rho}^{\gamma-1} d)'(1)=\lim_{r \rightarrow 1^-} \frac{ \int_r^1 \ell \bar{\rho}(\ell) \, d \ell }{\bar{\rho}(r)(r-1)}=\lim_{r \rightarrow 1^-} \frac{ - r \bar{\rho}(r)  }{\bar{\rho}'(r)(r-1)+\bar{\rho}(r)} = -1,
\end{equation}
where we have used L'Hospital's Rule.
Then with $a \in C(\mathbb{R},\mathbb{R}^+) \cap C^\infty(\mathbb{R},\mathbb{R})$ solving the ODE (\ref{E:AFFREQ1}), $\bar{\rho}$ satisfying (\ref{E:RHOZEROAFFREGULARITYA})-(\ref{E:RHOZEROAFFREGULARITYC}) and finally $d$ being given by (\ref{E:EXPLICITFORMFORD}), the associated solution of the positive density Euler system is
\begin{align}
\mathbf{u}_a(t,x)&=\frac{a'(t)x}{a(t)}, \\
\rho_a(t,x)&=\frac{\bar{\rho}(r)}{a(t)}, \label{E:AFFINEDENSITY} \\
S_a(t,x) & = \ln d(r). \label{E:AFFENTROPY}
\end{align}
noting that $r=|\tfrac{x}{a(t)}|$ here. 
\begin{remark}[Unbounded Entropy]
From (\ref{E:AFFENTROPY}), the property $\lim_{r \rightarrow 1^-} d(r) =d(1)= 0$ corresponds to our affine entropy approaching negative infinity as it approaches the boundary.
\end{remark}

Now at this stage, we consider the profiles $\bar{\rho}$ of the form
\begin{align}
\bar{\rho}(r)&=\phi(r), \label{E:BARRHODEMAND}
\end{align}
where    
$\phi \in C^{k}[0,1], \ \phi>0$ satisfying $\phi'(0)=0$ with $k\in \mathbb N$ to be specified. Here we demand the condition  $\phi'(0)=0$ to ensure the regularity of $\accentset{\circ}{\rho}$ at the center as in \eqref{E:RHOZEROAFFREGULARITYC}.
Then from (\ref{E:EXPLICITFORMFORD}), we immediately have $d \in C^{k}[0,1]$ also.

Finally, we note for all $\gamma > 1$ we have the asymptotics
\begin{equation}
a(t) \sim 1+t, \, t \geq 0.
\end{equation}
This follows from Theorem 3 \cite{MR3634025} if we consider the special case of the full 3D problem where $A(t)=\text{diag}(a(t),a(t),a(t))$.

We denote the set of affine motions under consideration by $\mathscr{S}$. To recap, the set $\mathscr{S}$ is parametrized by the quadruple
\begin{equation}    
(a(0),a'(0),\phi) \in \mathbb{R}_+ \times \mathbb{R}  \times \mathcal Z_k,
\end{equation}
where
\begin{equation}\label{E:PHIDEMAND}
\mathcal Z_k := \left\{ \phi \in C^k[0,1]: \phi>0, \  \phi'(0)= 0 \right \}
\end{equation}
and we take $k \in \mathbb{N}$ sufficiently large (to be specified later in Theorems \ref{T:LWP} and \ref{T:MAINTHEOREMGAMMALEQ5OVER3}).

\begin{remark}[Eulerian description of spherically symmetric solutions]
At this stage it is worth giving the connections between the Eulerian description of spherically symmetric solutions (\ref{E:EULERIANSOLNS}), spherically symmetric Lagrangian coordinates and background affine motion. For the velocity, using (\ref{E:LANGRANGIANDEFN}) and (\ref{E:SSASSUMPTION}),
\begin{equation}
\mathbf{u}(t,\zeta(t,y))=\partial_t \zeta(t,y) = \chi_t(t,r) y = (y/r) (r \chi_t (r,t)) \; \; \text{so that} \; \;
u(t,|\zeta(t,y)|)=r \chi_t (t,r).
\end{equation}
For the density
\begin{equation}
\rho(t,|\zeta(t,y)|)= \frac{\bar{\rho}(r)}{\chi^2(\chi+\chi_r r)}.
\end{equation}
For the entropy
\begin{equation}
S(t,|\zeta(t,y)|)= \ln d(r).
\end{equation}
\end{remark}
\begin{remark}[Eulerian initial density $\rho_0$ and entropy $S_0$]
The spherically symmetric Eulerian initial density $\rho_0$ and entropy $S_0$ are connected to the background affine motion via
\begin{align}
\rho_0(|x|)&=\frac{\bar{\rho}(|\zeta_0^{-1} (x)|)}{[\chi_0(|\zeta_0^{-1} (x)|)]^2 (\chi_0(|\zeta_0^{-1} (x)|) +\chi_0'(|\zeta_0^{-1} (x)|) |\zeta_0^{-1} (x)|)} \notag \\
S_0(|x|)&=\ln d( | \zeta_0^{-1} (y) | ),
\end{align}
where $\chi_0'(r)=\partial_r \chi_0 (r)$.
\end{remark}

\section{Formulation and Main Global Existence Result}
\subsection{Perturbation of Affine Motion}
We derive the equation for the perturbation of our affine motion. With an affine motion $a$ fixed, define the modified flow map $\xi=\frac{\chi}{a}$. We note considering the Lagrangian motion given by $\xi(t,r)y$ then the $\mathscr{J}_\xi$ equivalent of $\mathscr{J}_\zeta$ is given by
\begin{equation}
\mathscr{J}_\xi = \xi^2(\xi+\xi_r r):=\mathscr{J}
\end{equation}
Thus $\mathscr{J}_\xi = a^3 \mathscr{J}_\zeta$ from (\ref{E:JCHI}). Now from (\ref{E:LAGRANGIANPREAFF}) we have
\begin{equation}
\bar{\rho}\left(a \xi_{tt} + 2 a_t \xi_t + a_{tt} \xi\right) +   a^{2-3\gamma}\frac{\xi^2}{r} \partial_r\left( \bar{\rho}^{\gamma} d \J^{-\gamma}) \right)=0. 
\end{equation}
Apply (\ref{E:AFFREQ1}) and multiply by $a^{3\gamma-2}$
\begin{equation}\label{E:ETAPRETAU}
\bar{\rho}\left(a^{3\gamma-1} \xi_{tt} +  a^{3\gamma-2} a_t \xi_t + \xi\right) + \frac{\xi^2}{r} \partial_r\left( \bar{\rho}^{\gamma} d \J^{-\gamma}) \right)=0. 
\end{equation}
Next make a change of time variable by setting 
$$\frac{d \tau}{dt}=\frac{1}{a}.$$
Then we can formulate (\ref{E:ETAPRETAU}) as  
\begin{equation}\label{E:ETAPOSTTAU}
\bar{\rho}\left(a^{3\gamma-3} \xi_{\tau \tau} +  a^{3\gamma-4} a_\tau \xi_\tau + \xi \right) + \frac{\xi^2}{r} \partial_r\left( \bar{\rho}^{\gamma} d \J^{-\gamma}) \right)=0. 
\end{equation}
Note $\xi(r) \equiv 1$ corresponds to affine motion. Introducing the perturbation
\begin{equation}
\uptheta(\tau,r):=\xi(\tau,r)-1,
\end{equation}
equation (\ref{E:ETAPOSTTAU}) can be written in terms of $\uptheta$ 
\begin{equation}\label{E:THETA1}
\bar{\rho}\left(a^{3\gamma-3} \uptheta_{\tau \tau} +  a^{3\gamma-4} a_\tau \uptheta_\tau \right)
+ \bar{\rho} (1+\uptheta) +  \frac{(1+\uptheta)^2}{r} \partial_r \left( \bar{\rho}^{\gamma} d \J^{-\gamma} \right)=0.
\end{equation}
Finally apply (\ref{E:AFFREQ2}) and note $(1+\uptheta)-(1+\uptheta)^2=-(1+\uptheta)\uptheta$ to obtain
\begin{equation}\label{E:THETAGAMMALEQ3}
\bar{\rho} \left(a^{3\gamma-3} \uptheta_{\tau \tau} +  a^{3\gamma-4} a_\tau \uptheta_\tau\right) -   \bar{\rho} \uptheta (1+\uptheta) +  \frac{(1+\uptheta)^2}{r} \partial_r \left( \bar{\rho}^{\gamma} d \,(\mathscr{J}^{-\gamma}-1 )\right)=0.
\end{equation}

\subsection{The $H$ Equation}
We derive the equation to be used in our estimates that will help us overcome the coordinate singularity near the origin $r=0$. Divide (\ref{E:THETAGAMMALEQ3}) by $\bar{\rho}$
\begin{equation}\label{E:DIVIDETHETAEQNBARRHO}
a^{3 \gamma-3} \uptheta_{\tau \tau} + a^{3 \gamma-4} a_{\tau} \uptheta_\tau - \uptheta(1+\uptheta) + \frac{(1+\uptheta)^2}{\bar{\rho} r} \partial_r ( \bar{\rho}^\gamma d (\mathscr{J}^{-\gamma} - 1)) = 0.
\end{equation}
Note that using the affine ODE $\bar{\rho} r =  - \partial_r (\bar{\rho}^{\gamma} d)$ we can write
\begin{equation}\label{E:EXPANDTHETAGAMMALEQ3LASTTERM}
\frac{1}{\bar{\rho}} r \partial_r [ (\bar{\rho}^\gamma d) (\mathscr{J}^{-\gamma}-1)] = \bar{\rho}^{\gamma-1} d\, r \partial_r (\mathscr{J}^{-\gamma}) -r^2 (\mathscr{J}^{-\gamma}-1).
\end{equation}
Now
\begin{equation}
\partial_r (\mathscr{J}^{-\gamma})=-\gamma \mathscr{J}^{-\gamma-1} \partial_r \J,
\end{equation}
and 
\begin{align}
\partial_r \mathscr{J} & = \partial_r (\xi^2(\xi+\xi_r r)) \notag \\
&= \partial_r ( \xi^2 (1+\uptheta+\uptheta_{r} r )) \notag \\
&= \xi^2 r \uptheta_{r r} + (2 \xi^2 + 2 \xi \xi_r r)\uptheta_r + (2 \xi \xi_r)\uptheta + 2 \xi \xi_r \notag \\
&=\xi^2 r \uptheta_{r r} + (2 \xi^2 + 2 \xi \xi_r r + 2 \xi) \uptheta_r  + (2 \xi \xi_r) \uptheta
\end{align}
where we recall
\begin{equation}
\uptheta=\xi-1
\end{equation}
and hence $\uptheta_r = \xi_r$. Thus
\begin{align}
&\bar{\rho}^{\gamma-1} d \, r \partial_r (\mathscr{J}^{-\gamma} )  \notag \\
&= -\gamma \bar{\rho}^{\gamma-1} d \xi^2 r^2 \uptheta_{r r} \mathscr{J}^{-\gamma-1} -\gamma \bar{\rho}^{\gamma-1} d \, r (2 \xi^2 + 2 \xi \xi_r r + 2 \xi ) \uptheta_r \mathscr{J}^{-\gamma-1} - \gamma \bar{\rho}^{\gamma-1} d \, r (2 \xi \xi_r ) \uptheta \mathscr{J}^{-\gamma-1}.
\end{align}
Then from (\ref{E:EXPANDTHETAGAMMALEQ3LASTTERM})
\begin{align}
& \frac{1}{\bar{\rho}} ( r \partial_r ) ( ( \bar{\rho}^\gamma d ) (\mathscr{J}^{-\gamma} -1 ) ) \notag \\
&= -\gamma \bar{\rho}^{\gamma-1} d \xi^2 r^2 \uptheta_{r r} \mathscr{J}^{-\gamma-1} - \gamma \bar{\rho}^{\gamma-1} r \, d ( 2 \xi^2 + 2 \xi \xi_r r + 2 \xi  ) \mathscr{J}^{-\gamma-1} \uptheta_r \notag \\
&- \gamma \bar{\rho}^{\gamma-1} d \, r (2 \xi \xi_r ) \uptheta \mathscr{J}^{-\gamma-1} - r^2 (\mathscr{J}^{-\gamma} - 1) \notag \\
&=-\gamma r \mathscr{J}^{-\gamma-1} \xi^2 \frac{1}{\bar{\rho}} \partial_r (\bar{\rho}^{\gamma} d \frac{1}{r^2} \partial_r (r^3 \uptheta)) - r^2 ( (\mathscr{J}^{-\gamma} - 1)+\gamma \mathscr{J}^{-\gamma-1} \xi^2 [ r \uptheta_r + 3 \uptheta] ) \notag \\
&- \gamma \bar{\rho}^{\gamma-1} r \, d \mathscr{J}^{-\gamma-1} ( 2 \xi \xi_r r + 2 \xi - 2 \xi^2 ) \uptheta_r - \gamma \bar{\rho}^{\gamma-1} d \, r 2 \xi \xi_r \mathscr{J}^{-\gamma-1} \uptheta \label{E:APPLYEXPANDTHETAGAMMALEQ3LASTTERM}
\end{align}
We notice that
\begin{equation}
2 \xi - 2 \xi^2 \uptheta_r =  2 \xi ( 1- \xi ) \xi_r =  - 2 \xi \xi_r \uptheta,
\end{equation}
which leads to a cancellation on the last line of (\ref{E:APPLYEXPANDTHETAGAMMALEQ3LASTTERM}). Also
\begin{align}
\J^{-\gamma} - 1 &= -\gamma (\J - 1) + ( (\J^{-\gamma} - 1) + \gamma ( \J - 1 )) \\
\gamma \mathscr{J}^{-\gamma-1} \xi^2 ( r \uptheta_r + 3 \uptheta)  & = \gamma \xi^2 (r \uptheta_r + 3 \uptheta ) + \gamma ( \J^{-\gamma-1} -1 ) \xi^2 ( r \partial_r \uptheta + 3 \uptheta). 
\end{align}
Then
\begin{align}
&(\mathscr{J}^{-\gamma} - 1)+\gamma \mathscr{J}^{-\gamma-1} \xi^2 ( r \uptheta_r + 3 \uptheta ) \notag \\
&= -\gamma ( \J - 1) + \gamma \xi^2 (r \partial_r \uptheta + 3 \uptheta )  +  ( (\J^{-\gamma} - 1) + \gamma ( \J - 1 )) + \gamma ( \J^{-\gamma-1} - 1) \xi^2 ( r \partial_r \uptheta + 3 \uptheta).
\end{align}
Now using 
\begin{equation}
\mathscr{J}=\xi^2(\xi+\xi_r r)=(1+\uptheta)^2(1+\uptheta+\uptheta_r r)=1+\frac{1}{r^2} \left( r^3 \left(\uptheta + \uptheta^2 + \frac{\uptheta^3}{3} \right) \right)_r, \label{E:JEXPAND}
\end{equation}
we have
\begin{align}
\mathscr{J}-1&=\frac{1}{r^2} \left( r^3 \left(\uptheta + \uptheta^2 + \frac{\uptheta^3}{3} \right) \right)_r \notag \\
&=\frac{1}{r^2}(3 r^2 (\uptheta+\uptheta^2 + \frac{\uptheta^3}{3}) + r^3 (\uptheta_r + 2 \uptheta \uptheta_r + 3 \uptheta^2 \uptheta_r) \notag \\
&=3 \uptheta + 3 \uptheta^2+\uptheta^3 +r \uptheta_r + 2 \uptheta \uptheta_r + r \uptheta^2 \uptheta_r.
\end{align}
On the other hand, notice that
\begin{align}
\xi^2 ( r \partial_r \uptheta + 3 \uptheta) &= (1+\uptheta)^2 (r \partial_r \uptheta + 3 \uptheta) \notag \\
&=r \partial_r \uptheta + 3 \uptheta + 2 r \uptheta \partial_r \uptheta + \uptheta^2 r \partial_r \uptheta + 3 \uptheta^3 + 6 \uptheta^2.
\end{align}
Hence canceling terms 
\begin{equation}
-\gamma ( \J - 1) + \gamma \xi^2 (r \partial_r \uptheta + 3 \uptheta ) = -\gamma(-3 \uptheta^2-2\uptheta^3).
\end{equation}
Therefore we now have
\begin{align}
&\frac{\xi^2}{\bar{\rho} r^2} (r \partial_r) ( \bar{\rho}^\gamma d (\mathscr{J}^{-\gamma}-1)) = - \gamma \frac{ \xi^4 }{\J^{\gamma+1} r  \bar{\rho} } \partial_r  ( \bar{\rho}^\gamma d \tfrac{1}{r^2} \partial_r (r^3 \uptheta)) \notag \\
& - \gamma \bar{\rho}^{\gamma-1} d \J^{-\gamma-1} 2 \xi^3 \xi_r \uptheta_r \notag \\
& - \xi^2 ( (\J^{-\gamma}-1)+\gamma(\J-1) - \gamma(-3 \uptheta^2 - 2 \uptheta^3)  + \gamma (\J^{-\gamma-1} -1 ) \xi^2 (r \partial_r \uptheta + 3 \uptheta)). \label{E:PREREMAINDERFORM}
\end{align}
Rewrite the second line of (\ref{E:PREREMAINDERFORM}) as follows
\begin{equation}
- \gamma \bar{\rho}^{\gamma-1} d \J^{-\gamma-1} 2 \xi^3 \xi_r \uptheta_r = - 2 \gamma \bar{\rho}^{\gamma-1} d \frac{\xi^3}{\J^{\gamma+1} r^2} (r \partial_r \uptheta)^2
\end{equation}
Then let
\begin{align}
\mathcal{R}_1[\uptheta]&:=- 2 \gamma \bar{\rho}^{\gamma-1} d \frac{\xi^3}{\J^{\gamma+1} r^2} (r \partial_r \uptheta)^2 \\
\mathcal{R}_2[\uptheta]&:=- \xi^2 ( (\J^{-\gamma}-1)+\gamma(\J-1) - \gamma(-3 \uptheta^2 - 2 \uptheta^3)  + \gamma (\J^{-\gamma-1} -1 ) \xi^2 (r \partial_r \uptheta + 3 \uptheta)).
\end{align}
Therefore (\ref{E:DIVIDETHETAEQNBARRHO}) can be written as
\begin{equation}\label{E:THETAPREH}
a^{3 \gamma-3} \uptheta_{\tau \tau} + a^{3 \gamma-4} a_{\tau} \uptheta_\tau - \gamma \frac{ \xi^4 }{\J^{\gamma+1} r  \bar{\rho} } \partial_r  ( \bar{\rho}^\gamma d \tfrac{1}{r^2} \partial_r (r^3 \uptheta)) -  \uptheta(1+\uptheta) + \mathcal{R}_1[\uptheta] + \mathcal{R}_2[\uptheta]  = 0.
\end{equation}
Next let $H:=r \uptheta$. So $\uptheta=\frac{H}{r}$. Then $H$ solves
\begin{align}\label{E:HEQN}
&a^{3 \gamma-3} H_{\tau \tau} + a^{3 \gamma-4} a_{\tau} H_\tau  - \gamma \frac{ \xi^4 }{\J^{\gamma+1}  \bar{\rho} } \partial_r  ( \bar{\rho}^\gamma d \tfrac{1}{r^2} \partial_r (r^2 H)) -  H\left(1+\frac{H}{r}\right) \notag \\
& \qquad + r \mathcal{R}_1[\tfrac{H}{r}] + r \mathcal{R}_2[\tfrac{H}{r}]  = 0,
\end{align}
with the initial conditions 
\begin{equation}\label{E:THETAICGAMMALEQ5OVER3}
H(0,y)=H_0(r), \quad H_\tau(0,r)=\partial_\tau H_0(r), \quad (r \in [0,1]).
\end{equation}

\subsection{Notation}\label{S:NOTATION}
First introduce the radial equivalent of the three-dimensional divergence operator
\begin{equation}
D_r:=\frac{1}{r^2} \partial_r (r^2 \cdot)
\end{equation}
To avoid singularities at $r=0$ when applying high-order derivatives, define
\begin{equation}
\mathcal{D}_j := \begin{cases}
(\partial_r D_r)^{\tfrac{j}{2}} & \text{ if } \  j \text { is even} \\
D_r (\partial_r D_r)^{\tfrac{j-1}{2}} & \text{ if } \  j \text{ is odd}
\end{cases}
\end{equation}
and set $\mathcal{D}_0 = 1$. Also define
\begin{equation}
\bar{\mathcal{D}}_i :=\begin{cases}
\mathcal{D}_0 & \text{ for } \  i =0 \\
\mathcal{D}_{i-1} \partial_r & \text{ for } \ i \geq 1
\end{cases}.
\end{equation}
We will use the following elliptic operators to derive high-order equations
\begin{align}
L_k f &:=\frac{1}{\bar{\rho}^{1+k(\gamma-1)} d^k} \partial_r \left( \bar{\rho}^{\gamma+k(\gamma-1)} d^{1+k} D_r f \right) \label{E:LK} \\
L_k^* h &:= \frac{1}{\bar{\rho}^{1+k(\gamma-1)} d^k} D_r \left( \bar{\rho}^{\gamma+k(\gamma-1)} d^{1+k} \partial_r h \right). \label{E:LKSTAR}
\end{align}
Then define
\begin{equation}
 \mathcal L_{j} \mathcal D_j : = 
\begin{cases}
L_{j} \mathcal D_j & \text{ if $j$ is even}\\
L^\ast_{j}\mathcal D_j & \text{ if $j$ is odd}
\end{cases}.
\end{equation}
Now for any $k \in \mathbb{Z}_{\geq 0}$, we consider the weighted $L^2$ norm
\begin{equation}
\| f \|^2_{k} := \int_0^1 d^k (f)^2 r^2 \, dr
\end{equation}
The smooth cut-off function $\psi \geq 0$ such that
\begin{equation}
\psi=1  \text{ on } [0,\tfrac{1}{2}]\text{, } \psi = 0 \text{ on } [\tfrac{3}{4},1] \text{ and } \psi' \leq 0,
\end{equation}
will be useful. The following vector fields will be important in obtaining high-order estimates successfully taking into account the coordinate singularity near the origin
\begin{align}
\mathcal{P}_{2 j +2}&:=\left\lbrace  \prod_{k=1}^{j+1} \partial_r V_k : V_k \in \{ D_r, \tfrac{1}{r} \} \right\rbrace , \label{E:PEVEN} \\
\mathcal{P}_{2 j +1}&:= \left\lbrace  V_{j+1} \prod_{k=1}^{j} \partial_r V_k : V_k \in \{ D_r,\tfrac{1}{r} \} \right\rbrace , \label{E:PODD}
\end{align}
for $j \geq 0$, and set $\mathcal{P}_0 = \{ 1 \}$. Also define 
\begin{align}
\overline{\mathcal{P}}_{2j+2}&:=\{ W \partial_r : W \in \mathcal{P}_{2 j+1} \} \notag \\
\overline{\mathcal{P}}_{2j+1}&:=\{ W \partial_r : W \in \mathcal{P}_{2 j} \},
\end{align}
for $j \geq 0$, and set $\overline{\mathcal{P}}_0=\{ 1 \}$.

\subsection{High-order Norm}
Our time weights will differ depending on whether $\gamma \in (1,\frac53]$ or $\gamma > \frac53$. This is because we take a slightly different approach for $\gamma > \frac53$ by an adaptation of \cite{shkoller2017global}, applied to our spherically symmetric nonisentropic setting.

On this note, introduce the following $\gamma$ dependent exponents
\begin{equation}
d(\gamma):=\begin{cases}
3\gamma - 3 & \text{ if } \  1<\gamma\leq \frac53 \\
2 & \text{ if } \  \gamma>\frac53
\end{cases}; \quad b(\gamma):=d(\gamma)+3-3\gamma=\begin{cases}
0 & \text{ if } \  1<\gamma\leq \frac53 \\
5-3 \gamma & \text{ if } \  \gamma>\frac53
\end{cases}.
\end{equation} 

Let $N \in \mathbb{N}$. To measure the size $H$, we define the high-order weighted Sobolev norm as follows
\begin{equation}\label{E:SNNORM}
\mathcal{S}^N(H,H_\tau)=\mathcal{S}^N(\tau):=\sup_{0 \leq \tau' \leq \tau} \left\lbrace \sum_{i=0}^N  a^{d(\gamma)} \| \mathcal{D}_i H_\tau \|_i^2 + \sum_{i=0}^{N-1} \| \mathcal{D}_{i+1} H \|_{i+1}^2 + \sum_{i=N} a^{b(\gamma)} \| \mathcal{D}_{i+1} H \|_{i+1}^2 \right\rbrace.
\end{equation}

\subsection{Main Theorem}
Before giving our main theorem, first define the important $a(\tau)$ related quantities
\begin{equation}\label{E:MU1MU0DEFNGAMMALEQ5OVER3}
a_1:=\lim_{\tau \rightarrow \infty} \frac{a_\tau(\tau)}{a(\tau)}, \quad a_0:=\frac{d(\gamma)}{2}a_1.
\end{equation}

\textbf{Local Well-Posedness.} Next, we give the local well-posedness of our system.
\begin{theorem}\label{T:LWP}
Suppose $\gamma > 1$. Fix $N \geq 8$. Let $k \geq N$ in (\ref{E:PHIDEMAND}). Then there are $\epsilon_0>0$, $\lambda_0 > 0$ and $T>0$ such that for every $\epsilon \in (0,\epsilon_0]$, $\lambda \in (0,\lambda_0]$ and pair of initial data for (\ref{E:HEQN}) $(H_0,\partial_\tau H_0)$ satisfying $\mathcal{S}^N(H_0,\partial_\tau H_0) \leq \epsilon$ and $\| H(0) \|^2_0 \leq \lambda$, there exists a unique solution $(H(\tau),H_\tau(\tau)):[0,1] \rightarrow \mathbb{R} \times \mathbb{R}$ to (\ref{E:HEQN})-(\ref{E:THETAICGAMMALEQ5OVER3}) for all $\tau \in [0,T]$. The solution has the property $\mathcal{S}^N(H,H_\tau) \lesssim \epsilon$ for each $\tau \in [0,T]$. Furthermore, the map $[0,T]\ni\tau\mapsto\mathcal{S}^N(\tau)\in\mathbb R_+$ is continuous. 
\end{theorem}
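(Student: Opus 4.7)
The plan is to follow the physical-vacuum local well-posedness framework of Jang-Masmoudi \cite{doi:10.1002/cpa.21517}, modified to account for the coordinate singularity at $r=0$ as in \cite{guo2018continued}. I would first regularize the quasilinear equation (\ref{E:HEQN}) by adding a small parabolic term, for instance $-\kappa L_0 H_\tau$ with $\kappa>0$, producing a nondegenerate problem on the fixed interval $(0,1)$ for each $\kappa$, and then construct solutions by a Picard iteration: given a previous iterate $\tilde H$ with $\mathcal{S}^N(\tilde H)\lesssim\epsilon$, solve the linear equation obtained from (\ref{E:HEQN}) with $\xi,\mathscr{J}$ frozen at the values coming from $\tilde H$ and with the lower order pieces $-H(1+H/r)+r\mathcal{R}_1[H/r]+r\mathcal{R}_2[H/r]$ treated as a source. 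Existence for each linear step follows from a Galerkin scheme in the weighted Hilbert space induced by $\mathcal{S}^N$.

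The analytic heart is a closed a priori estimate of the form
\begin{equation*}
\mathcal{S}^N(\tau)\leq C\,\mathcal{S}^N(0)+C\int_0^\tau P\bigl(\mathcal{S}^N(\tau')\bigr)\,d\tau',
\end{equation*}
uniform in $\kappa$ and in the iteration index, from which Theorem \ref{T:LWP} follows by choosing $T$ and $\epsilon_0$ small enough. For each $0\leq j\leq N$ apply $\mathcal{D}_j$ and each $W\in\mathcal{P}_j\cup\bar{\mathcal{P}}_j$ to (\ref{E:HEQN}); the algebra built into the elliptic operators $L_k,L_k^*$ and their combination $\mathcal{L}_j$ arranges that the principal spatial contribution becomes $-\gamma\xi^4\mathscr{J}^{-\gamma-1}\mathcal{L}_j\mathcal{D}_j H$ modulo commutators. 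Pairing the resulting equation with $d^j r^2(\mathcal{D}_j H)_\tau$ and integrating by parts produces no boundary contribution, because $d(1)=0$ kills the contribution at $r=1$ and the measure $r^2\,dr$ kills that at $r=0$. One then obtains a time derivative of the $j$-th summand of $\mathcal{S}^N$ plus lower-order terms that are polynomial in $\mathcal{S}^N$ by weighted Hardy- and Sobolev-type embeddings in the spaces $\|\cdot\|_k$. The time prefactors $a^{d(\gamma)}$ and $a^{b(\gamma)}$ are precisely what is needed for the friction-like term $a^{3\gamma-4}a_\tau H_\tau$, using $a_\tau/a\to a_1>0$, to be either of good sign or absorbable on the short interval $[0,T]$.

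The main obstacle is not the degeneracy of $d$ at $r=1$, which is fully encoded in the weighted norms $\|\cdot\|_k$ and in the self-adjoint structure of $L_k, L_k^*$, but the coordinate singularity at the origin: blunt iteration of $\partial_r$ would generate uncontrollable $r^{-1}$ factors in $L^2(r^2\,dr)$. The vector fields $\mathcal{D}_j,\mathcal{P}_j,\bar{\mathcal{P}}_j$ from Section \ref{S:NOTATION} are chosen exactly so that each potentially singular $r^{-1}$ combines with a $D_r$ into a smooth, bounded operator, and the cut-off $\psi$ lets us separate an interior region near $r=0$, where these vector fields are used, from an annular region $[1/2,1]$ where ordinary $\partial_r$-estimates suffice. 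Verifying that every commutator $[\mathcal{D}_j,\xi^4\mathscr{J}^{-\gamma-1}L_0]$ together with every term generated by $\mathcal{R}_1[H/r],\mathcal{R}_2[H/r]$ (which are manifestly quadratic in $\uptheta$ and its derivatives) carries enough powers of $d$ and $r$ to fit the weighted norms is the principal bookkeeping.

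Passing $\kappa\to 0$ and sending the iteration index to infinity then uses weak-$*$ lower-semicontinuity of the $\mathcal{S}^N$-norm together with strong compactness at the $\mathcal{S}^{N-1}$ level (Aubin-Lions) to identify the nonlinearities. Uniqueness of the nonlinear solution is obtained from an $\mathcal{S}^0$-level estimate on the difference of two solutions followed by Gronwall, and continuity of $\tau\mapsto\mathcal{S}^N(\tau)$ follows from the energy identity combined with weak continuity of the high-order quantities.
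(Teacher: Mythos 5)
Your proposal is broadly sound but takes a genuinely different route from the paper's Appendix~\ref{A:LWP} sketch.  The paper does not add a parabolic regularization and does not set up a Galerkin scheme.  Instead, it iterates on the differentiated unknown $\mathfrak H := D_r H$ rather than on $H$ itself: given $H_j$, it lets $(\mathfrak H_{j+1},\partial_\tau\mathfrak H_{j+1})$ solve the linear equation that mimics the once-differentiated equation (\ref{E:PREESTIMATEHAPPLYDI}) with $i=1$, with coefficients $\xi,\mathscr J$ and all source terms $\mathcal R_1,\mathcal R_2,\mathcal R_3, C_1[\cdot]$ evaluated at $H_j$, and then recovers $H_{j+1}$ by the inverse of $D_r$, $H_{j+1}(r)=r^{-2}\int_0^r\mathfrak H_{j+1}(r')(r')^2\,dr'$.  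Linear solvability of each iterate is obtained by directly invoking the duality argument of \cite{JaMa2009,doi:10.1002/cpa.21517}, not by Galerkin; the limit $j\to\infty$ is extracted along a subsequence using the uniform bound $\mathcal S^N(H_j,\partial_\tau H_j)\lesssim\epsilon$.  The two routes buy different things: working at the $\mathfrak H=D_r H$ level means the linearized operator is already of $L^*_1$-type, so the weighted, degenerate elliptic structure is in exactly the form to which the Jang--Masmoudi duality machinery applies, and the coordinate singularity at $r=0$ is handled at the level of the iteration scheme, not only in the estimates.  Your plan of iterating directly on $H$ and making the linear problem nondegenerate via $-\kappa L_0 H_\tau$ plus Galerkin is a legitimate alternative (parabolic regularization plus compactness is a standard workaround for the absence of a natural Galerkin basis in $d$-weighted spaces), at the cost of carrying $\kappa$-uniform estimates through the full $\mathcal S^N$ hierarchy and verifying that the regularizing term is compatible with the weight $d^k$ at every order.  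Your remarks on the vanishing of boundary terms (via $d(1)=0$ and $r^2\,dr$), the role of the vector-field classes $\mathcal D_j,\mathcal P_j,\bar{\mathcal P}_j$ near the origin, and the absorbability of the friction term on $[0,T]$ are all consistent with the paper's framework; the paper's sketch simply omits the uniqueness and continuity verifications that you correctly flag as needing an $\mathcal S^0$-level difference estimate and the energy identity.
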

We give the sketch of the proof of Theorem \ref{T:LWP} in Appendix \ref{A:LWP}. \\

\textbf{A priori assumptions.} Finally before our main theorem, make the following a priori assumptions on our local solutions from Theorem \ref{T:LWP}
\begin{align}
\mathcal{S}^N(\tau) &< \frac{1}{3}, \label{E:SNAPRIORI} \\
|\J - 1 | &< \frac{1}{3}, \label{E:JAPRIORI} \\
| \partial_r (\uptheta) | = | \partial_r \left( \frac{H}{r} \right) | &< \frac{1}{3}, \label{E:PARTIALRTHETAAPRIORI}  \\
| \partial_r^2 (\uptheta) | = | \partial_r^2 \left( \frac{H}{r} \right) | &< \frac{1}{3}. \label{E:DOUBLEPARTIALRTHETAAPRIORI}
\end{align}
\begin{remark} 
Using $\J=\xi^2( \xi + \xi_r r)$ and $\uptheta_r = \xi_r$, (\ref{E:JAPRIORI})-(\ref{E:PARTIALRTHETAAPRIORI}) imply
\begin{equation}\label{E:XIBOUNDEDABOVEANDBELOW}
1 \lesssim | \xi | \lesssim 1.
\end{equation}
Furthermore, using $\J_r = ( \xi^3 + \xi^2 \xi_r r)_r $ and $\uptheta_r = \xi_r$, (\ref{E:PARTIALRTHETAAPRIORI})-(\ref{E:XIBOUNDEDABOVEANDBELOW}) imply
\begin{equation}
1 \lesssim | \J_r | \lesssim 1.
\end{equation}
\end{remark}
We are now ready to give our main theorem.

\begin{theorem}\label{T:MAINTHEOREMGAMMALEQ5OVER3}\label{T:MAINTHEOREMGAMMALEQ5OVER3}
Suppose $\gamma >1$. Fix $N \geq 8$. Let $k \geq N$. Consider a fixed quadruple
\begin{equation}
(a(0),a'(0),\phi)\in \mathbb{R}_+ \times \mathbb{R} \times \mathcal Z_k, 
\end{equation} 
parametrizing a nonisentropic affine motion from the set $\mathscr{S}$. 
Then there are $\epsilon_0>0$ and $\lambda_0 > 0$ such that for every $\epsilon \in (0,\epsilon_0]$, $\lambda \in (0,\lambda_0]$ and pair of initial data for (\ref{E:HEQN}) $(H_0,\partial_\tau H_0)$ satisfying $\mathcal{S}^N(H_0,\partial_\tau H_0) \leq \epsilon$ and $\| H(0) \|^2_0 \leq \lambda$, there exists a global-in-time solution, $(H,H_\tau)$, to the initial value problem (\ref{E:HEQN})-(\ref{E:THETAICGAMMALEQ5OVER3}) and a constant $C>0$ such that
\begin{equation}
\mathcal S^N(H,H_\tau)(\tau)  \le C (\varepsilon+\lambda), \ \ 0\le\tau<\infty. \label{E:GLOBALBOUND} 
\end{equation} 
\end{theorem}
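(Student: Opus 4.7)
My plan is a continuation argument driven by a uniform a priori estimate. Theorem \ref{T:LWP} produces a local solution of (\ref{E:HEQN})-(\ref{E:THETAICGAMMALEQ5OVER3}) on some interval $[0,T]$ with $\mathcal{S}^N(\tau) \lesssim \varepsilon$, on which the a priori assumptions (\ref{E:SNAPRIORI})-(\ref{E:DOUBLEPARTIALRTHETAAPRIORI}) hold. What has to be proved is the bound
\[
\mathcal{S}^N(\tau) \le C(\varepsilon+\lambda), \qquad \tau \in [0,T^\ast),
\]
on every interval $[0,T^\ast)$ on which the solution exists and (\ref{E:SNAPRIORI})-(\ref{E:DOUBLEPARTIALRTHETAAPRIORI}) hold, with $C$ independent of $T^\ast$. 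Given this, choosing $\varepsilon_0,\lambda_0$ small enough that $C(\varepsilon+\lambda)<\frac{1}{3}$ and extracting pointwise bounds on $\J$, $\partial_r(H/r)$ and $\partial_r^2(H/r)$ from $\mathcal{S}^N$ via Sobolev embedding (which uses $N\ge 8$) strictly improves (\ref{E:SNAPRIORI})-(\ref{E:DOUBLEPARTIALRTHETAAPRIORI}), and a standard continuation argument based on Theorem \ref{T:LWP} then extends the solution to all $\tau \ge 0$ with (\ref{E:GLOBALBOUND}).

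\textbf{High-order equations.} For each $0 \le i \le N$, I would apply $\mathcal{D}_i$ to (\ref{E:HEQN}). The operators $L_k,L_k^\ast$ in (\ref{E:LK})-(\ref{E:LKSTAR}) are tailored so that the principal elliptic part $-\gamma \frac{\xi^4}{\J^{\gamma+1}\bar{\rho}}\partial_r\bigl(\bar{\rho}^\gamma d\,\tfrac{1}{r^2}\partial_r(r^2\cdot)\bigr) = -\gamma \xi^4 \J^{-\gamma-1} L_0$ commutes through $\mathcal{D}_i$ up to the symmetric operator $\mathcal{L}_i \mathcal{D}_i$ plus lower-order commutators. The symmetry of $\mathcal{L}_i$ with respect to the weighted inner product $\langle f,g\rangle_i = \int_0^1 fg\,d^i r^2 dr$ is what forces the entropy weight $d^i$ in $\|\cdot\|_i$ to escalate with the order $i$, and is precisely the mechanism by which the degenerating entropy takes on the role of the vacuum weight. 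The coordinate singularity at $r=0$ is handled by verifying that the commutators $[\mathcal{D}_i,L_0]$ and the $\mathcal{D}_i$-action on $r\mathcal{R}_1[H/r]$, $r\mathcal{R}_2[H/r]$ and $-H(1+H/r)$ all land inside the span of the vector fields $\mathcal{P}_j$, $\overline{\mathcal{P}}_j$ of (\ref{E:PEVEN})-(\ref{E:PODD}), following the methodology of \cite{guo2018continued}.

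\textbf{Energy estimate and closure.} Testing the $i$-th order equation against $d^i \partial_\tau \mathcal{D}_i H$ and integrating in $r$ produces an identity of the schematic form
\[
\frac{d}{d\tau}\Bigl( a^{d(\gamma)}\|\mathcal{D}_i H_\tau\|_i^2 + \text{(elliptic energy)}\Bigr) + \text{(damping)}_i = \text{commutators} + \text{nonlinear},
\]
where the time exponent $d(\gamma)$ is chosen precisely so that the combination of $\partial_\tau(a^{3\gamma-3})$ coming from integration by parts on $H_{\tau\tau}$ and the damping $a^{3\gamma-4}a_\tau H_\tau$ produces a sign-definite dissipation. For $\gamma \le 5/3$ this is direct with $d(\gamma)=3\gamma-3$ and $b(\gamma)=0$; for $\gamma > 5/3$ the natural damping has the wrong sign, and the compensating choice $d(\gamma)=2$ with weight $a^{b(\gamma)}=a^{5-3\gamma}$ on the top-order elastic energy in (\ref{E:SNNORM}) together with an integration-by-parts-in-$\tau$ trick adapted from \cite{shkoller2017global} is required. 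Using $a(\tau) \sim e^\tau$ (which follows from $a(t)\sim 1+t$ and $d\tau/dt = 1/a$), each commutator and nonlinear remainder decays exponentially in $\tau$, so summing $0 \le i \le N$ and applying Hardy-type inequalities with weight $d$ under the a priori assumptions yields
\[
\mathcal{S}^N(\tau) \le C\bigl(\mathcal{S}^N(0) + \lambda\bigr) + C\int_0^\tau e^{-\sigma\tau'}\bigl(\mathcal{S}^N(\tau')\bigr)^{3/2}\,d\tau',
\]
from which the desired a priori bound follows immediately. The separate assumption $\|H(0)\|_0^2 \le \lambda$ enters here because $\mathcal{S}^N$ does not control $\|H\|_0$ by itself; a simple standalone zeroth-order estimate propagates this quantity and is used to handle the term $-H(1+H/r)$ in (\ref{E:HEQN}).

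\textbf{Main obstacle.} The hardest point is the simultaneous handling of the two degeneracies. At the vacuum boundary, $d(1)=0$ means that every commutator in which $\partial_r$ hits a $d$-free factor produces a term with insufficiently many $d$-weights, and redistributing these via Hardy's inequality in the weight $d$ (without losing derivatives) is delicate; this is compounded by the fact that the quantity $d$ itself is produced by $\bar\rho$ through (\ref{E:EXPLICITFORMFORD}), so Hardy estimates must be formulated carefully in terms of $\bar\rho$. At the origin, verifying that each of the commutator terms arising from the order-$i$ equation can be rewritten entirely in terms of the vector fields $\mathcal{P}_j$, $\overline{\mathcal{P}}_j$ is the most intricate piece of bookkeeping. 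Finally, the $\gamma > 5/3$ regime is the most subtle, since the Shkoller-Sideris integration-by-parts-in-$\tau$ argument must be carried out all the way up to the top order $N$ within the unfamiliar weight structure $d^i$ dictated by the unbounded entropy, and this is where the dual estimate split between $i<N$ and $i=N$ in (\ref{E:SNNORM}) becomes genuinely necessary.
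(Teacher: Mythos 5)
Your proposal follows essentially the same architecture as the paper: local well-posedness by Theorem \ref{T:LWP}, high-order weighted energy estimates via $\mathcal{D}_i$ applied to \eqref{E:HEQN} with the commutator structure borrowed from \cite{guo2018continued}, the $d(\gamma)/b(\gamma)$ time-weight split for the two $\gamma$ regimes, a standalone zeroth-order coercivity estimate to propagate $\|H\|_0$ via $\lambda$, and a continuation argument to extend globally after improving the a priori assumptions \eqref{E:SNAPRIORI}--\eqref{E:DOUBLEPARTIALRTHETAAPRIORI}. The overall plan is sound.

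There are two gaps worth flagging. First, your schematic identity collects the right-hand side into ``commutators $+$ nonlinear'' and then writes the final a priori inequality with the integral term $\int e^{-\sigma\tau'}(\mathcal{S}^N)^{3/2}$. This misses the genuinely \emph{linear} contribution coming from the restoring-force term $\mathcal{D}_i H$ on the right-hand side of the differentiated equation \eqref{E:PREESTIMATEHAPPLYDI}: the term $Z_1^i=\int_0^1 a^{b(\gamma)}\mathcal{D}_i H\,\mathcal{D}_i H_\tau\, r^2 d^i\,dr$ is bounded by $e^{-a_0\tau}\mathcal{S}^N$, not by anything cubic, and is neither a commutator nor a nonlinearity. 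Consequently the actual a priori inequality \eqref{E:ENERGYMAINGAMMALEQ5OVER3} is linear in $\mathcal{S}^N$, with a non-decaying $\kappa\mathcal{S}^N(\tau)$ piece that must be absorbed by choosing $\kappa$ small. The closure then still works (by Gronwall, or by the paper's choice of $\tau^*=T/2$ together with $T\sim(\mathcal{S}^N(0))^{-1}$ from the LWP theory), but the claim that the desired bound ``follows immediately'' from the $(\mathcal{S}^N)^{3/2}$ form bypasses the part of the argument that needs care; you should account for the dominant linear term before asserting closure.

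Second, for $\gamma>5/3$ the norm $\mathcal{S}^N$ carries the weight $a^{b(\gamma)}=a^{5-3\gamma}$ only at the top order $i=N$, while the lower-order elliptic energies in \eqref{E:SNNORM} carry no decaying weight. The equivalence between the modified energy and $\mathcal{S}^N$, and the control of those un-weighted lower-order elastic terms, relies on a separate coercivity lemma (the $\mathcal{C}^{N-1}$ functional of Appendix~\ref{A:COERCIVITY}), which cannot be applied at top order since it would cost an extra derivative of $H_\tau$. You allude to the Shkoller--Sideris time-weight trick but do not mention coercivity; this is an essential ingredient for $\gamma>5/3$ that should be named explicitly.
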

We believe Theorem \ref{T:MAINTHEOREMGAMMALEQ5OVER3} is the first global existence result for the positive density vacuum boundary Euler system.

Henceforth we assume we are working with a unique local solution $(H,H_\tau):[0,1] \rightarrow \mathbb{R} \times \mathbb{R}$ to (\ref{E:HEQN})-(\ref{E:THETAICGAMMALEQ5OVER3}) such that $\mathcal{S}^N(H) < \infty$ on $[0,T]$ with $T > 0$ fixed: Theorem \ref{T:LWP} ensures the existence of such a solution, and furthermore we assume this local solution satisfies the a priori assumptions (\ref{E:SNAPRIORI})-(\ref{E:DOUBLEPARTIALRTHETAAPRIORI}).

To prove our main result, we apply weighted energy estimates. A similar methodology to \cite{guo2018continued} in conjunction with a weighted energy estimate method similar to that used in \cite{rickard2019global} allows us to simultaneously handle the coordinate singularity near the origin, the exponentially growing-in-time coefficients and the vacuum boundary. Firstly the particular choice of derivative operators in combination with the introduction of special vector field classes, see Section \ref{S:NOTATION}, allows us to circumvent the coordinate singularity at $r=0$. Secondly the exponentially growing time weights take advantage of the stabilizing effect of the expanding background affine motion. Finally the increase in spatial weight $d$ in accordance with an increase in derivatives will be essential in avoiding potentially dangerous negative powers of $d$ near the boundary.

It is worth noting that the number of derivatives required to close estimates and prove the main theorem here does not depend on $\gamma$. This is because the weight structure involving $d$ does not depend on $\gamma$. This contrasts to previous free boundary works using weighted estimates with a vanishing density, see \cite{1610.01666,rickard2019global,shkoller2017global} for example.

Coercivity estimates are employed to account for the fact that our equation structure does not include a zeroth order contribution of $H$ which is seen in our definition of $\mathcal{S}^N$. Furthermore, we also use coercivity estimates to obtain results for all $\gamma>1$  because of the time weight manipulation necessary for $\gamma > \frac{5}{3}$, see Section \ref{S:TWM}.

Immediately below in Section \ref{S:ENERGYESTIMATES} we prove our high order energy estimates and in Section \ref{S:PROOFOFMAINTHEOREM} we prove our Main Theorem \ref{T:MAINTHEOREMGAMMALEQ5OVER3} by means of a continuity argument.

\section{Energy Estimates}\label{S:ENERGYESTIMATES}
\subsection{Differentiated Equation}
Let $1 \leq i \leq n$. Apply $\mathcal{D}_i$ to our equation for $H$ (\ref{E:HEQN}) 
\begin{align}
&a^{3 \gamma-3} \mathcal{D}_i H_{\tau \tau} + a^{3 \gamma-4} a_{\tau} \mathcal{D}_i H_\tau - \mathcal{D}_i H -  \mathcal{D}_i \left[\frac{H^2}{r}\right] - \gamma \mathcal{D}_i \left[ \frac{ \xi^4 }{\J^{\gamma+1}  \bar{\rho} } \partial_r  ( \bar{\rho}^\gamma d \tfrac{1}{r^2} \partial_r (r^2 H)) \right] \notag \\
&+ \mathcal{D}_i \left[ r \mathcal{R}_1[\tfrac{H}{r}] + r \mathcal{R}_2[\tfrac{H}{r}]\right]   = 0. \label{E:HEQNAPPLYDI}
\end{align}
Using the product rule for $\mathcal{D}_i$ Lemma \ref{L:PRODUCTRULEDI}, we compute
\begin{align}
& \mathcal{D}_i \left[ \frac{ \xi^4 }{\J^{\gamma+1}  \bar{\rho} } \partial_r  ( \bar{\rho}^\gamma d \tfrac{1}{r^2} \partial_r (r^2 H)) \right] = \mathcal{D}_i \left[ \frac{1}{\bar{\rho}} \partial_r  ( \bar{\rho}^\gamma d \tfrac{1}{r^2} \partial_r (r^2 H)) \right] \frac{\xi^4}{\J^{\gamma+1}} \notag \\
&+\bar{\mathcal{D}}_{i-1} \left[ \partial_r \left( \frac{\xi^4}{\J^{\gamma+1}} \right) \frac{1}{\bar{\rho}} \partial_r ( \bar{\rho}^\gamma d \tfrac{1}{r^2} \partial_r ( r^2 H ) ) \right] \notag \\
&+ \bar{\mathcal{D}}_{i-1} \left[ \frac{\xi^4}{\J^{\gamma+1}} D_r \left( \frac{1}{\bar{\rho}} \partial_r ( \bar{\rho}^\gamma d \tfrac{1}{r^2} \partial_r ( r^2 H ) ) \right) \right] -\frac{\xi^4}{\J^{\gamma+1}} \left( \bar{\mathcal{D}}_{i-1} D_r \left( \frac{1}{\bar{\rho}} \partial_r ( \bar{\rho}^\gamma d \tfrac{1}{r^2} \partial_r ( r^2 H ) ) \right) \right]. \label{E:APPLYPRODUCTRULE}
\end{align}
The last line of (\ref{E:APPLYPRODUCTRULE}) can be written using the commutator as follows
\begin{equation}
[\bar{\mathcal{D}}_{i-1},\frac{\xi^4}{\J^{\gamma+1}}] D_r \left( \frac{1}{\bar{\rho}} \partial_r ( \bar{\rho}^\gamma d \tfrac{1}{r^2} \partial_r ( r^2 H ) ) \right).
\end{equation}
Next we compute $\mathcal{D}_i \left[ \frac{1}{\bar{\rho}} \partial_r  ( \bar{\rho}^\gamma d \tfrac{1}{r^2} \partial_r (r^2 H)) \right]$. To this end, introduce
\begin{align}
L_k f &:=\frac{1}{\bar{\rho}^{1+k(\gamma-1)} d^k} \partial_r \left( \bar{\rho}^{\gamma+k(\gamma-1)} d^{1+k} D_r f \right) \\
L_k^* h &:= \frac{1}{\bar{\rho}^{1+k(\gamma-1)} d^k} D_r \left( \bar{\rho}^{\gamma+k(\gamma-1)} d^{1+k} \partial_r h \right)  \\
 \mathcal L_{j} \mathcal D_j &: = 
\begin{cases}
L_{j} \mathcal D_j & \text{ if $j$ is even}\\
L^\ast_{j}\mathcal D_j & \text{ if $j$ is odd}
\end{cases}. 
\end{align}
Recalling the notation $L_k$ (\ref{E:LK}) and $L_k^*$ (\ref{E:LKSTAR}), first
\begin{align}
& D_r L_k f = L_{1+k}^* D_r f + [ (\gamma+k(\gamma-1)) D_r (\bar{\rho}^{\gamma-2} \bar{\rho}_r d) + (1+k) D_r (d _r \bar{\rho}^{\gamma-1}) ] D_r f \notag \\
&= L_{1+k}^* D_r f + Q_{+} D_r f
\end{align}
where we define
\begin{align}
Q_{+}&:=\tfrac1r ( (2 ( \gamma + k (\gamma-1))\bar{\rho}^{\gamma-2} \bar{\rho}_{r} d + (2 (1+k) \bar{\rho}^{\gamma-1}) d_r) + \notag \\
&+((\gamma+k(\gamma-1))((\gamma-2)\bar{\rho}^{\gamma-3} \bar{\rho}_r^2 + \bar{\rho}^{\gamma-2} \bar{\rho}_{r r})d +(2+2k) (\gamma-1) \bar{\rho}^{\gamma-2}\bar{\rho}_r d_r + (1+k) \bar{\rho}^{\gamma-1} d_{r r}.
\end{align}
Also
\begin{align}
& \partial_r L_k^* h = L_{1+k} \partial_r h + \left[ \partial_r \left( \frac{\partial_r (\bar{\rho}^{\gamma+k(\gamma-1)} d^{1+k})}{\bar{\rho}^{1+k(\gamma-1)} d^k} \right) - \frac{2}{r} \frac{\partial_r (\bar{\rho}^{\gamma+k(\gamma-1)} d^{1+k})}{\bar{\rho}^{1+k(\gamma-1)} d^k} \right] \partial_r h \notag \\
&= L_{1+k} \partial_r h + Q_{-} \partial_r h.
\end{align}
where we define
\begin{align}
Q_{-}&:=(\gamma+k(\gamma-1))((\gamma-2) \bar{\rho}^{\gamma-3} \bar{\rho}_r^2 d + \bar{\rho}^{\gamma-2}\bar{\rho}_{rr} d + \bar{\rho}^{\gamma-2} \bar{\rho}_r d_r ) + \notag \\
&+(1+k) (d_{rr} \bar{\rho}^{\gamma-1} + (\gamma-1)\bar{\rho}^{\gamma-2} \bar{\rho}_{r} d_r ) - \frac{2}{r}((\gamma+k(\gamma-1)) \bar{\rho}^{\gamma-2} \bar{\rho}_r d + (1+k) \bar{\rho}^{\gamma-1} d_r ).
\end{align}
Then using the commutation rule for $\D_i L_0$ Lemma \ref{L:DIL0COMMUTATIONRESULT}
\begin{align}
&\mathcal{D}_i \left[ \frac{1}{\bar{\rho}} \partial_r  ( \bar{\rho}^\gamma d \tfrac{1}{r^2} \partial_r (r^2 H)) \right] = \mathcal{D}_i L_0 H =\mathcal{L}_i \mathcal{D}_i H + \sum_{j=0}^{i-1} q_{ij} \mathcal{D}_{i-j} H,
\end{align}
where
\begin{equation}
q_{ij}=\sum_{k=1}^{2+j} \frac{ \sum_{\ell =0}^k c_{i j k \ell} \partial_r^\ell d }{r^{2+j-k}},
\end{equation}
and $c_{i j k \ell}$ are bounded functions on $[0,1]$.
Now returning to (\ref{E:HEQNAPPLYDI}), we have
\begin{align}
&a^{3 \gamma-3} \mathcal{D}_i H_{\tau \tau} + a^{3 \gamma-4} a_{\tau} \mathcal{D}_i H_\tau -  \mathcal{D}_i H - \mathcal{D}_i \left[\frac{H^2}{r}\right] - \gamma \frac{\xi^4}{\J^{\gamma+1}} \mathcal{L}_i \mathcal{D}_i H \notag \\
&- \gamma \frac{\xi^4}{\J^{\gamma+1}} \sum_{j=0}^{i-1} q_{ij} \mathcal{D}_{i-j} H - \gamma \bar{\mathcal{D}}_{i-1} \left( \partial_r \left( \frac{\xi^4}{\J^{\gamma+1}} \right) L_0 H \right) -\gamma [\bar{\mathcal{D}}_{i-1},\frac{\xi^4}{\J^{\gamma+1}}] D_r L_0 H  \notag \\
&+ \mathcal{D}_i \left[ r \mathcal{R}_1[\tfrac{H}{r}] + r \mathcal{R}_2[\tfrac{H}{r}]  \right]  = 0. \label{E:RETURNTOHEQNAPPLYDI}
\end{align}
Let
\begin{equation}
C_i[H]:= \frac{\xi^4}{\J^{\gamma+1}} \sum_{j=0}^{i-1} q_{ij} \mathcal{D}_{i-j} H + [\bar{\mathcal{D}}_{i-1},\frac{\xi^4}{\J^{\gamma+1}}] D_r L_0 H.
\end{equation}
Then we write (\ref{E:RETURNTOHEQNAPPLYDI}) as follows 
\begin{align}
&a^{3 \gamma-3} \mathcal{D}_i H_{\tau \tau} + a^{3 \gamma-4} a_{\tau} \mathcal{D}_i H_\tau  - \gamma \frac{\xi^4}{\J^{\gamma+1}} \mathcal{L}_i \mathcal{D}_i H \notag \\
&= \mathcal{D}_i H +  \mathcal{D}_i \left[\frac{H^2}{r}\right] - \mathcal{D}_i \left[ r \mathcal{R}_1[\tfrac{H}{r}] + r \mathcal{R}_2[\tfrac{H}{r}]  \right] + \gamma C_i[H] + \gamma \bar{\mathcal{D}}_{i-1} \left( \partial_r \left( \frac{\xi^4}{\J^{\gamma+1}} \right) L_0 H \right). \label{E:PREESTIMATEHAPPLYDI}
\end{align}

\subsection{Time Weight Manipulation}\label{S:TWM}
For $\gamma > \frac53$ we need to eliminate our equivalent of the anti-damping effect encountered in \cite{1610.01666}. We use the strategy from \cite{shkoller2017global}, applied to our spherically symmetric positive density nonisentropic setting. First recall our $\gamma$ dependent exponents
\begin{equation}
d(\gamma)=\begin{cases}
3\gamma - 3 & \text{ if } \  1<\gamma\leq \frac53 \\
2 & \text{ if } \  \gamma>\frac53
\end{cases}; \quad b(\gamma)=d(\gamma)+3-3\gamma=\begin{cases}
0 & \text{ if } \  1<\gamma\leq \frac53 \\
5-3 \gamma & \text{ if } \  \gamma>\frac53
\end{cases}.
\end{equation} 
We then multiply (\ref{E:PREESTIMATEHAPPLYDI}) by $a^{b(\gamma)}$ to obtain
\begin{align}
&a^{d(\gamma)} \mathcal{D}_i H_{\tau \tau} + a^{d(\gamma)-1} a_{\tau} \mathcal{D}_i H_\tau  - \gamma a^{b(\gamma)} \frac{\xi^4}{\J^{\gamma+1}} \mathcal{L}_i \mathcal{D}_i H \notag \\
&=  a^{b(\gamma)} \mathcal{D}_i H +  a^{b(\gamma)} \mathcal{D}_i \left[\frac{H^2}{r}\right] - a^{b(\gamma)} \mathcal{D}_i \left[ r \mathcal{R}_1[\tfrac{H}{r}] + r \mathcal{R}_2[\tfrac{H}{r}]  \right] + \gamma a^{b(\gamma)} C_i[H] \notag \\
&\quad \quad \quad + \gamma a^{b(\gamma)} \bar{\mathcal{D}}_{i-1} \left( \partial_r \left( \frac{\xi^4}{\J^{\gamma+1}} \right) L_0 H \right). \label{E:PREESTIMATEHAPPLYDIPOSTGAMMADIFF}
\end{align}

\subsection{Energy Identity}
Multiply (\ref{E:PREESTIMATEHAPPLYDIPOSTGAMMADIFF}) by $r^2 d^i \mathcal{D}_i H_\tau$ and integrate in $r$ from $0$ to $1$
\begin{align}
& \int_0^1 a^{d(\gamma)} \mathcal{D}_i H_{\tau \tau} \mathcal{D}_i H_\tau d^i r^2 \, dr + \int_0^1 a^{d(\gamma)-1} a_{\tau} (\mathcal{D}_i H_{\tau})^2 d^i r^2 \, dr \notag \\
& -\gamma a^{b(\gamma)} \begin{cases}
\int_0^1 \frac{\xi^4}{\bar{\rho}^{1+k(\gamma-1)} \J^{\gamma+1}} \partial_r \left( \bar{\rho}^{\gamma+i(\gamma-1)} d^{1+i} D_r \mathcal{D}_i H  \right) r^2 \mathcal{D}_i H_\tau & \text{ if $i$ is even}\\
\int_0^1 \frac{\xi^4}{\bar{\rho}^{1+k(\gamma-1)} \J^{\gamma+1}} D_r \left( \bar{\rho}^{\gamma+i(\gamma-1)} d^{1+i} \partial_r \mathcal{D}_i H  \right) r^2 \mathcal{D}_i H_\tau & \text{ if $i$ is odd}
\end{cases} \notag \\
&=  \int_0^1 a^{b(\gamma)} \mathcal{D}_i H \mathcal{D}_i H_\tau r^2 d^i \, dr +  \int_0^1 a^{b(\gamma)} \mathcal{D}_i \left[\frac{H^2}{r} \right] \mathcal{D}_i H_\tau r^2 d^i \, dr \notag \\
&- \int_0^1 a^{b(\gamma)} \mathcal{D}_i \left[ r \mathcal{R}_1[\tfrac{H}{r}] + r \mathcal{R}_2[\tfrac{H}{r}]  \right] \mathcal{D}_i H_\tau r^2 d^i \, dr \notag \\
&+ \gamma \int_0^1 a^{b(\gamma)} C_i[H] r^2 d^i \mathcal{D}_i H_\tau \, dr + \gamma \int_0^1 a^{b(\gamma)} \bar{\mathcal{D}}_{i-1} \left( \partial_r \left( \frac{\xi^4}{\J^{\gamma+1}} \right) L_0 H \right) r^2 d^i \mathcal{D}_i \mathcal{H}_\tau \, dr. \label{E:POSTMULTIPLYDIHTAU}
\end{align}
We rewrite the third term on the left hand side of (\ref{E:POSTMULTIPLYDIHTAU}) and obtain
\begin{align}
& \int_0^1 a^{d(\gamma)} \mathcal{D}_i H_{\tau \tau} \mathcal{D}_i H_\tau d^i r^2 \, dr + \int_0^1 a^{d(\gamma)-1} a_{\tau} (\mathcal{D}_i H_{\tau})^2 d^i r^2 \, dr \notag \\
& + \gamma \int_0^1 a^{b(\gamma)} \frac{\xi^4}{\J^{\gamma+1}} \mathcal{D}_{i+1} H \mathcal{D}_{i+1} H_{\tau} r^2 \bar{\rho}^{\gamma-1} d^{i+1} \, dr \notag \\
&=  \int_0^1 a^{b(\gamma)} \mathcal{D}_i H \mathcal{D}_i H_\tau r^2 d^i \, dr + \int_0^1 a^{b(\gamma)} \mathcal{D}_i \left[\frac{H^2}{r} \right] \mathcal{D}_i H_\tau r^2 d^i \, dr \notag \\
&-\gamma \int_0^1 a^{b(\gamma)} \bar{\rho}^{\gamma+i(\gamma-1)} d^{1+i}  \mathcal{D}_{i+1} H \mathcal{D}_{i} H_\tau r^2 \partial_r \left( \frac{\xi^4}{\bar{\rho}^{1+i(\gamma-1)} \J^{\gamma+1}} \right) \, dr \notag \\
&- \int_0^1 a^{b(\gamma)} \mathcal{D}_i \left[ r \mathcal{R}_1[\tfrac{H}{r}] + r \mathcal{R}_2[\tfrac{H}{r}]\right] \mathcal{D}_i H_\tau r^2 d^i \, dr \notag \\
&+ \gamma \int_0^1 a^{b(\gamma)} C_i[H] r^2 d^i \mathcal{D}_i H_\tau \, dr + \gamma \int_0^1 a^{b(\gamma)} \bar{\mathcal{D}}_{i-1} \left( \partial_r \left( \frac{\xi^4}{\J^{\gamma+1}} \right) L_0 H \right) r^2 d^i \mathcal{D}_i \mathcal{H}_\tau \, dr. \label{E:POSTREWRITETHIRDTERM}
\end{align}
Writing (\ref{E:POSTREWRITETHIRDTERM}) using perfect time derivatives
\begin{align}
&\frac{d}{d \tau} \left( \frac{1}{2} a^{d(\gamma)} \int_0^1 ( \mathcal{D}_i H_\tau)^2 d^i r^2 \, dr + \frac{\gamma a^{b(\gamma)}}{2}  \int_0^1 \frac{\xi^4}{\J^{\gamma+1}} (\mathcal{D}_{i+1} H)^2 \bar{\rho}^{\gamma-1} d^{i+1} r^2 \, dr \right) \notag \\
&+\frac{2-d(\gamma)}{2} a^{d(\gamma)-1} a_{\tau} \int_0^1 (\mathcal{D}_i H_\tau)^2 d^i r^2 \, dr - \frac{\gamma b(\gamma)}{2} a^{b(\gamma)-1} a_{\tau} \int_0^1 \frac{\xi^4}{\J^{\gamma+1}} (\mathcal{D}_{i+1} H)^2 \bar{\rho}^{\gamma-1} d^{i+1} r^2 \, dr \notag \\
&= a^{b(\gamma)} \int_0^1 \mathcal{D}_i H \mathcal{D}_i H_\tau r^2 d^i \, dr +  a^{b(\gamma)} \int_0^1 \mathcal{D}_i \left[\frac{H^2}{r} \right] \mathcal{D}_i H_\tau r^2 d^i \, dr \notag \\ 
&+  \frac{\gamma a^{b(\gamma)}}{2}  \int_0^1 \partial_\tau \left(\frac{\xi^4}{\J^{\gamma+1}}\right) (\mathcal{D}_{i+1} H)^2 \bar{\rho}^{\gamma-1} d^{i+1} r^2 \, dr \notag \\
&-\gamma a^{b(\gamma)} \int_0^1 \bar{\rho}^{\gamma+i(\gamma-1)} d^{i+1}  \mathcal{D}_{i+1} H \mathcal{D}_{i} H_\tau r^2 \partial_r \left( \frac{\xi^4}{\bar{\rho}^{1+i(\gamma-1)} \J^{\gamma+1}} \right) \, dr \notag \\
&- \int_0^1 a^{b(\gamma)} \mathcal{D}_i \left[ r \mathcal{R}_1[\tfrac{H}{r}] + r \mathcal{R}_2[\tfrac{H}{r}]  \right] \mathcal{D}_i H_\tau r^2 d^i \, dr \notag \\
&+ \gamma \int_0^1 a^{b(\gamma)} C_i[H] r^2 d^i \mathcal{D}_i H_\tau \, dr + \gamma \int_0^1 a^{b(\gamma)} \bar{\mathcal{D}}_{i-1} \left( \partial_r \left( \frac{\xi^4}{\J^{\gamma+1}} \right) L_0 H \right) r^2 d^i \mathcal{D}_i \mathcal{H}_\tau \, dr. \label{E:ENERGYIDENTITY}
\end{align}
\begin{remark}
The second line of the left hand side of (\ref{E:ENERGYIDENTITY}) is nonnegative for both $\gamma \in (1,\tfrac{5}{3}]$ and $\gamma > \frac{5}{3}$ where we note $2-d(\gamma)=0$ for $\gamma > \frac{5}{3}$ and $b(\gamma)=0$ for $\gamma \in (1,\tfrac{5}{3}]$.
\end{remark}

\subsection{High-order Quantities from Energy Identity}
We now define our high order quantities. First from (\ref{E:ENERGYIDENTITY}), define
\begin{align}
\mathcal{E}_i&= \frac{1}{2} a^{d(\gamma)} \int_0^1 ( \mathcal{D}_i H_\tau)^2 d^i r^2 \, dr + \frac{\gamma a^{b(\gamma)}}{2}  \int_0^1 \frac{\xi^4}{\J^{\gamma+1}} (\mathcal{D}_{i+1} H)^2 \bar{\rho}^{\gamma-1} d^{i+1} r^2 \, dr \notag \\
\mathscr{D}_i&=\frac{2-d(\gamma)}{2} a^{d(\gamma)-1} a_{\tau} \int_0^1 (\mathcal{D}_i H_\tau)^2 d^i r^2 \, dr - \frac{\gamma b(\gamma)}{2} a^{b(\gamma)-1} a_{\tau} \int_0^1 \frac{\xi^4}{\J^{\gamma+1}} (\mathcal{D}_{i+1} H)^2 \bar{\rho}^{\gamma-1} d^{i+1} r^2 \, dr \notag \\
R_i &=  \int_0^1 a^{b(\gamma)} \mathcal{D}_i H \mathcal{D}_i H_\tau r^2 d^i \, dr +  \int_0^1 a^{b(\gamma)} \mathcal{D}_i \left[\frac{H^2}{r} \right] \mathcal{D}_i H_\tau r^2 d^i \, dr \notag \\ 
&+ \frac{\gamma}{2}  \int_0^1 a^{b(\gamma)} \partial_\tau \left(\frac{\xi^4}{\J^{\gamma+1}}\right) (\mathcal{D}_{i+1} H)^2 \bar{\rho}^{\gamma-1} d^{i+1} r^2 \, dr \notag \\
&-\gamma \int_0^1 a^{b(\gamma)} \bar{\rho}^{\gamma+i(\gamma-1)} d^{1+i}  \mathcal{D}_{i+1} H \mathcal{D}_{i} H_\tau r^2 \partial_r \left( \frac{\xi^4}{\bar{\rho}^{1+i(\gamma-1)} \J^{\gamma+1}} \right) \, dr \notag \\
&- \int_0^1 a^{b(\gamma)} \mathcal{D}_i \left[ r \mathcal{R}_1[\tfrac{H}{r}] + r \mathcal{R}_2[\tfrac{H}{r}] \right] \mathcal{D}_i H_\tau r^2 d^i \, dr \notag \\
&+ \gamma \int_0^1 a^{b(\gamma)} C_i[H] r^2 d^i \mathcal{D}_i H_\tau \, dr + \gamma \int_0^1 a^{b(\gamma)} \bar{\mathcal{D}}_{i-1} \left( \partial_r \left( \frac{\xi^4}{\J^{\gamma+1}} \right) L_0 H \right) r^2 d^i \mathcal{D}_i \mathcal{H}_\tau \, dr.
\end{align}
Then let
\begin{align}
\mathcal{E}^N (\tau) & = \sum_{i=0}^N \mathcal{E}_i(\tau) \\
\mathscr{D}^N (\tau) & = \sum_{i=0}^N \mathscr{D}_i(\tau)  \\
R_i &= Z_1^i + Z_2^i +Z_3^i +Z_4^i +Z_5^i +Z_6^i +Z_7^i.
\end{align}
where we will show in the zeroth order estimate $Z_6^0 =Z_7^0 = 0$. Therefore from (\ref{E:ENERGYIDENTITY}), we have
\begin{align}\label{E:ENERGYIDENTITYINTERMSOFHOQ}
\frac{d}{d \tau} ( \mathcal{E}^N (\tau) ) + \mathscr{D}^N (\tau) &= \sum_{i=0}^N R_i \notag \\
& =  \sum_{i=0}^N (Z_1^i + Z_2^i +Z_3^i +Z_4^i +Z_5^i +Z_6^i +Z_7^i).
\end{align}\label{E:ENERGYIDENTITYINTERMSOFHIGHORDERQUANTITIES}

\subsection{Main Energy Inequality}
To establish our central energy inequality, we primarily need to estimate the right hand of (\ref{E:ENERGYIDENTITYINTERMSOFHIGHORDERQUANTITIES}), that is, estimate $Z_j^i$ for all $j=1,2,3,4,5,6,7$ and all $0 \leq i \leq N$. 

Before this, we introduce a term only present for $\gamma>\frac{5}{3}$ which is similar to $\mathcal{E}_i$ but does not include top order quantities or time weights with negative powers, and will be controlled through our coercivity Lemma \ref{L:COERCIVITY}
\begin{equation}
\mathcal{C}_{i,\gamma}=\mathbf{1}_{ \gamma > 5/3 } \int_0^1 \frac{\gamma\,\xi^4}{2 \J^{\gamma+1}} (\mathcal{D}_{i+1} H)^2 \bar{\rho}^{\gamma-1} d^{i+1} r^2 \, dr,
\end{equation}
where $\mathbf{1}_{ \gamma > 5/3 }=\begin{cases}
1 & \text{ if } \gamma > \frac53 \\
0 & \text{ if } 1 < \gamma  \leq \frac53
\end{cases}$. Then let 
\begin{equation}
\mathcal{C}^{N-1} (\tau)  = \sum_{i=0}^{N-1} \mathcal{C}_{i,\gamma}(\tau).
\end{equation}
\begin{remark}
Our coercivity Lemma \ref{L:COERCIVITY} given in Appendix \ref{A:COERCIVITY} will let us control $\mathcal{C}^{N-1}$. However we cannot use Lemma \ref{L:COERCIVITY} to handle top order terms as that would require control of $N+1$ derivatives of $\partial_\tau H$. This is why we do not include top order terms in $\mathcal{C}^{N-1}$.
\end{remark}
Finally, prior to proving our main energy inequality, it is worth formally stating the equivalence of our high order norm $\mathcal{S}^N$ and high order energy functional $\mathcal{E}^N$.
\begin{lemma}\label{L:NORMENERGYGAMMAGREATER5OVER3} 
Let $(H, H_\tau):[0,1] \rightarrow \mathbb R \times \mathbb R$ be a unique local solution to (\ref{E:HEQN})-(\ref{E:THETAICGAMMALEQ5OVER3}) on $[0,T]$ for $T>0$ fixed with $\|H(0)\|_0^2 < \infty$ and assume $(H, H_\tau)$ satisfies the a priori assumptions (\ref{E:SNAPRIORI})-(\ref{E:DOUBLEPARTIALRTHETAAPRIORI}). Fix $N \geq 8$. Let $k \geq N$ in (\ref{E:PHIDEMAND}). Then there are constants $C_1,C_2>0$ so that
\begin{align}\label{E:NORMENERGYEQUIVALENCE}
C_1\mathcal{S}^N (\tau) \le \sup_{0\le\tau'\le\tau}\{\mathcal{E}^N(\tau')+\mathcal{C}^{N-1}(\tau')\} \le C_2(\mathcal{S}^N(\tau)+\mathcal{S}^N(0)).
\end{align} 
\end{lemma}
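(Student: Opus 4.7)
The plan is to establish the equivalence by direct termwise comparison at each fixed time $\tau' \in [0,\tau]$, exploiting the a priori assumptions to treat $\xi$, $\mathcal J$, and $\bar\rho^{\gamma-1}$ as uniformly positive bounded coefficients, and then taking suprema. No integration in time or differential inequality is needed; the lemma is purely algebraic once the coefficient bounds are in hand.

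First I would record the coefficient estimates. By (\ref{E:JAPRIORI}), (\ref{E:PARTIALRTHETAAPRIORI}) and the remark containing (\ref{E:XIBOUNDEDABOVEANDBELOW}), both $\xi$ and $\mathcal J$ are bounded above and bounded away from zero on $[0,1]\times[0,T]$. Since $\bar\rho=\phi\in\mathcal Z_k$ with $\phi>0$ on the compact interval $[0,1]$, the factor $\bar\rho^{\gamma-1}$ has a positive lower bound and finite upper bound, so there exist $0<c_*\le C^*$ with $c_*\le \xi^4\bar\rho^{\gamma-1}/\mathcal J^{\gamma+1}\le C^*$ on $[0,1]\times[0,T]$. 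Similarly $1\lesssim a(\tau')\lesssim a(\tau)$ for $\tau'\le\tau$ since $a$ is positive and nondecreasing (in fact $a(\tau)\sim e^{\tau}$ in Lagrangian time), so $a(\tau')^{b(\gamma)}$ is bounded above by a constant depending only on $a(0)$ whenever $b(\gamma)<0$.

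Second I would prove the right-hand inequality by comparing $\mathcal E_i(\tau')+\mathcal C_{i,\gamma}(\tau')$ term-by-term to the expression inside the supremum defining $\mathcal S^N$. The velocity piece $\tfrac12 a^{d(\gamma)}\|\mathcal D_iH_\tau\|_i^2$ is already one of the summands of $\mathcal S^N$. The spatial piece of $\mathcal E_i$ is controlled by $C^*\,a(\tau')^{b(\gamma)}\|\mathcal D_{i+1}H\|_{i+1}^2$; for $i=N$ this is exactly the top-order spatial summand of $\mathcal S^N$, while for $i<N$ either $b(\gamma)=0$ (when $\gamma\le 5/3$), giving the unweighted spatial summand of $\mathcal S^N$ directly, or $b(\gamma)<0$ (when $\gamma>5/3$), in which case $a(\tau')^{b(\gamma)}\le a(0)^{b(\gamma)}$ is a constant and the unweighted $\mathcal S^N$-summand again suffices. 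The $\mathcal C_{i,\gamma}$ contribution is dominated by $C^*\|\mathcal D_{i+1}H\|_{i+1}^2$ for $0\le i\le N-1$, which is a summand of $\mathcal S^N$. Summing over $i$ and taking $\sup_{\tau'\le\tau}$ yields $\sup\{\mathcal E^N+\mathcal C^{N-1}\}\lesssim\mathcal S^N(\tau)$, and the stated bound $C_2(\mathcal S^N(\tau)+\mathcal S^N(0))$ follows a fortiori.

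Third I would prove the left-hand inequality by the dual pointwise bound on the $\mathcal S^N$ integrand. The velocity term $a^{d(\gamma)}\|\mathcal D_iH_\tau\|_i^2 \le 2\mathcal E_i(\tau')$. For the top-order spatial term $a^{b(\gamma)}\|\mathcal D_{N+1}H\|_{N+1}^2$, the lower coefficient bound $c_*$ converts this into $\tfrac{2}{\gamma c_*}\mathcal E_N(\tau')$. For the lower-order spatial terms $\|\mathcal D_{i+1}H\|_{i+1}^2$, $0\le i\le N-1$: when $\gamma\le 5/3$ the time weight $a^{b(\gamma)}=1$ and the lower bound $c_*$ applied inside $\mathcal E_i(\tau')$ gives direct control; when $\gamma>5/3$ the same lower bound applied inside $\mathcal C_{i,\gamma}(\tau')$ provides the control — and this is exactly why $\mathcal C^{N-1}$ was introduced. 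Taking the supremum over $\tau'\le\tau$ then produces $\mathcal S^N(\tau)$ on the left.

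The only subtlety is tracking the time weight $a^{b(\gamma)}$ in the $\gamma>5/3$ regime, where $b(\gamma)<0$ causes $\mathcal E^N$ alone to underweight the lower-order spatial pieces of $\mathcal S^N$; the auxiliary quantity $\mathcal C^{N-1}$ is tailored to plug precisely this gap and recover norm equivalence uniformly in $\tau$. The slack $\mathcal S^N(0)$ on the right-hand side is harmless because $\tau'\mapsto\mathcal S^N(\tau')$ is nondecreasing by construction, so $\mathcal S^N(0)\le\mathcal S^N(\tau)$; it is merely included so that constants arising from $a(0)$ can be absorbed without further bookkeeping.
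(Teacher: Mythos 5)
Your argument is correct and matches the paper's brief proof: a termwise comparison of the summands of $\mathcal{S}^N$ with $\mathcal{E}^N+\mathcal{C}^{N-1}$ using the a priori uniform bounds on $\J$, $\xi$ and $\bar\rho$, with $\mathcal{C}^{N-1}$ supplying the lower-order spatial control that $\mathcal{E}^N$ alone loses when $b(\gamma)<0$. The only difference is that the paper additionally cites Lemma~\ref{L:COERCIVITY} to bound the $\mathcal{C}^{N-1}$ contribution by $\mathcal{S}^N(\tau)+\mathcal{S}^N(0)$, while you correctly observe that this is not strictly required here, since the middle sum $\sum_{i=0}^{N-1}\|\mathcal{D}_{i+1}H\|_{i+1}^2$ in the definition~\eqref{E:SNNORM} already dominates $\mathcal{C}^{N-1}(\tau')$ pointwise in $\tau'\le\tau$, making the $\mathcal{S}^N(0)$ slack on the right harmless.
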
 
\begin{proof}
Recall the definition $\mathcal{S}^N$ (\ref{E:SNNORM}). Then (\ref{E:NORMENERGYEQUIVALENCE}) is a straightforward application of the uniform boundedness of $\J$ (\ref{E:JAPRIORI}), $\xi$ (\ref{E:XIBOUNDEDABOVEANDBELOW}) and $\bar{\rho}$ (\ref{E:BARRHODEMAND}), in conjunction with Lemma \ref{L:COERCIVITY} to control terms without time weights with negative powers, which are included in $\mathcal{C}^{N-1}(\tau')$, by $\mathcal{S}^N(\tau)+\mathcal{S}^N(0)$.
\end{proof}
We are now ready to prove our central high order energy inequality which will be essential in the proof of our main result Theorem \ref{T:MAINTHEOREMGAMMALEQ5OVER3}. 
\begin{proposition}\label{P:ENERGYESTIMATEGAMMALEQ5OVER3}
Suppose $\gamma >1$. Let $(H, H_\tau):[0,1] \rightarrow \mathbb R \times \mathbb R$ be a unique local solution to (\ref{E:HEQN})-(\ref{E:THETAICGAMMALEQ5OVER3}) on $[0,T]$ for $T>0$ fixed with $\|H(0)\|_0^2 < \infty$ and assume $(H, H_\tau)$ satisfies the a priori assumptions (\ref{E:SNAPRIORI})-(\ref{E:DOUBLEPARTIALRTHETAAPRIORI}). Fix $N \geq 8$. Let $k \geq N$ in (\ref{E:PHIDEMAND}). Then for all $\tau \in [0,T]$, we have the following inequality for some $0<\kappa\ll 1$
\begin{equation}
\mathcal E^N(\tau) +\mathcal{C}^{N-1}(\tau)+\int_0^\tau \mathscr D^N(\tau')\,d\tau' \lesssim  \mathcal{S}^N(0) +  \| H(0) \|_0^2 + \kappa\mathcal{S}^N(\tau) + \int_0^\tau e^{-a_0\tau'} \mathcal{S}^N(\tau') d\tau'. \label{E:ENERGYMAINGAMMALEQ5OVER3}
\end{equation}
\end{proposition}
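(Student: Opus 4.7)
The plan is to integrate the energy identity \eqref{E:ENERGYIDENTITYINTERMSOFHOQ} over $[0,\tau]$, sum over $0\le i\le N$, add in the coercivity contribution $\mathcal C^{N-1}(\tau)$, and then control each of the seven families of source terms $Z_1^i,\dots,Z_7^i$ separately. The guiding principle is that every time an estimate produces a prefactor of the form $a(\tau')^{\alpha}$ with $\alpha\le -d(\gamma)/2\cdot\text{(sign consistent with }a_0\text{)}$, the asymptotics $a(\tau')\sim e^{a_1\tau'}$ and the identity $a_0=\tfrac{d(\gamma)}{2}a_1$ convert the $\tau'$-integral into the desired $\int_0^\tau e^{-a_0\tau'}\mathcal S^N(\tau')\,d\tau'$, while any top-order factor that cannot be made to decay will be absorbed into $\kappa\mathcal S^N(\tau)$ via Young's inequality.

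Concretely, I would proceed as follows. First, I would note the equivalence \eqref{E:NORMENERGYEQUIVALENCE} from Lemma \ref{L:NORMENERGYGAMMAGREATER5OVER3} so that $\mathcal E^N(\tau)+\mathcal C^{N-1}(\tau)$ can be replaced throughout by an expression comparable to $\mathcal S^N(\tau)$, picking up $\mathcal S^N(0)$ on the right. Then integrating \eqref{E:ENERGYIDENTITYINTERMSOFHIGHORDERQUANTITIES} yields
\[
\mathcal E^N(\tau)+\int_0^\tau\mathscr D^N(\tau')\,d\tau'=\mathcal E^N(0)+\sum_{i=0}^N\int_0^\tau R_i(\tau')\,d\tau',
\]
so the game reduces to bounding the seven types of terms inside each $R_i$. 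For $Z_1^i$ and $Z_2^i$ I would apply Cauchy--Schwarz and the equivalence of $\mathcal E^N$ with $\mathcal S^N$: at intermediate orders $\|\mathcal D_i H\|_i\lesssim\sqrt{\mathcal S^N}$, at top order $\|\mathcal D_N H\|_N\lesssim\sqrt{\mathcal S^N/a^{b(\gamma)}}$, and in both cases $\|\mathcal D_i H_\tau\|_i\lesssim\sqrt{\mathcal S^N/a^{d(\gamma)}}$. The resulting power of $a$ is $b(\gamma)-d(\gamma)/2$ (intermediate $i$) or $b(\gamma)/2-d(\gamma)/2$ (top $i=N$); a short case split on $\gamma\lessgtr 5/3$ shows that in every case the exponent is $\le-a_0/a_1$, producing the required $e^{-a_0\tau'}$. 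The quadratic nonlinearity $H^2/r$ in $Z_2^i$ is handled by rewriting $H/r=\uptheta$, expanding with the product rule, and using the a priori bounds \eqref{E:PARTIALRTHETAAPRIORI}--\eqref{E:DOUBLEPARTIALRTHETAAPRIORI} together with Sobolev-type embeddings to pull an $L^\infty$ norm of a low-order factor out of the integral, trading one factor of $\sqrt{\mathcal S^N}$ (with $\mathcal S^N\le 1/3$) into a small constant.

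For $Z_3^i$ and $Z_4^i$, the crucial observation is that $\partial_\tau(\xi^4/\J^{\gamma+1})$ and $\partial_r(\xi^4/(\bar\rho^{1+i(\gamma-1)}\J^{\gamma+1}))$ factor into smooth bounded functions of $(\xi,\J)$ times either $\uptheta_\tau$ and $\partial_r\uptheta_\tau$, or $\partial_r\uptheta$ and $\partial_r^2\uptheta$; the a priori assumptions give uniform boundedness of the smooth factors and the $L^2$-in-space norms of the derivative factors are directly controlled by $\mathcal S^N$, which after Cauchy--Schwarz with $\mathcal D_{i+1}H$ or $\mathcal D_i H_\tau$ produces the same $a$-powers as in $Z_1^i$ and is routed to the $e^{-a_0\tau'}$ term in exactly the same way. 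The nonlinear remainders in $Z_5^i$ expand via \eqref{E:JEXPAND} into finite sums whose lowest factor in $\uptheta$ is quadratic; applying $\mathcal D_i$ together with the product rule (and the structure of $\mathcal P_j$, $\overline{\mathcal P}_j$ from Section \ref{S:NOTATION}) distributes at most $i\le N$ derivatives on the higher-order factor and keeps the other factor at low order, so that again an $L^\infty$ control via Sobolev embedding turns the bound into $\sqrt{\mathcal S^N}\cdot(\mathcal S^N/a^{d(\gamma)})$, giving a decaying time integrand. The commutator term $Z_6^i$ is controlled by the product-rule Lemma \ref{L:PRODUCTRULEDI} and the commutation result Lemma \ref{L:DIL0COMMUTATIONRESULT}: they produce at most $i$ derivatives of $H$ multiplied by bounded functions of $r,\bar\rho,d$, together with explicit negative powers of $r$ that are matched by the weights arising from the $D_r$ factors, leaving everything controlled by $\sqrt{\mathcal S^N}\cdot\|\mathcal D_i H_\tau\|_i$. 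Finally, $Z_7^i$ is of the same structural type as $Z_4^i$ with an extra derivative distributed via $\bar{\mathcal D}_{i-1}$, handled identically.

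The only terms where the $a$-power bookkeeping does not immediately yield decay are the top-order ($i=N$) pieces of $Z_2^i$, $Z_3^i$, and the commutator $Z_6^N$ that involve $\mathcal D_{N+1} H$ through $\mathcal D_N H_\tau$ paired with a factor of $\mathcal D_{N+1}H$; here one applies $2ab\le\kappa a^2+\kappa^{-1}b^2$ with a small $\kappa$, absorbing a $\kappa\|\mathcal D_{N+1}H\|_{N+1}^2$ into $\kappa\mathcal S^N(\tau)$ on the right-hand side and sending the remainder to the $e^{-a_0\tau'}$ integral. The coercivity contribution $\mathcal C^{N-1}(\tau)$ is appended by invoking Lemma \ref{L:COERCIVITY} on the output energy norm, which upgrades the control of $\|\mathcal D_{i+1}H\|_{i+1}^2$ at non-top orders once $\mathcal E^N$ is in hand, and the initial value term $\|H(0)\|_0^2$ enters through the $i=0$ coercivity step that lacks a positive definite quadratic form in $H$ itself. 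The main technical obstacle is the top-order commutator $[\bar{\mathcal D}_{i-1},\xi^4/\J^{\gamma+1}]D_r L_0 H$ inside $C_i[H]$: one must show that every differentiation of $\xi^4/\J^{\gamma+1}$ produces only lower-order $\uptheta$-derivatives so that the top order of $H$ that remains is at most $\mathcal D_{N+1}H$ (matching the energy), and that the accompanying powers of $1/r$ from the vector fields $V_k\in\{D_r,1/r\}$ are compensated by the extra $r$ factor in $D_r = r^{-2}\partial_r(r^2\cdot)$; verifying this cleanly at the top order is where the careful structural setup of $\mathcal D_j$, $\bar{\mathcal D}_j$, and $\mathcal P_j$ pays off and is the step I expect to consume the most effort.
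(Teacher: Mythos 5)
Your overall framework is correct: integrate the energy identity, sum over $0\le i\le N$, append $\mathcal C^{N-1}$ via Lemma \ref{L:COERCIVITY}, and bound the $Z_j^i$ term-by-term, routing negative powers of $a$ into $e^{-a_0\tau'}$ using $a_0=\tfrac{d(\gamma)}{2}a_1$ and $a\sim e^{a_1\tau}$. You also correctly locate the origin of the $\|H(0)\|_0^2$ term in the $i=0$ step and the use of Young's inequality plus the coercivity construction $\mathcal C^{N-1}$. However, there is a genuine gap in your treatment of $j=5$ and $j=7$: you propose to estimate $Z_5^i$ and $Z_7^i$ separately, claiming $Z_5^i$ is a routine product-rule-plus-embedding computation and $Z_7^i$ is \emph{of the same structural type as} $Z_4^i$. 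Neither of these is controllable on its own, because each contains the same unbounded, weight-losing term, and the heart of the paper's argument is the algebraic cancellation between them.

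Concretely, using the affine ODE $\bar\rho r=-\partial_r(\bar\rho^\gamma d)$ one has $L_0H=-rD_rH+\bar\rho^{\gamma-1}d\,\partial_r D_r H$, so the $Z_7^i$ integrand $\bar{\mathcal D}_{i-1}\bigl(\partial_r(\xi^4/\J^{\gamma+1})L_0H\bigr)$ contains $\gamma(\gamma+1)\J^{-\gamma-2}\xi^4(\partial_r\J)\,rD_rH$ with \emph{no} compensating factor of $d$. Since $\partial_r\J$ is already two $H$-derivatives deep, the top-order distribution of $\bar{\mathcal D}_{i-1}$ onto it produces a quantity at the level of $\mathcal D_{i+1}H$ paired with the weight $d^i$, and $\|\mathcal D_{i+1}H\|_i$ is simply not controlled by the energy (which only gives $\|\mathcal D_{i+1}H\|_{i+1}$). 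The identical term, with opposite sign, appears in $D_r[r\mathcal R_2[H/r]]$ coming from $\partial_r(\J^{-\gamma-1}-1)\,\xi^2(r\partial_r\uptheta+3\uptheta)$ upon using $r\partial_r\uptheta+3\uptheta=D_rH$; so it is equally fatal inside $Z_5^i$. The paper therefore combines these into a single quantity $Z^i_{\mathcal R_2 5 7}$ and cancels the offending term exactly; only \emph{after} the cancellation do the remaining pieces come with either one fewer $H$-derivative or an extra factor of $\bar\rho^{\gamma-1}d$, which restores the weight balance. A second, subsidiary structural fact you would also need is the identity $\partial_r\J-\xi^2(r\partial_r^2\uptheta+4\partial_r\uptheta)=2\xi(\partial_r\uptheta)^2 r$, which converts a linear-looking term of $\mathcal R_3$ into a genuinely quadratic one. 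Neither observation is a consequence of the product-rule and embedding machinery alone, and without them the plan as written does not close. Relatedly, you flag the commutator $Z_6^i$ as the main difficulty; in the actual proof $Z_6^i$ is handled with the commutator identity and is no more delicate than $Z_4^i$, whereas the $Z_5/Z_7$ recombination is where the real work is.
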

\begin{proof}
We integrate our energy identity written in terms of our high order quantities (\ref{E:ENERGYIDENTITYINTERMSOFHOQ}) from $0$ to $\tau$, and for $\gamma > \frac{5}{3}$ apply Lemma \ref{L:COERCIVITY}, to obtain the left hand side of (\ref{E:ENERGYMAINGAMMALEQ5OVER3}). Our goal is then to estimate $|Z_j^i|$ for all $j=1,2,3,4,5,6,7$ and all $0 \leq i \leq N$.

\underline{$j=1$}:
Fix $i \geq 1$. Suppose $i=\ell+1$ for $0 \leq \ell \leq N-1$. Then
\begin{align}
| Z^i_1 | & = |  a^{b(\gamma)} \int_0^1 \mathcal{D}_i  H \mathcal{D}_i H_\tau r^2 d^i \, dr | \notag \\
&\lesssim a^{b(\gamma)} ( \int_0^1 (\mathcal{D}_i  H)^2 r^2 d^i \, dr)^{1/2}  ( \int_0^1 (\mathcal{D}_i H_\tau)^2 r^2 d^i \, dr)^{1/2} \notag \\
& =a^{b(\gamma)} ( \int_0^1 (\mathcal{D}_{\ell+1} H)^2 r^2 d^{\ell+1} \, dr)^{1/2}  ( e^{d(\gamma) \tau} e^{-d(\gamma) \tau} \int_0^1 (\mathcal{D}_i H_\tau)^2 r^2 d^i \, dr)^{1/2} \notag \\
& \lesssim (\mathcal{S}^N)^{1/2} e^{-a_0 \tau} (\mathcal{S}^N)^{1/2}  = e^{-a_0 \tau} \mathcal{S}^N (\tau).
\end{align}
For $i=0$, we first need to derive the energy identity (\ref{E:ENERGYIDENTITY}) for $i=0$. Multiplying our equation for $H$ (\ref{E:HEQN}) by $a^{b(\gamma)}$ we have
\begin{equation}\label{E:HEQNPREZEROORDEREST}
a^{d(\gamma)} H_{\tau \tau} + a^{d(\gamma)-1} a_{\tau} H_\tau - \gamma a^{b(\gamma)}\frac{\xi^4}{\J^{\gamma+1}} L_0 H =  a^{b(\gamma)} H +  a^{b(\gamma)} \frac{H^2}{r} - a^{b(\gamma)} ( r \mathcal{R}_1 [\tfrac{H}{r}]+r \mathcal{R}_2 [\tfrac{H}{r}] + r \mathcal{R}_3 [\tfrac{H}{r}]).
\end{equation}
Multiply (\ref{E:HEQNPREZEROORDEREST}) by $r^2 H_\tau$ and integrate in $r$ from $0$ to $1$
\begin{align}
&\int_0^1 a^{d(\gamma)} H_{\tau \tau} H_{\tau} r^2 \, dr + \int_0^1 a^{d(\gamma)-1} a_\tau (H_\tau)^2 r^2 \, dr -\gamma \int_0^1 a^{b(\gamma)} \frac{\xi^4}{\bar{\rho} \J^{\gamma+1}} \partial_r ( \bar{\rho}^\gamma d D_r H) H_\tau r^2 \, dr \notag \\
&=\int_0^1 a^{b(\gamma)} H H_\tau r^2 \, dr +   \int_0^1 a^{b(\gamma)} \frac{H^2}{r} H_\tau r^2 \, dr - \int_0^1 a^{b(\gamma)} ( r \mathcal{R}_1 [\tfrac{H}{r}]+r \mathcal{R}_2 [\tfrac{H}{r}] + r \mathcal{R}_3 [\tfrac{H}{r}]) H_\tau r^2 \, dr. \label{E:POSTMULTIPLYZEROORDER}
\end{align}
Rewriting the third term of the left hand side of (\ref{E:POSTMULTIPLYZEROORDER}) we then have
\begin{align}
&\int_0^1 a^{d(\gamma)} H_{\tau \tau} H_{\tau} r^2 \, dr + \int_0^1 a^{d(\gamma)-1} a_\tau (H_\tau)^2 r^2 \, dr +\gamma \int_0^1 a^{b(\gamma)} \frac{\xi^4}{\bar{\rho} \J^{\gamma+1}} \mathcal{D}_1 H \mathcal{D}_1 H_\tau r^2 \bar{\rho}^{\gamma-1} d \, dr \notag \\
&=\int_0^1 a^{b(\gamma)} H H_\tau r^2 \, dr + \int_0^1 a^{b(\gamma)} \frac{H^2}{r} H_\tau r^2 \, dr - \gamma \int_0^1 a^{b(\gamma)} \bar{\rho}^\gamma d \mathcal{D}_1 H \partial_r \left( \frac{\xi^4}{\bar{\rho} \J^{\gamma+1}} \right) H_\tau r^2  \, dr \notag \\ 
&- \int_0^1 a^{b(\gamma)} ( r \mathcal{R}_1 [\tfrac{H}{r}]+r \mathcal{R}_2 [\tfrac{H}{r}] + r \mathcal{R}_3 [\tfrac{H}{r}]) H_\tau r^2 \, dr. \label{E:REWRITETHIRDTERMZEROORDER}
\end{align}
Writing (\ref{E:REWRITETHIRDTERMZEROORDER}) using perfect time derivatives
\begin{align}
&\frac{d}{d \tau} \left( \frac{1}{2} a^{d(\gamma)} \int_0^1 (H_\tau)^2 r^2 \, dr + \frac{\gamma}{2} a^{b(\gamma)} \int_0^1 \frac{\xi^4}{\J^{\gamma+1}} ( \mathcal{D}_1 H)^2 \bar{\rho}^{\gamma-1} d r^2 \, dr \right) \notag \\
&+ \frac{2-d(\gamma)}{2} a^{d(\gamma)-1} a_\tau \int_0^1 (H_\tau)^2 r^2 \, dr  - \frac{\gamma b(\gamma)}{2} a^{b(\gamma)-1} a_{\tau} \int_0^1 \frac{\xi^4}{\J^{\gamma+1}} ( \mathcal{D}_1 H)^2 \bar{\rho}^{\gamma-1} d r^2 \, dr \notag \\
&= a^{b(\gamma)} \int_0^1 H H_\tau r^2 \, dr + a^{b(\gamma)} \int_0^1 \frac{H^2}{r} H_\tau r^2 \, dr + \frac{\gamma}{2} a^{b(\gamma)} \int_0^1 \partial_\tau \left(\frac{\xi^4}{\J^{\gamma+1}}\right) ( \mathcal{D}_1 H)^2 \bar{\rho}^{\gamma-1} d r^2 \, dr  \notag \\
&- \gamma a^{b(\gamma)} \int_0^1 \bar{\rho}^\gamma d \mathcal{D}_1 H \partial_r \left( \frac{\xi^4}{\bar{\rho} \J^{\gamma+1}} \right) H_\tau r^2  \, dr  - a^{b(\gamma)} \int_0^1 ( r \mathcal{R}_1 [\tfrac{H}{r}]+r \mathcal{R}_2 [\tfrac{H}{r}] + r \mathcal{R}_3 [\tfrac{H}{r}]) H_\tau r^2 \, dr. \label{E:ZEROORDERENERGYIDENTITY}
\end{align}
Therefore in terms of our high order quantities at the zeroth order level, we have
\begin{align}
\frac{d}{d \tau} ( \mathcal{E}_0 (\tau) ) + \mathscr{D}_0 (\tau) &= R_0 \notag \\
& = Z_1^0 + Z_2^0 +Z_3^0 +Z_4^0 +Z_5^0 +Z_6^0 +Z_7^0,
\end{align}\label{E:ZEROORDERENERGYIDENTITYINTERMSOFHIGHORDERQUANTITIES}
where $Z_6^0=Z_7^0=0$. Returning to our $j=1$ estimate, that is our $|Z_1^i|$ estimate, for $i=0$ using Young's inequality with $0<\kappa\ll 1$ we have
\begin{align}
\left| \int_0^\tau  Z_0^1  d \tau' \right|  & = \left|  \int_0^\tau a^{b(\gamma)} \int_0^1 H H_\tau r^2 \, dr  d\tau' \right| \notag \\
& = \left|  \int_0^\tau \int_0^1 a^{b(\gamma)} a^{-\frac{d(\gamma)}{4}} H a^{\frac{d(\gamma)}{4}} H_\tau r^2 \, dr  d\tau' \right| \notag \\
& \lesssim \kappa \int_0^\tau \int_0^1 a^{-\frac{d(\gamma)}{2}} H^2 r^2 \, dr d \tau' + \int_0^\tau \int_0^1 a^{\frac{d(\gamma)}{2}} H_\tau^2 r^2 \, dr d\tau' \notag \\
& \lesssim \kappa \sup_{0 \leq \tau' \leq \tau} \{ \int_0^1 H^2 r^2 \, dr \} \int_0^\tau a^{-\frac{d(\gamma)}{2}} d \tau' + \int_0^\tau \int_0^1 a^{-\frac{d(\gamma)}{2}} a^{d(\gamma)} H_\tau^2 r^2 \, dr d\tau' \notag \\
& \lesssim \kappa \sup_{0 \leq \tau' \leq \tau} \{ \int_0^1 H^2 r^2 \, dr  \} + \int_0^\tau e^{-a_0 \tau'} \mathcal{S}^N (\tau') d \tau'.
\end{align}
For $\int_0^1 H^2 r^2 \, dr$ apply a coercivity estimate as follows
\begin{equation}
H=\int_0^\tau H_\tau d \tau' + H(0) = \int_0^\tau a^{-\frac{3 \gamma-3}{2}} a^{\frac{3 \gamma -3 }{2}} H_\tau d \tau' + H(0) \lesssim \sup_{0 \leq \tau' \leq \tau} a^{\frac{3 \gamma -3}{2}} H_\tau + H(0),
\end{equation}
and hence using Cauchy's inequality ($ab \lesssim a^2 + b^2,$ $a,b \in \mathbb{R}$)  we have
\begin{equation}
\int_0^1 (H)^2 r^2 \, dr \lesssim \mathcal{S}^N(\tau) + \| H(0) \|_0^2.
\end{equation}
Therefore 
\begin{align}
\left| \int_0^\tau Z_1^0 d \tau' \right| \lesssim \kappa \mathcal{S}^N(\tau) + \| H(0) \|_0^2 +  \int_0^\tau e^{-a_0 \tau'} \mathcal{S}^N(\tau').
\end{align}
\underline{$j=2$}: Fix $0 \leq i \leq N$. Then with $\psi \geq 0$ a smooth cut-off function such that $\psi=1$ on $[0,\tfrac{1}{2}]$, $\psi = 0$ on $[\tfrac{3}{4},1]$ and $\psi' \leq 0$,
\begin{align}
|Z^i_2| &= \left|  a^{b(\gamma)} \int_0^1 \mathcal{D}_i \left( \frac{H^2}{r} \right) \mathcal{D}_i H_\tau r^2 d^i \, dr \right| \notag \\
&=\left|  a^{b(\gamma)} \int_0^{\frac{3}{4}} \mathcal{D}_i \left( \frac{H^2}{r} \right) \mathcal{D}_i H_\tau r^2 d^i \psi \, dr +  a^{b(\gamma)} \int_{\frac{1}{2}}^1 \mathcal{D}_i \left( \frac{H^2}{r} \right) \mathcal{D}_i H_\tau r^2 d^i (1-\psi) \, dr \right| \notag \\
&\lesssim \left| \int_0^{\frac{3}{4}} \mathcal{D}_i \left( \frac{H^2}{r} \right) \mathcal{D}_i H_\tau r^2 d^i \psi \, dr \right| + \left| \int_{\frac{1}{2}}^1 \mathcal{D}_i \left( \frac{H^2}{r} \right) \mathcal{D}_i H_\tau r^2 d^i (1-\psi) \, dr \right|. \label{E:ZI2SPLIT}
\end{align}
Next recalling the definition of the vector field $\mathcal{P}_i$ given in Section \ref{S:NOTATION}, first note $\D_i \in \mathcal{P}_i$. Then by the product rule for $\PC_i$ 
Lemma \ref{L:product}
\begin{equation}
\D_i \left[ \frac{H^2}{r} \right] = \sum_{k=0}^i \sum_{A \in \PC_k \atop B \in \bar{\PC}_{i-k} } c_k^{\D_i A B} A [H] B \left[ \frac{H}{r} \right],
\end{equation}
for some real valued constants $c_k^{\D_i B C}$. Now note for $B \in \bar{\PC}_{i-k}$, $B \left[ \frac{H}{r} \right] = C[H]$ for $C \in \PC_{i+1-k}$. Then we can write in terms of low order and high order derivatives
\begin{equation}
\D_i \left[ \frac{H^2}{r} \right]=\sum_{k=0}^{i/2} \sum_{A \in \PC_k \atop C \in \PC_{i+1-k}} c_k^{\D_i A C} A [H] C[H].
\end{equation}
Hence applying Lemma \ref{L:control} to estimate $\PC_{i+1-k}$ using $\D_{i+1-k}$, our $L_\infty$ embedding for $\PC_k$ (\ref{E:LINFTY3}) and the fact that $1 \lesssim d \lesssim 1$ on $[0,\tfrac{3}{4}]$, we have for the left hand integral on the last line of (\ref{E:ZI2SPLIT})
\begin{align}
&\left| \int_0^{\frac{3}{4}} \mathcal{D}_i \left( \frac{H}{r} \right) \mathcal{D}_i H_\tau r^2  d^i \psi \, dr \right| \lesssim \sum_{k=0}^{i/2} \sum_{A \in \PC_k \atop C \in \PC_{i+1-k}} \|A(H)\|_{L^\infty} \left| \int_0^{\frac{3}{4}} C[ H ] \mathcal{D}_i H_\tau r^2 d^i \psi \, dr \right| \notag \\
& \lesssim (\mathcal{S}^N(\tau) + \| H(0) \|^2  )\sum_{k=0}^{i/2} \sum_{C \in \PC_{i+1-k}} \left( \int_0^{\frac{3}{4}} ( C [H] )^2 r^2 d^i \psi^2 \, dr \right)^{1/2} \left( \int_0^{\frac{3}{4}} \psi^2 (\mathcal{D}_i H_\tau)^2 r^2 d^i \, dr \right)^{1/2} \notag \\
& \lesssim \sum_{k=0}^{i/2} \sum_{C \in \PC_{i+1-k}} \left( \int_0^{\frac{3}{4}} ( C[ H ] )^2 r^2 \psi^2 \, dr \right)^{1/2} \left( \int_0^1 (\mathcal{D}_i H_\tau)^2 r^2 d^i \, dr \right)^{1/2} \notag \\
& \lesssim \sum_{k=0}^{i/2} \left( \int_0^{\frac{3}{4}} ( \mathcal{D}_{i+1-k} H )^2 r^2 \psi^2 \, dr \right)^{1/2} e^{-a_0 \tau} (\mathcal{S}^N(\tau))^{\frac{1}{2}} \notag \\
& \lesssim \sum_{k=0}^{i/2} \left( \int_0^{\frac{3}{4}} ( \mathcal{D}_{i+1-k} H )^2 r^2 \psi^2 d^{i+1} \, dr \right)^{1/2} e^{-a_0 \tau} (\mathcal{S}^N(\tau))^{\frac{1}{2}} \notag \\
& \lesssim  (\mathcal{S}^N(\tau))^{\frac{1}{2}} e^{-a_0 \tau} (\mathcal{S}^N(\tau))^{\frac{1}{2}} \lesssim e^{-a_0 \tau} \mathcal{S}^N(\tau).
\end{align}
For the second integral on the right hand side of (\ref{E:ZI2SPLIT}), also use
\begin{equation}
\PC_k u = \sum_{\ell=0}^k c_\ell (r) \D_\ell (H),
\end{equation}
where $c_\ell$ are smooth functions of $r$ on $[\tfrac{1}{4},1]$ in this case to obtain
\begin{align}
& \left| \int_{\frac{1}{2}}^1 \mathcal{D}_i \left( \frac{H^2}{r} \right) \mathcal{D}_i H_\tau r^2 d^i (1-\psi) \, dr \right|   \lesssim \sum_{k=0}^{i/2} \sum_{A \in \PC_k \atop C \in \PC_{i+1-k}}  \|A(H)\|_{L^\infty} \left( \int_{\tfrac{1}{2}}^1 C[H] \mathcal{D}_i H_\tau r^2 d^i (1-\psi) \, dr \right)\notag \\
& \lesssim (\mathcal{S}^N(\tau) + \| H(0) \|^2  )  \sum_{k=0}^{i/2} \sum_{\ell=0}^{i+1-k} \int_{\tfrac{1}{2}}^1 \D_{\ell} H \mathcal{D}_i H_\tau r^2 d^i (1-\psi) \, dr \notag \\
& \lesssim \sum_{k=0}^{i/2} \sum_{\ell=0}^{i+1-k} \kappa ( \int_{\frac{1}{2}}^1 a^{-\frac{d(\gamma)}{2}} (\mathcal{D}_\ell H)^2 r^2 d^i \, dr ) + \int_{\frac{1}{2}}^1 a^{\frac{d(\gamma)}{2}} (\mathcal{D}_i H_\tau)^2 r^2 d^i \, dr  \notag \\
&\lesssim  a^{-\frac{d(\gamma)}{2}} \kappa \mathcal{S}^N(\tau) + a^{-\frac{d(\gamma)}{2}} \|H(0)\|^2  +   e^{-a_0 \tau} \mathcal{S}^N(\tau).
\end{align}
Therefore combining the above analysis
\begin{equation}
\int_0^\tau Z^i_2 d \tau' \lesssim \kappa \mathcal{S}^N(\tau) + \| H(0) \|^2_0 + \int_0^\tau e^{-a_0 \tau'} \mathcal{S}^N(\tau') d \tau'.
\end{equation}
\underline{$j=3$}:
For $Z_3^i=\frac{\gamma}{2} \int_0^1 \partial_\tau \left( \frac{ \xi^4}{\J^{\gamma+1}} \right) ( \mathcal{D}_{i+1} H)^2 \bar{\rho}^{\gamma-1} d^{i+1} r^2 dr$, first compute:
\begin{equation}
\partial_\tau (\xi^4 \J^{-(\gamma+1)})  = 4 \xi^3 (\partial_\tau \xi) \J^{-(\gamma+1)} - (\gamma+1) \J^{-(\gamma+2)} (\partial_\tau \J) \xi^4.
\end{equation}
Now using $\xi=\uptheta+1$
\begin{align}
\partial_\tau \xi &= \partial_\tau \uptheta = \partial_\tau ( \tfrac{1}{r} H ) = \tfrac{1}{r} H_\tau, \notag \\ 
\partial_\tau \J &= \partial_\tau (\xi^3 + \xi^2 \xi_r r ), \notag \\ 
\partial_\tau (\xi_r) &= \partial_r (\uptheta_\tau) = \partial_r \left(\frac{H_\tau}{r} \right), \notag \\
r \partial_\tau (\xi_r) &= r \partial_r (\frac{H_\tau}{r}) = r(-\frac{1}{r^2} H_\tau + \frac{1}{r} \partial_r H_\tau ) = \partial_r H_\tau - \frac{1}{r} H_\tau  = \D_1 H_\tau -\frac{3}{r} H_\tau. 
\end{align}
Hence
\begin{align}
&\left| Z^i_3 \right| = \frac{\gamma}{2} \left| \int_0^1 \left [ (4 \xi^3 \J^{-(\gamma+1)}) \frac{H_\tau}{r} -(\gamma+1) \xi^4 \J^{-(\gamma+2)}(3 \xi^2 \frac{H_\tau}{r} + 2 \xi H_\tau + \D_1 H_\tau  \right. \right.  \notag \\
&\left. \left.  \qquad \qquad \qquad \qquad  -3 \frac{H_\tau}{r}) \right] (\D_{i+1} H)^2 \bar{\rho}^{\gamma-1} d^{i+1} r^2 \, dr \right| \notag \\
& \lesssim \left| \int_0^1 \frac{H_\tau}{r} ( \D_{i+1} H)^2 d^{i+1} r^2 \, dr \right| + \left| \int_0^1 H_\tau ( \D_{i+1} H)^2 d^{i+1} r^2 \, dr \right| + \left| \int_0^1 ( \D_1 H_\tau ) ( \D_{i+1} H)^2 d^{i+1} r^2 \, dr \right| \notag \\
& \lesssim ( \| \frac{H_\tau}{r} \|_{\infty} + \|H_\tau \|_{\infty} + \| \mathcal{D}_1 H_\tau \|_{\infty}) \int_0^1 (\D_{i+1} H)^2 \, d^{i+1} r^2 \, dr. \label{E:ZI3FIRSTBOUND}
\end{align}
Applying the $L^\infty$ embedding (\ref{E:LINFTY2})
\begin{align}
\| \frac{H_\tau}{r} \|_{\infty} &\lesssim \left( \sum_{k=2}^3 \int_0^{\frac{3}{4}} (\mathcal{D}_k H_\tau)^2 r^2 \, dr + \sum_{k=2}^3 \int_{\frac{1}{4}}^1 d^2 (\mathcal{D}_k H_\tau)^2 \, dr  \right)^{\frac{1}{2}} \notag \\
& \lesssim \left( \sum_{k=2}^3 \int_0^{\frac{3}{4}} (\mathcal{D}_k H_\tau)^2 r^2 d^k \, dr + \sum_{k=0}^2 \int_{\frac{1}{4}}^1 r^2 d^2 (\mathcal{D}_k H_\tau)^2 \, dr  \right)^{\frac{1}{2}} \notag \\
& \lesssim \left( \sum_{k=2}^3 \int_0^{1} (\mathcal{D}_k H_\tau)^2 r^2 d^k \, dr + \sum_{k=0}^2 \int_{0}^1 r^2 d^2 (\mathcal{D}_k H_\tau)^2 \, dr  \right)^{\frac{1}{2}} \notag \\
& \lesssim e^{-a_0 \tau} (\mathcal{S}^N)^{1/2}.
\end{align}
Applying the $L^\infty$ embedding (\ref{E:LINFTY1A}) 
\begin{align}
\| H_\tau \|_{\infty} &\lesssim \left( \sum_{k=2}^3 \int_0^{\frac{3}{4}} (\mathcal{D}_k H_\tau)^2 r^2 \, dr + \sum_{k=0}^2 \int_{\frac{1}{4}}^1 d^2 (\mathcal{D}_k H_\tau)^2 \, dr  \right)^{\frac{1}{2}} \notag \\
& \lesssim \left( \sum_{k=2}^3 \int_0^{\frac{3}{4}} (\mathcal{D}_k H_\tau)^2 r^2 d^k \, dr + \sum_{k=0}^2 \int_{\frac{1}{4}}^1 r^2 d^2 (\mathcal{D}_k H_\tau)^2 \, dr  \right)^{\frac{1}{2}} \notag \\
& \lesssim \left( \sum_{k=2}^3 \int_0^{1} (\mathcal{D}_k H_\tau)^2 r^2 d^k \, dr + \sum_{k=0}^2 \int_{0}^1 r^2 d^2 (\mathcal{D}_k H_\tau)^2 \, dr  \right)^{\frac{1}{2}} \notag \\
& \lesssim e^{-a_0 \tau} (\mathcal{S}^N)^{1/2}.
\end{align}
Applying the $L^\infty$ embedding  (\ref{E:LINFTY1B}) and using that $\bar{\mathcal{D}}_k \mathcal{D}_1 = \mathcal{D}_{k+1}$ 
\begin{align}
\| \mathcal{D}_1 H_\tau \|_{\infty} &\lesssim \left( \sum_{k=1}^2 \int_0^{\frac{3}{4}} (\bar{\mathcal{D}}_k \mathcal{D}_1 H_\tau)^2 r^2 \, dr + \sum_{k=0}^3 \int_{\frac{1}{4}}^1 d^4 (\bar{\mathcal{D}}_k \mathcal{D}_1 H_\tau)^2 \, dr  \right)^{\frac{1}{2}} \notag \\
& \lesssim \left( \sum_{k=1}^2 \int_0^{\frac{3}{4}} (\mathcal{D}_{k+1} H_\tau)^2 r^2 d^{k+1} \, dr + \sum_{k=0}^3 \int_{\frac{1}{4}}^1 r^2 d^{k+1} (\mathcal{D}_{k+1} H_\tau)^2 \, dr  \right)^{\frac{1}{2}} \notag \\
& \lesssim \left( \sum_{k=1}^2 \int_0^{1} (\mathcal{D}_{k+1} H_\tau)^2 r^2 d^{k+1} \, dr + \sum_{k=0}^3 \int_{0}^1 r^2 d^{k+1} (\mathcal{D}_k H_\tau)^2 \, dr  \right)^{\frac{1}{2}} \notag \\
& \lesssim e^{-a_0 \tau} (\mathcal{S}^N)^{1/2}.
\end{align}
Then since $\int_0^1 (\D_{i+1} H)^2 \, d^{i+1} r^2 \, dr \lesssim \mathcal{S}^N$ from (\ref{E:ZI3FIRSTBOUND}) we have 
\begin{equation}
| Z^i_3 | \lesssim e^{-a_0 \tau} (\mathcal{S}^N)^{\frac{3}{2}} \lesssim e^{-a_0 \tau} \mathcal{S}^N.
\end{equation}
\underline{$j=4$}:
For $Z^i_4=-\gamma \int_0^1 a^{b(\gamma)} \bar{\rho}^{\gamma+i(\gamma-1)} d^{1+i}  \mathcal{D}_{i+1} H \mathcal{D}_{i} H_\tau r^2 \partial_r \left( \frac{\xi^4}{\bar{\rho}^{1+i(\gamma-1)} \J^{\gamma+1}} \right) \, dr$ first note
\begin{align}
| \partial_r (\xi^4 \J^{-(\gamma+1)} \bar{\rho}^{-(1+i(\gamma-1))} | &= | 4 \xi^3 \xi_r \J^{-(\gamma+1)} \bar{\rho}^{-(1+i(\gamma-1))} - (\gamma+1) \J^{-(\gamma+2)} \J_r \bar{\rho}^{-(1+i(\gamma-1)} \notag \\
&- (1+i(\gamma-1)) \bar{\rho}^{-(2+i(\gamma-1))} \bar{\rho}_r \xi^4 \J^{-(\gamma+1)} | \notag \\
&\lesssim 1.
\end{align}
Then
\begin{align}
|Z^i_4| & \lesssim \left| \int_0^1 d^{1+i} \mathcal{D}_{i+1} H \mathcal{D}_i H_\tau r^2 \, dr \right| \notag \\
& \lesssim ( \int_0^1 d^{1+i} (\mathcal{D}_{i+1} H)^2 r^2 \, dr )^{1/2} (\int_0^1 d^i (\mathcal{D}_i H_\tau)^2 r^2 \, dr)^{1/2} \notag \\
& \lesssim e^{-a_0 \tau} (\mathcal{S}^N)^{1/2} (\mathcal{S}^N)^{1/2} = e^{-a_0 \tau} \mathcal{S}^N.
\end{align}
\underline{$j=5$}:
Recall
\begin{equation}
Z^i_5=- \int_0^1 a^{b(\gamma)} \mathcal{D}_i \left[ r \mathcal{R}_1[\tfrac{H}{r}] + r \mathcal{R}_2[\tfrac{H}{r}] \right] \mathcal{D}_i H_\tau r^2 d^i \, dr.
\end{equation}
We start by considering 
\begin{equation}
- \int_0^1 a^{b(\gamma)} \mathcal{D}_i \left[r \mathcal{R}_2[\tfrac{H}{r}] \right] \mathcal{D}_i H_\tau r^2 d^i \, dr = - \int_0^1 \bar{\mathcal{D}}_{i-1} \left[ D_r [ r \mathcal{R}_2[\tfrac{H}{r} ] ]  \right] \mathcal{D}_i H_\tau r^2 d^i \, dr.
\end{equation}
since $\mathcal{R}_2$ requires more care. We combine this term with the \underline{$j=7$}, $Z^i_7$, term which is as follows
\begin{equation}
\gamma \int_0^1 a^{b(\gamma)} \bar{\mathcal{D}}_{i-1} \left( \partial_r \left( \frac{\xi^4}{\J^{\gamma+1}} \right) L_0 H \right) r^2 d^i \mathcal{D}_i \mathcal{H}_\tau \, dr
\end{equation}
to obtain
\begin{equation}
a^{b(\gamma)} \int_0^1 \bar{\mathcal{D}}_{i-1} \left( \gamma \partial_r (\frac{\xi^4}{\J^{\gamma+1}}) L_0 H - D_r [ r \mathcal{R}_2 [ \frac{H}{r} ] ] \right) D_i H_\tau r^2 d^i \, dr : = Z^i_{\mathcal{R}_2 5 7} \text{ for } i \geq 1.
\end{equation}
We note for $i=0$, we have $Z^0_{\mathcal{R}_2 5 7}:=-\int_0^1 r \mathcal{R}_3 [ \frac{H}{r} ] H_\tau r^2 \, dr$. We examine the structure of $Z^i_{\mathcal{R}_2 5 7}$ for $i \geq 1$. Note 
\begin{align}
&\gamma \partial_r (\frac{\xi^4}{\J^{\gamma+1}}) L_0 H - D_r [ r \mathcal{R}_2 [ \frac{H}{r} ] ] =-\left[ \gamma \partial_r ( \J^{-\gamma-1} r (D_r H) \xi^4 + \partial_r ( r \mathcal{R}_2 [ \frac{H}{r} ] ) \right] \notag \\
&+ \gamma \partial_r ( \xi^4 ) \J^{-\gamma-1} L_0 H + \gamma \xi^4 \bar{\rho}^{\gamma-1} d (\partial_r D_r H) \partial_r (\J^{-\gamma-1}) - \frac{2}{r} ( r \mathcal{R}_2 [ \frac{H}{r} ] ). \label{E:SQUAREBRACKETSCONCERNINGTERM}
\end{align}
Next note
\begin{align}
\mathcal{R}_2[\uptheta] &= \mathcal{R}_2^a [\uptheta] + \mathcal{R}^b_2 [\uptheta] \text{ where } \notag \\
\mathcal{R}_2^a [\uptheta] & := -\xi^2 ( (\J^{-\gamma} -1) + \gamma (\J-1) + \gamma ( \J^{-\gamma-1} - 1) \xi^2 ( r \partial_r \uptheta + 3 \uptheta) ), \notag \\
\mathcal{R}_2^b [\uptheta] &:=- \gamma \xi^2 (3 \uptheta^2 + 2 \uptheta^3 ).
\end{align}
Now
\begin{align}
\partial_r ( r \mathcal{R}^a_2 [ \uptheta ] ) &= - \partial_r [ r \xi^2  ((\J^{-\gamma} - 1) + \gamma (\J - 1) + \gamma (\J^{-\gamma-1}-1)\xi^2 ( r \partial_r \uptheta + 3 \uptheta) ) ] \notag \\
&=-\partial_r (r \xi^2) ( (\J^{-\gamma}-1) + \gamma (\J-1) + \gamma( \J^{-\gamma-1} -1) \xi^2 (r \partial_r \uptheta + 3 \uptheta)) \notag \\
&-r \xi^2 \partial_r \left[ (\J^{-\gamma-1} + \gamma (\J - 1) + \gamma (\J^{-\gamma-1}-1) \xi^2 (r \partial_r \uptheta + 3 \uptheta) \right].
\end{align}
Using $\partial_r ( \J^{-\gamma}-1) = -\gamma \J^{-\gamma-1} \partial_r \J$ we have
\begin{align}
&\partial_r \left[ (\J^{-\gamma-1} + \gamma (\J - 1) + \gamma (\J^{-\gamma-1}-1) \xi^2 (r \partial_r \uptheta + 3 \uptheta) \right] \notag \\
&= -\gamma \J^{-\gamma-1} \partial_r \J + \gamma \partial_r \J - (\gamma(\gamma+1) \J^{-\gamma-2} \partial_r \J) \xi^2 (r \partial_r \uptheta + 3 \uptheta) \notag \\
&+ \gamma (\J^{-\gamma-1}-1) \partial_r (\xi^2 (r \partial_r \uptheta + 3 \uptheta)) \notag \\
&=-\gamma(\gamma+1)\J^{-\gamma-2} (\partial_r \J) \xi^2 (r \partial_r \uptheta + 3 \uptheta) + \gamma(\J^{-\gamma-1}-1) \partial_r (\xi^2 (r \partial_r \uptheta+3\uptheta) ) \notag \\
&+ (\gamma-\gamma \J^{-\gamma-1}) \partial_r \J \notag \\
&=-\gamma(\gamma+1)\J^{-\gamma-2} (\partial_r \J) \xi^2 (r \partial_r \uptheta + 3 \uptheta) - \gamma (\J^{-\gamma-1} - 1) (\partial_r \J - \xi^2 (r \partial_r^2 \uptheta + 4 \partial_r \uptheta)) \notag \\
&+ 2 \gamma (\J^{-\gamma-1}-1) \xi \xi_r (r \partial_r \uptheta + 3 \uptheta).
\end{align}
Thus using $r \partial_r (\frac{H}{r}) + 3 \frac{H}{r} =D_r H$ we can write
\begin{equation}
\partial_r ( r \mathcal{R}^a_2 [ \uptheta ] )  = r \xi^2 \gamma(\gamma+1) \J^{-\gamma-2} (\partial_r \J ) \xi^2 D_r H + \mathcal{R}_3[\uptheta],
\end{equation}
where we define
\begin{align}
&\mathcal{R}_3[\uptheta]=\mathcal{R}_3[\frac{H}{r}]:=\notag \\
&\quad r \xi^2 \gamma (\J^{-\gamma-1} - 1)(\partial_r \J - \xi^2 ( r \partial_r^2 \uptheta + 4 \partial_r \uptheta)) -2 r \gamma \xi^2 (\J^{-\gamma-1} -1)\xi \xi_r (r \partial_r \uptheta + 3 \uptheta)  \notag \\
&\quad - \partial_r (r \xi^2) [(\J^{-\gamma}-1) + \gamma (\J-1) + \gamma (\J^{-\gamma-1}-1) \xi^2 (r \partial_r \uptheta + 3 \uptheta)].
\end{align}
Returning to (\ref{E:SQUAREBRACKETSCONCERNINGTERM}), noting $\partial_r ( \J^{-\gamma}-1) = -\gamma \J^{-\gamma-1} \partial_r \J$, we can then cancel the unfavorable with respect to weight $d$ term $\gamma(\gamma+1)\J^{-\gamma-2} (\partial_r \J) r D_r H \xi^4$ and obtain
\begin{align}
&\gamma \partial_r (\frac{\xi^4}{\J^{\gamma+1}}) L_0 H - D_r [ r \mathcal{R}_2 [ \frac{H}{r} ] ] = -\gamma r \partial_r (\xi^4) \J^{-\gamma-1} D_r H + \gamma \partial_r (\xi^4) \bar{\rho}^{\gamma-1} d (\partial_r D_r H) \J^{-\gamma-1} \notag \\
&+\gamma \xi^4 \bar{\rho}^{\gamma-1} d (\partial_r D_r H) \partial_r \J^{-\gamma-1}  - \frac{2}{r} ( r \mathcal{R}_2 [ \frac{H}{r} ])  - \mathcal{R}_3 [ \frac{H}{r} ] - \partial_r [ r \mathcal{R}^b_2 [\frac{H}{r} ] ] \notag \\
&=-\gamma r \partial_r (\xi^4) \J^{-\gamma-1} D_r H + \gamma  \bar{\rho}^{\gamma-1} d \partial_r ( \frac{\xi^4}{\J^{\gamma+1}}) (\partial_r D_r H)  - \mathcal{R}_3 [ \frac{H}{r} ] - \frac{2}{r} ( r \mathcal{R}_2 [ \frac{H}{r} ])  - \partial_r [ r \mathcal{R}^b_2 [\frac{H}{r} ] ].
\end{align}
In terms of derivative count with respect to weight $d$, the only potentially concerning component above is $r \xi^2 \gamma (\J^{-\gamma-1} - 1)(\partial_r \J - \xi^2 ( r \partial_r^2 \uptheta + 4 \partial_r \uptheta))$ in $\mathcal{R}_3 [ \frac{H}{r} ] = \mathcal{R}_3 [ \uptheta ]$. Noting $\J=\xi^3+\xi^2 \xi_r r$ we rewrite this term in a favorable form using the following identity
\begin{equation}
\partial_r \J - \xi^2 ( r \partial_r^2 \uptheta + 4 \partial_r \uptheta ) = 2 \xi (\partial_r \uptheta)^2 r.
\end{equation}
Therefore we have written $Z^i_{\mathcal{R}_2 5 7}$ for $i \geq 1$ in a desirable form.

We now estimate $Z^i_{\mathcal{R}_2 5 7}$ for $i=0$ and then for $i \geq 1$. 

For $i=0$, note 
\begin{align}
Z^0_{\mathcal{R}_2 5 7} &=- a^{b(\gamma)} \int_0^1 r \mathcal{R}_2[\uptheta] H_{\tau} r^2 \, dr \notag \\
&=- a^{b(\gamma)} \int_0^1 r (- \xi^2 ( (\J^{\gamma-1} - 1) + \gamma (\J-1)) H_{\tau} r^2 \, dr  \notag \\
&- \int_0^1 a^{b(\gamma)} r (- \gamma \xi^2 ( (\J^{-\gamma-1}-1) \xi^2 (r \partial_r \uptheta + 3 \uptheta) + a^{b(\gamma)} (3 \uptheta^2 + 2 \uptheta^3)) H_\tau r^2 \, dr \notag \\
&=(i)+(ii).
\end{align}
For $(i)$, we use Taylor series to write
\begin{equation}\label{E:TAYLORSERIESJ}
\J^{-\gamma}-1+\gamma(\J-1)=\gamma(\gamma+1)\left( \int_0^1 (1-s)(1+s(\J-1))^{-\gamma-2} \, ds \right)(\J-1)^2.
\end{equation}
Now
\begin{align}
&(\J-1)^2=((1+\uptheta)^2(1+\uptheta+\uptheta_r r)-1)^2 =( (1+\tfrac{H}{r})^2(1+H_r)-1)^2 \notag \\
&=((1+\tfrac{2H}{r}+\tfrac{H^2}{r^2})(1+H_r)-1)^2 = (1+H_r+\tfrac{2}{r}H+\tfrac{2}{r}H H_r + \tfrac{H^2}{r^2}+\tfrac{H^2 H_r}{r} - 1)^2 \notag \\
&=(D_r H + \tfrac{2}{r} H (D_r H - 2 \tfrac{H}{r}) + \tfrac{H^2}{r^2} + \tfrac{H^2}{r}(D_r H - \tfrac{2}{r} H))^2 \notag \\
&=(D_r H + D_r H \tfrac{2}{r} H + D_r H \tfrac{H^2}{r} - \tfrac{3}{r^2} H^2 - \tfrac{2 H^3}{r^2} )^2 \notag \\
&=(D_r H)^2 + \tfrac{4 H (D_r H)^2}{r} + \tfrac{2 H^2 (D_r H)^2}{r} - \tfrac{6 H^2 D_r H}{r} + \tfrac{4 H^2 (D_r H)^2}{r^2} - \tfrac{4 H^3 (D_r H)^2}{r^2} + \notag \\
&+\tfrac{H^4 (D_r H)^2}{r^2} - \tfrac{12 H^3 (D_r H)}{r^3} - \tfrac{14 H^4 D_r H}{r^3} - \tfrac{4 H^5 D_r H}{r^3} + \tfrac{9 H^4}{r^4} + \tfrac{12 H^5}{r^4} + \tfrac{4 H^6}{r^4}. \label{E:JMINUS1EXPAND}
\end{align}
First using (\ref{E:TAYLORSERIESJ}) and the boundedness of  $\xi^2 \gamma(\gamma+1)\left( \int_0^1 (1-s)(1+s(\J-1))^{-\gamma-2} \, ds \right)$, we have for integral $(i)$
\begin{equation}
|(i)| \lesssim \left| \int_0^1 r^3 (\J-1)^2 H_\tau dr \right|.
\end{equation}
As can be seen from (\ref{E:JMINUS1EXPAND}) many contributions from $(\J-1)^2$ are similar. Therefore we give the key estimates below and remark similar arguments will hold for the other terms.

For the $(D_r H)^2$ term in (\ref{E:JMINUS1EXPAND}), apply the $L^\infty$ embedding (\ref{E:LINFTY1A}) 
\begin{equation}
\| \int_0^1 r^3 (D_r H)^2 H_\tau \, dr | \lesssim \| H_\tau \|_{\infty} \mathcal{S}^N \lesssim e^{-a_0 \tau} \mathcal{S}^N.
\end{equation}
For the $\tfrac{2 H^2 (D_r H)^2}{r}$ term in (\ref{E:JMINUS1EXPAND}), use the $L^\infty$ embedding  (\ref{E:LINFTY1A}) and a similar argument to that used for $Z_0^1$
\begin{align}
\left| \int_0^1 r^2 H^2 (D_r H)^2 H_\tau \, dr \right| & \lesssim \| \mathcal{D}_1 H \|^2_{\infty} \left( \kappa a^{-\frac{d(\gamma)}{2}} \int_0^1 r^2 H^4 \, dr + a^{ d\frac{d(\gamma)}{2}} \int_0^1 r^2 (H_\tau)^2 \, dr \right) \notag \\
&\lesssim \mathcal{S}^N (\kappa a^{-\frac{d(\gamma)}{2}} \| H \|_\infty^2 \int_0^1 r^2 H^2 \, dr + e^{-a_0 \tau} \mathcal{S}^N) \notag \\
&\lesssim \kappa a^{-\frac{d(\gamma)}{2}} \mathcal{S}^N + a^{\frac{d(\gamma)}{2}} \|H(0)\|_0^2+e^{-a_0 \tau} \mathcal{S}^N,
\end{align}
and therefore 
\begin{equation}
\left| \int_0^\tau \int_0^1 r^3 H^2 (D_r H)^2 H_\tau \, dr d \tau' \right| \lesssim \kappa \mathcal{S}^N  + \|H(0)\|_0^2 + \int_0^\tau e^{-a_0 \tau'} \mathcal{S}^N d \tau'.
\end{equation}

For integral $(ii)$ 
\begin{align}
| (ii) | &\lesssim \left| \int_0^1 r^3 \mathcal{D}_1 H H_\tau \, dr \right| + \left| \int_0^1 r H^2 H_\tau \, dr \right| + \left| \int_0^1 H^3 H_\tau \, dr \right| \notag \\
& \lesssim (\mathcal{S}^N)^{1/2} e^{-a_0 \tau} (\mathcal{S}^N)^{1/2} + \kappa a^{-\frac{d(\gamma)}{2}}  ( \| \tfrac{H}{r} \|_{\infty}^2+ \| \tfrac{H}{r} \|_{\infty}^4)  \int_0^1 H^2 r^2 \, dr + e^{-a_0 \tau} \mathcal{S}^N.
\end{align}
Therefore 
\begin{equation}
\left| \int_0^\tau (ii) d \tau' \right| \lesssim e^{-a_0 \tau} \mathcal{S}^N + \kappa \mathcal{S}^N + \| H(0) \|^2_0 +\int_0^\tau e^{-a_0 \tau'} \mathcal{S}^N d \tau'.
\end{equation}
Thus we have
\begin{equation}
\int_0^\tau | Z^0_{\mathcal{R}_2 5 7} | d \tau' \lesssim \kappa \mathcal{S}^N + \kappa \| H(0) \|^2_0 + \int_0^\tau e^{-a_0 \tau'} \mathcal{S}^N d \tau'.
\end{equation}
Recalling
\begin{align}
\mathcal{R}_3[\uptheta]&= r \xi^2 \gamma (\J^{-\gamma-1} - 1)(2 \xi (\uptheta_r)^2 r) -2 r \gamma \xi^2 (\J^{-\gamma-1} -1)\xi \xi_r (r \partial_r \uptheta + 3 \uptheta)  \notag \\
&- \partial_r (r \xi^2) [(\J^{-\gamma}-1) + \gamma (\J-1) + \gamma (\J^{-\gamma-1}-1) \xi^2 (r \partial_r \uptheta + 3 \uptheta)],
\end{align}
note for $i \geq 1$
\begin{align}
&Z^i_{\mathcal{R}_2 5 7} =a^{b(\gamma)} \int_0^1 \bar{\mathcal{D}}_{i-1}\left[ -\gamma r \partial_r (\xi^4) \J^{-\gamma-1} D_r H + \gamma \bar{\rho}^{\gamma-1} d \partial_r ( \tfrac{\xi^4}{\J^{\gamma+1}} ) (\partial_r D_r H) - \mathcal{R}_3[\uptheta] \right. \notag \\
&\left.-\tfrac{2}{r}(r \mathcal{R}_2[\tfrac{H}{4}]) -\partial_r [ r \mathcal{R}^b_2[\tfrac{H}{r}]]\right] \mathcal{D}_i H_\tau r^2 d^i \, dr \notag \\
&=a^{b(\gamma)} \int_0^1 \left( \bar{\mathcal{D}}_{i-1} [-\gamma r \partial_r (\xi^4) \J^{\gamma-1} D_r H ] + \bar{\mathcal{D}}_{i-1} [\gamma \bar{\rho}^{\gamma-1} d \partial_r ( \tfrac{\xi^4}{\J^{\gamma+1}} ) (\partial_r D_r H)] - \bar{\D}_{i-1} [ \mathcal{R}_3 [\uptheta]] \right. \notag \\
&\left. -\bar{\D}_{i-1} (2 \mathcal{R}_2 [\tfrac{H}{r}]) - \bar{\D}_{i-1} [\partial_r (r \mathcal{R}^b_2 (\tfrac{H}{r})] \right) \D_i H_\tau \, r^2 d^i \, dr \notag \\
&=\int_0^1 a^{b(\gamma)} \bar{\D}_{i-1} [ - \gamma r \partial_r (\xi^4) \J^{\gamma-1} D_r H ] \D_i H_\tau d^i r^2 \, dr \notag \\
&+\int_0^1 a^{b(\gamma)} \bar{\D}_{i-1} [ \gamma \bar{\rho}^{\gamma-1} d \partial_r ( \tfrac{\xi^4}{\J^{\gamma+1}} ) (\partial_r D_r H) ]  \bar{\D}_i d^i r^2 \, dr -\int_0^1 a^{b(\gamma)} \bar{\D}_{i-1} [ \R_3 [ \uptheta ] ] \D H_\tau d^i r^2 \, dr \notag \\
& - \int_0^1 a^{b(\gamma)} \bar{\D}_{i-1} ( 2 \R_2 [ \tfrac{H}{r} ] ) \bar{\D}_i d^i r^2 \, dr - \int_0^1 a^{b(\gamma)} \bar{\D}_{i-1} (\partial_r (r \R_2^b [ \tfrac{H}{r} ] ) ) \D_i H_\tau d^i r^2 \, dr \notag \\
&=(a)+(b)+(c)+(d)+(e).
\end{align}
For $(a)$ first rewrite
\begin{equation}
r \partial_r (\xi^4) = r 4 \xi^3 \xi_r = 4 \xi^3 ( r \partial_r ( \tfrac{H}{r} ) )  = 4 \xi^3 (D_r H - 3 \tfrac{H}{r}) = 4 \xi^3 D_r H - 12 \xi^3 \tfrac{H}{r}.
\end{equation}
By the product rule for $\PC_i,\bar{\PC}_i$ Lemma \ref{L:product} applied twice
\begin{equation}
\bar{\D}_{i-1} [ \xi^3 \J^{\gamma-1} D_r H D_r H ] = \sum_{ \substack{A_{1} \in \PC_{\ell+1}, A_2 \in \PC_{\ell_2+1}, \\ A_3 \in \bar{\PC}_{\ell_3}, A_4 \in \bar{\PC}_{\ell_4} \\ \ell_1 + \ell_2 + \ell_3 + \ell_4 = i-1 }} c^{A_1 A_2 A_3 A_4}_{\ell_1 \ell_2 \ell_3 \ell_4} A_1 (H) A_2(H) A_3 (\xi^3) A_4 (\J^{\gamma-1}).
\end{equation}
Now by the chain rule for $\bar{\PC}_i$ Lemma \ref{L:CHAINRULE}, for $A_3 \in \bar{\PC}_{\ell_3}$,
\begin{equation}
A_3 (\xi^3) = \sum_{k=1}^{\ell_3} \xi^{3-k} \sum_{i_1 + ... + i_k = \ell_3 \atop (A_3)_j \in \bar{\PC}_{i_j} } c_{k,i_1,...,i_k} \prod_{j=1}^k (A_3)_{j} \xi.
\end{equation}
Next note for $(A_3)_j \in \bar{\PC}_{i_j}$
\begin{equation}
(A_3)_j (\xi) = (A_3)_{j-1} \partial_r (\xi) = (A_3)_{j-1} \partial_r (\tfrac{H}{r}) = (A_3)_{j+1} (H)
\end{equation}
for some $(A_3)_{j-1} \in \PC_{i_j-1}$, $(A_3)_{j+1} \in \PC_{i_j+1}$. Similarly apply Lemma \ref{L:CHAINRULE} to obtain for $A_4 \in \bar{\PC}_{\ell_4}$
\begin{equation}
A_4 ( \J^{\gamma-1} ) = \sum_{k=1}^{\ell_4} \J^{\gamma-1-k} \sum_{i_1 + ... + i_k = \ell_4 \atop (A_4)_j \in \bar{\PC}_{i_j} } c_{k,i_1,...,i_k} \prod_{j=1}^k (A_4)_{j} \J.
\end{equation}
Now for $(A_4)_j \in \bar{\PC}_{i_j}$ 
\begin{equation}
(A_4)_j (\J)  = (A_4)_j ( \xi^3 + \xi^2 \xi_r r )= (A_4)_j ( \xi^3 ) + (A_4)_j (\xi^2 \xi_r r)
\end{equation}
We already know how to handle $(A_4)_j ( \xi^3 )$ and we can apply Lemma \ref{L:product} and Lemma \ref{L:CHAINRULE} to handle $(A_4)_j (\xi^2 \xi_r r)$ noting that
\begin{equation}
\xi_r r = r \partial_r ( \tfrac{H}{r} ) = D_r H -3 \tfrac{H}{r},
\end{equation}
and for some $\bar{A}_{i_j} \in \bar{\PC}_{i_j}$, $A_{i_j - 1} \in \PC_{i_j-1}$, $A_{i_j + 1} \in \PC_{i_j+1}$,
\begin{equation}
\bar{A}_{i_j} ( D_r H ) = A_{i_j - 1} \partial_r (D_r H) = A_{i_j + 1} (H).
\end{equation}
An analogous computation applies to $\bar{\D}_{i-1}  [ \xi^3 \J^{\gamma-1} \tfrac{H}{r} D_r H ] $ since $\tfrac{1}{r} \in \{D_r,\tfrac{1}{r} \}$, see the definition of $\PC_k$ for $k\geq 1$ (\ref{E:PEVEN})-(\ref{E:PODD}). 
Therefore for non-empty finite sets
\begin{align}
L&:=\{ \ell \in \mathbb{N}_{>0} : \lceil \tfrac{i}{4} \rceil \leq \ell \leq i \} \neq \emptyset \text { such that }  |L| < \infty \\
K(\ell) &:= \{ k \in  \mathbb{N}_0 : k \leq \lceil \tfrac{i-1}{2} \rceil +1 \} \text{ such that } 3 \leq |K(\ell)| < \infty.
\end{align}
we have 
\begin{equation}
\bar{\D}_{i-1} [ -\gamma r \partial_r ( \xi^4 ) \J^{\gamma-1} D_r H  ] = \sum_{\ell \in L \atop A_{\ell} \in \PC_\ell}  [ c_{\ell} A_{\ell} (H)  \prod_{k \in K(\ell) \atop A_k \in \PC_k } A_k (H) ],
\end{equation}
where $c_\ell$ are bounded coefficients on $[0,1]$ from above and below, $1 \lesssim |c_\ell| \lesssim 1$.
We can now estimate $(a)$
\begin{align}
|(a)|&=\left| \int_0^1 \bar{\D}_{i-1} [-\gamma r \partial_r (\xi^4) \J^{\gamma-1} D_r H ] \D_i H_\tau d^i r^2  \, dr \right|  \notag \\
&\lesssim \sum_{\ell \in L \atop A_{\ell} \in \PC_\ell} \prod_{k \in K(\ell) \atop A_k \in \PC_k } \| A_k (H) \|_{L^\infty} \left| \int_0^1 A_\ell (H) \D_i H_\tau d^i r^2 \, dr \right| \notag \\
& \lesssim \sum_{\ell \in L \atop A_{\ell} \in \PC_\ell} (\SC^N + \| H(0) \|^2_0)(\kappa a^{-\frac{d(\gamma)}{2}} \int_0^1 ( A_\ell (H) )^2 r^2 d^i \, dr +   a^{ d\frac{d(\gamma)}{2}}\int_0^1 (\D_i H_\tau)^2 r^2 d^i \, dr) \notag \\
& \lesssim e^{-a_0 \tau} \SC^N + \kappa (\SC^N + \| H(0) \|^2_0) (\SC^N + \| H(0) \|^2_0 ) \notag \\
& \lesssim e^{-a_0 \tau} \SC^N + \kappa \SC^N + \| H(0) \|^2_0.
\end{align}
Therefore
\begin{equation}
\int_0^\tau |(a)| d \tau' \lesssim \kappa \SC^N + \| H(0) \|^2_0 + \int_0^\tau e^{-a_0 \tau'} \SC^N d \tau'.
\end{equation}
For $(b)$ we have 
\begin{align}
\bar{\D}_{i-1} [ \bar{\rho}^{\gamma-1} d \partial_r ( \tfrac{\xi^4}{\J^{\gamma+1}} ) \partial_r D_r H ] &= \sum_{A_L \in \PC_L \atop 2 \leq L \leq i+1} c_L A_L(H) d \, ( \prod_{ A_k(L) \in \PC_{k(L)} \atop k(L) \leq \lfloor \tfrac{i-1}{2} \rfloor + 2 } A_{k(L)} (H) )   \notag \\
&+ \sum_{A_\ell \in \PC_\ell \atop 2 \leq \ell \leq i} c_\ell A_\ell(H) \prod_{ A_k(\ell) \in \PC_{k(\ell)} \atop k(\ell) \leq \lfloor \tfrac{i-1}{2} \rfloor + 2 } A_{k(\ell)} (H).
\end{align}

Then by a similar argument to $(a)$, we obtain the same bound.

For $(c)$, there are no troubling terms that are beyond similar methods to the methods of $(a)$ and $(b)$, so we obtain the same bound for $(c)$.

For $(d)$, an analogous argument gives the same bound.

For $(e)$, an analogous argument gives the same bound.

Therefore we have estimated $Z^i_{\R_2 5 7}$ for $i \geq 1$. Next we estimate
\begin{equation}
- \int_0^1 a^{b(\gamma)} \D_i [ r \R_1 [ \tfrac{H}{r} ] ] \D_i H_\tau r^2 d^i \, dr  := Z^i_{\R_1 5}.
\end{equation}
Note $\D_i = \bar{\D}_{i-1} D_r = \bar{\D}_{i-1} ( \partial_r + \tfrac{2}{r} ).$ Then there are no troubling terms that are beyond similar methods to the methods of $(a)$ and $(b)$ from $Z^i_{
\R_2 5 7}$ for $i \geq 1$, and methods similar to previous estimates, and therefore we obtain the same bound as for $(a)$ and $(b)$ from $Z^i_{\R_2 5 7}$ for $Z^i_{\R_1 5}.$

$\underline{j=6:}$ Note using the definition of $q_{ij}$ (\ref{E:QIJDEFN}) and the commutator identity Lemma \ref{L:COMMUTATORIDENTITY}
\begin{align}
C_i[H]  &= \frac{\xi^4}{\J^{\gamma+1}} q_{ij} \D_{i-j} H + [\bar{\D}_{i-1},\tfrac{\xi^4}{\J^{\gamma+1}} ] D_r L_0 H \notag \\
&= \frac{\xi^4}{\J^{\gamma+1}} \sum_{j=0}^{i-1} \left( \left( \sum_{k=1}^{2+j} \frac{\sum_{\ell=0}^k c \, \partial_r^\ell d}{r^{2+j-k}} \right) \D_{i-j} H \right) + (i-1) \partial_r ( \tfrac{\xi^4}{\J^{\gamma+1}} ) \bar{\D}_{i-2} (D_r L_0 H) \notag \\
&+ \sum_{2 \leq k \leq i \atop A \in \bar{\PC}_k, B \in \bar{\PC}_{i-k} } c \, A(\tfrac{\xi^4}{\J^{\gamma+1}}) B(D_r L_0 H).
\end{align}
Then note 
\begin{align}
& D_r L_0 H = D_r ( \tfrac{1}{\bar{\rho}} \partial_r (\bar{\rho}^\gamma d D_r H ) ) = D_r ( \tfrac{1}{\bar{\rho}} ( \partial_r ( \bar{\rho}^{\gamma} d ) D_r H + \bar{\rho}^\gamma d (\partial_r D_r H) ) \notag \\
&=D_r ( \tfrac{ \partial_r (\bar{\rho}^{\gamma} d )}{\bar{\rho}}  D_r H + \bar{\rho}^{\gamma-1} d ( \partial_r D_r H ) ) \notag \\
& =  D_r ( \tfrac{ \partial_r (\bar{\rho}^{\gamma} d) }{\bar{\rho}} )  D_r H + \tfrac{ \partial_r (\bar{\rho}^\gamma d)}{\bar{\rho}} ( \partial_r D_r H ) + \partial_r (\bar{\rho}^{\gamma-1} d ) \partial_r D_r H + \bar{\rho}^{\gamma-1} d ( D_r \partial_r D_r H). 
\end{align}
Therefore we can write $Z^i_6$ in a form to which can use analogous methods to the methods of $(a)$ and $(b)$ from $Z^i_{\R_2 5 7}$ for $i \geq 1$, and methods similar to previous estimates, and therefore we obtain the same bound as for $(a)$ and $(b)$ from $Z^i_{\R_2 5 7}$ for $Z^i_6$.

This completes the energy estimate.
\end{proof}

\section{Proof of the Main Theorem}\label{S:PROOFOFMAINTHEOREM}
Applying the Local Well-Posedness Theorem \ref{T:LWP} we have that on some time interval $[0,T], T>0$ there exists a unique solution to (\ref{E:HEQN}). By Proposition \ref{P:ENERGYESTIMATEGAMMALEQ5OVER3} with $\kappa>0$ chosen small enough and using the norm-modified energy equivalence Lemma \ref{L:NORMENERGYGAMMAGREATER5OVER3}, we obtain that there are universal constants $c_1,c_2,c_3,c_4\geq 1$ such that for any $0 \leq \tau^* \leq \tau \leq T$
\begin{equation}\label{E:ENERGYBOUND}
\mathcal{S}^N(\tau;\tau^*) \leq c_1 \mathcal{S}^N(\tau^*)+c_2 \lambda +c_3 \mathcal{S}^N(0) + c_4 \int_{\tau^*}^\tau e^{-a_0 \tau'} \mathcal{S}^N (\tau';\tau^*) d \tau'.
\end{equation}
Above $\mathcal{S}^N(\tau;\tau^*)$ denotes $\mathcal{S}^N$ with $\sup_{\tau^*\le \tau' \le\tau}$ instead of $\sup_{0\le \tau' \le\tau}$. Applying a standard well-posedness estimate, we have that the time of existence $T$ is inversely proportional to the size of the initial data, that is: $T \sim (\mathcal{S}^N(0))^{-1}$. Choose $\varepsilon>0$ so small that the time of existence $T$ satisfies 
\begin{equation}\label{E:LOCALTIME}
e^{-a_0 T/4} \leq \frac{\kappa a_0}{c_4}, \ \ \sup_{\tau \leq T}\mathcal{S}^N(\tau) \leq c\left(\mathcal{S}^N(0)+\lambda \right)
\end{equation}
where $c>0$ is a universal constant from the local well-posedness theory. Then let
\begin{equation}
C_\ast= 3(c_1 c + c_2 + c_3).
\end{equation}
Now define
\begin{equation}\label{E:CURLTDEFINITION}
\mathcal T : = \sup_{\tau\geq0}\{\text{ solution to (\ref{E:HEQN}) exists on $[0,\tau)$ and} \ \mathcal{S}^N(\tau)\leq  C_\ast \left(\mathcal{S}^N(0)+\lambda\right)\} . 
\end{equation}
Notice that $\mathcal T\geq T$ since $c \leq C_\ast$. Setting $\tau^*=\frac T2$ in (\ref{E:ENERGYBOUND}) for any $\tau\in[\frac T2,\mathcal T]$ we have
\begin{equation}
\mathcal{S}^N(\tau; \frac T2) \leq c_1 \mathcal{S}^N(\frac T2)+c_2 \lambda + c_3 \mathcal{S}^N(0) +  c_4 \int_{\frac T2}^\tau e^{-\frac{a_0}{2}\tau'}\mathcal{S}^N(\tau';\frac{T}{2})\,d\tau'.
\end{equation}
Therefore, applying (\ref{E:LOCALTIME}) we deduce that for any $\tau\in[\frac T2,\mathcal T]$
\begin{align}
\mathcal{S}^N(\tau; \frac T2) & \leq  c_1\mathcal{S}^N(\frac T2)+c_2 \lambda+c_3 \mathcal{S}^N(0)   + \frac{c_4}{a_0}e^{-a_0 T/4}\mathcal{S}^N(\tau;\frac{T}{2}) \notag \\
& \le c_1\mathcal{S}^N(\frac T2)+c_2 \lambda+c_3 \mathcal{S}^N(0) +\kappa\mathcal{S}^N(\tau;\frac{T}{2}). \label{E:CONT1}
\end{align}
By (\ref{E:LOCALTIME}), $\mathcal{S}^N(\frac T2)\leq c\left(\mathcal{S}^N(0)+\lambda \right)$ and so from (\ref{E:CONT1})
\begin{equation}
\mathcal{S}^N(\tau; \frac T2) \leq c_1 c \left( \mathcal{S}^N(0)+ \lambda \right) + c_2 \lambda+c_3\mathcal{S}^N(0) +\kappa\mathcal{S}^N(\tau;\frac{T}{2}).
\end{equation}
For $\kappa$ sufficiently small this gives
\begin{equation}
\mathcal{S}^N(\tau; \frac T2) \leq 2(c_1 c + c_2 + c_3) \left( \mathcal{S}^N(0) + \lambda \right) < C_\ast \left(\mathcal{S}^N(0) + \lambda\right),
\end{equation}
and thus
\begin{equation}\label{E:SNCSTARBOUND}
\mathcal{S}^N(\tau)< C_\ast \left(\mathcal{S}^N(0)+ \lambda\right).
\end{equation}
It is now straightforward to verify the a priori bounds (\ref{E:SNAPRIORI})-(\ref{E:DOUBLEPARTIALRTHETAAPRIORI}) are in fact improved. For example, by the fundamental theorem of calculus, with $A \in \PC_2$
\begin{equation}
| \partial_r \left(\frac{H}{r} \right) | = | \int_0^\tau A H_\tau | \lesssim \int_0^\tau e^{-a_0\tau'} \mathcal{S}^N(\tau')\,d\tau' \lesssim \varepsilon <\frac16, \ \ \tau\in[0,\mathcal T) 
\end{equation}
for $\varepsilon>0$ small enough. Analogous arguments apply to the remaining a priori assumptions. Recalling the definition of $\mathcal T$ \ref{E:CURLTDEFINITION} and by the continuity of the map $\tau\mapsto \mathcal{S}^N(\tau')$, we conclude that $\mathcal T = \infty$ and the solution to (\ref{E:HEQN}) exists globally-in-time. Moreover the global bound (\ref{E:GLOBALBOUND}) follows.

\section*{Acknowledgments}
C. Rickard acknowledges the support of the NSF grant DMS-1608494 and the NSF grant DMS-1613135.

\appendix
\renewcommand{\theequation}{\Alph{section}.\arabic{equation}}
\setcounter{theorem}{0}\renewcommand{\theorem}{\Alph{section}.\??arabic{prop}}

\section{Differential Operators}
We give a series of useful results concerning our differential operators $\D_i$ and vector fields $\PC_i$. First we have the product rule for $\D_i$ which is straightforward to prove using induction.
\begin{lemma}\label{L:PRODUCTRULEDI}
For any $i \geq  1$ the following holds
\begin{equation}
\D_i (fg) = (\D_i f) g + \bar{\D}_{i-1} (f \partial_r g) + \bar{\D}_{i-1} (g D_r f) + g (\bar{\D}_{i-1} D_r f).
\end{equation}
\end{lemma}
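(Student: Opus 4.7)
The natural strategy is induction on $i \geq 1$, exploiting the one-step recursive structure of the family $\{\mathcal{D}_i\}$: inspection of the definitions in Section~\ref{S:NOTATION} shows that $\mathcal{D}_{i+1} = D_r \mathcal{D}_i$ when $i$ is even and $\mathcal{D}_{i+1} = \partial_r \mathcal{D}_i$ when $i$ is odd, while $\bar{\mathcal{D}}_i = \mathcal{D}_{i-1}\partial_r$ for every $i \geq 1$. These two identities, which follow directly from the definitions of $\mathcal{D}_i$ and $\bar{\mathcal{D}}_i$, will be the engine of the induction.

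For the base case $i=1$, the claim collapses to the elementary product rule for $D_r = \partial_r + \tfrac{2}{r}$, namely $D_r(fg) = g\, D_r f + f\, \partial_r g$, which the right-hand side of the claim reproduces once the $\bar{\mathcal{D}}_0$-terms are simplified via $\bar{\mathcal{D}}_0 = 1$.

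For the inductive step, assuming the identity at level $i$, I would apply to both sides whichever of $D_r$ or $\partial_r$ realizes $\mathcal{D}_{i+1}$ from $\mathcal{D}_i$. Each of the four terms on the right is a product of two expressions, to which the primitive Leibniz rules
\[
D_r(uv) = v\, D_r u + u\, \partial_r v, \qquad \partial_r(uv) = u\, \partial_r v + v\, \partial_r u
\]
apply directly. The redistributed pieces are then repackaged via $\bar{\mathcal{D}}_{i} = \mathcal{D}_{i-1}\partial_r$ into the canonical four-term shape with $i$ replaced by $i+1$: the $(\mathcal{D}_{i+1}f)g$ contribution arises from the outer derivative acting on $\mathcal{D}_i f$ in the first term, while the propagation of the new derivative into the inner factors of the remaining three terms produces the successor $\bar{\mathcal{D}}_i$-terms.

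The main organizational obstacle is parity bookkeeping: at each step the outer operator realizing $\mathcal{D}_{i+1}$ from $\mathcal{D}_i$ alternates between $D_r$ and $\partial_r$, so the induction naturally splits into two parallel cases. A clean way to avoid duplicating work is to handle them simultaneously via the auxiliary identity $\bar{\mathcal{D}}_i = \mathcal{D}_{i-1}\partial_r$, which lets $\partial_r$ pass uniformly through the outer chain regardless of parity. Beyond this bookkeeping the calculation is entirely mechanical and I anticipate no analytical difficulty.
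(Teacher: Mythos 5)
Your induction strategy is the same one the paper intends (it gives no details, stating only that the identity is ``straightforward to prove using induction''), and the parity bookkeeping you flag is indeed the organizational crux. But two of the concrete assertions in your proposal are wrong, and the first one matters: you claim to have verified the base case, yet the verification fails as stated. Setting $\bar{\mathcal{D}}_0 = 1$ in the right-hand side of the displayed identity gives
\begin{equation*}
(D_r f)g + f\,\partial_r g + g\,D_r f + g\,D_r f = 3\,g\,D_r f + f\,\partial_r g,
\end{equation*}
which overshoots $D_r(fg) = g\,D_r f + f\,\partial_r g$ by $2g\,D_r f$. The last term in the lemma must carry a minus sign,
\begin{equation*}
\mathcal{D}_i(fg) = (\mathcal{D}_i f)\,g + \bar{\mathcal{D}}_{i-1}(f\,\partial_r g) + \bar{\mathcal{D}}_{i-1}(g\,D_r f) - g\,(\bar{\mathcal{D}}_{i-1} D_r f),
\end{equation*}
and this is exactly how the lemma is used in (\ref{E:APPLYPRODUCTRULE}), where that term appears with a minus. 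The plus in the lemma's display is a typo, and a careful base-case check would have exposed it.

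Second, the inductive mechanism is mischaracterized. Only the first and (corrected) fourth terms are genuinely products to which Leibniz applies; the middle two are $\bar{\mathcal{D}}_{i-1}$ of a product, and there you should not re-expand but rather absorb the incoming outer derivative via $\partial_r\bar{\mathcal{D}}_{i-1} = \bar{\mathcal{D}}_i$ (odd $i$) or $D_r\bar{\mathcal{D}}_{i-1} = \bar{\mathcal{D}}_i$ (even $i$). More importantly, the cancellation that closes the step rests on the unstated operator identity $\bar{\mathcal{D}}_{i-1}D_r = \mathcal{D}_i$, valid for all $i \geq 1$. Differentiating the first term produces $(\mathcal{D}_{i+1}f)g$ plus a surplus $(\mathcal{D}_i f)\partial_r g$; differentiating the subtracted fourth term produces $-(\partial_r g)(\bar{\mathcal{D}}_{i-1}D_r f) - g\,\partial_r(\bar{\mathcal{D}}_{i-1}D_r f)$ in the odd case (analogously with $D_r g$ when $i$ is even). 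It is precisely $\bar{\mathcal{D}}_{i-1}D_r = \mathcal{D}_i$ that makes the first pieces cancel and the remainder collapse to $-g\,(\bar{\mathcal{D}}_i D_r f)$; without it in hand, the ``entirely mechanical'' conclusion you expect does not materialize. The road map is right; the details you supplied to execute it are not.
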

Next we have a commutation rule for $D_i L_0$ which also follows by induction, in a similar fashion to Lemma B.1 \cite{guo2018continued}
\begin{lemma}\label{L:DIL0COMMUTATIONRESULT}
For any $i \in \mathbb{Z}_{>0}$
\begin{equation}\label{E:CFORMULA}
\mathcal{D}_i L_0 X = \mathcal{L}_i \mathcal{D}_i X + \sum_{j=0}^{i-1} q_{ij} \mathcal{D}_{i-j} X
\end{equation}
where
\begin{equation}\label{E:QIJDEFN}
q_{ij}=\sum_{k=1}^{2+j} \frac{ \sum_{\ell =0}^k c_{i j k \ell} \partial_r^\ell d }{r^{2+j-k}},
\end{equation}
and $c_{i j k \ell}$ are bounded functions on $[0,1]$.
\end{lemma}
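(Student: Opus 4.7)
The plan is to prove this by induction on $i$, built on the two commutation identities already derived earlier in the paper,
\begin{equation*}
D_r L_k f = L_{k+1}^\ast D_r f + Q_+^{(k)} D_r f, \qquad \partial_r L_k^\ast h = L_{k+1} \partial_r h + Q_-^{(k)} \partial_r h,
\end{equation*}
with $Q_\pm^{(k)}$ the explicit rational functions in $\bar\rho$, $d$ and their first two derivatives (together with $1/r$ factors from the $\tfrac{2}{r}$ part of $D_r$) written out immediately before (\ref{E:RETURNTOHEQNAPPLYDI}). The key structural fact is that $Q_\pm^{(k)}$ already has the shape of $q_{ij}$ at $j=0$: bounded-in-$r$ coefficients times $d/r$, $d_r/r$, $d$, $d_r$, or $d_{rr}$. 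Boundedness on $[0,1]$ of all accrued coefficients follows from the hypothesis $\bar\rho>0$ together with the $C^k$-regularity of $\bar\rho$ and $d$ coming from (\ref{E:PHIDEMAND}) and (\ref{E:EXPLICITFORMFORD}).

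For the base case $i=1$, simply set $q_{1,0}=Q_+^{(0)}$ and read off the required form ($k\in\{1,2\}$, $\ell\leq k$). For the inductive step, assume the identity holds at level $i$ and apply $D_r$ on the left when $i$ is even (since $\mathcal D_{i+1}=D_r\mathcal D_i$) or $\partial_r$ on the left when $i$ is odd. The leading term $\mathcal L_i\mathcal D_i X$ is promoted to $\mathcal L_{i+1}\mathcal D_{i+1} X + Q_\pm^{(i)}\mathcal D_{i+1} X$ by the appropriate commutation identity; the remainder $Q_\pm^{(i)}\mathcal D_{i+1}X$ supplies the new $q_{i+1,0}$ term. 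Each subleading $q_{ij}\mathcal D_{i-j}X$ is expanded by the Leibniz rule: the piece $(Dq_{ij})\mathcal D_{i-j}X$ contributes at new index $j'=j+1$, while $q_{ij}\partial_r\mathcal D_{i-j}X$ yields $q_{ij}\mathcal D_{i-j+1}X$ (contributing at $j'=j$) plus, when the parities of $\mathcal D_{i-j}$ and the differentiation operator disagree, a correction $\pm\tfrac{2}{r}q_{ij}\mathcal D_{i-j}X$ (contributing at $j'=j+1$) via $\partial_r = D_r - \tfrac{2}{r}$.

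The main obstacle is the indexing bookkeeping — verifying that every contribution above lands inside the prescribed envelope $k'\in\{1,\dots,j'+2\}$ with $\ell'\leq k'$ for $q_{i+1,j'}$. This reduces to checking a single generic monomial $r^{k-j-2}\partial_r^\ell d$: differentiating once either raises the $r$-singularity by one unit (leaving the $d$-derivative count fixed) or raises the derivative count on $d$ by one (leaving the $r$-power fixed), so $(k,\ell)$ each shift by at most one, exactly matching the one-step enlargement of the envelope when $j$ advances to $j+1$. Boundedness of the new coefficients on $[0,1]$ is inherited from the inductive hypothesis together with the smoothness and strict positivity of $\bar\rho$, which controls all negative powers of $\bar\rho$. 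This tracks the strategy of Lemma B.1 in \cite{guo2018continued}.
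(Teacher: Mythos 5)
Your proposal is correct and follows exactly the strategy the paper intends: the paper itself does not write out a proof, stating only that the identity ``follows by induction, in a similar fashion to Lemma B.1 \cite{guo2018continued},'' and your induction using the $D_r L_k = L_{k+1}^\ast D_r + Q_+ D_r$ and $\partial_r L_k^\ast = L_{k+1}\partial_r + Q_-\partial_r$ commutation identities (with a parity check to decide whether $\mathcal D_{i+1}=D_r\mathcal D_i$ or $\partial_r\mathcal D_i$, and a $\pm\tfrac{2}{r}$ correction via $\partial_r = D_r - \tfrac{2}{r}$ when the parity of $\mathcal D_{i-j}$ forces it) is precisely that argument. One minor imprecision in the bookkeeping paragraph: differentiating a monomial $c(r)\,\partial_r^\ell d\, / r^{2+j-k}$ of $q_{ij}$ actually produces three pieces, not two --- the $c'\,\partial_r^\ell d/r^{2+j-k}$ piece where only the prefactor is hit (so neither the $r$-power nor the $d$-derivative count moves) in addition to the two shifts you describe --- but all three land within the enlarged envelope for $j'=j+1$ (take $k'=k+1$, $\ell'=\ell$ for the first), so the conclusion is unaffected.
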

Next for the vector fields $\PC_i$ and $\bar{\PC}_i$, we give the product rule and chain rule from Lemma A.4 \cite{guo2018continued}  and Lemma A.5 \cite{guo2018continued} respectively.
\begin{lemma}\label{L:product}  
Let $i\in\mathbb N$ be given. 
\begin{enumerate}
\item[(a)]
For any $A\in \mathcal P_i$ the following identity holds:
\begin{equation}\label{E:product}
A \left( f g \right)= \sum_{k=0}^i \sum_{ B \in \mathcal P_k \atop C\in \bar{\mathcal P}_{i-k}} c^{ABC}_{k} \ Bf\, Cg,
\end{equation}
for some real-valued constants $c^{ABC}_{k}$.
\item[(b)]
For any $A\in \bar{\mathcal P}_i$ the following identity holds:
\begin{equation}\label{E:PRODUCTRULEDBAR}
A \left( f g \right)= \sum_{k=0}^i \sum_{ B \in \bar{\mathcal P}_k \atop C\in \bar{\mathcal P}_{i-k}} {\bar c}^{ABC}_{k} \ Bf\, Cg,
\end{equation}
for some real-valued constants ${\bar c}^{ABC}_{k}$.
\end{enumerate}
\end{lemma}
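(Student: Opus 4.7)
I prove (a) and (b) simultaneously by induction on $i$. The case $i=0$ is trivial since $\mathcal{P}_0 = \bar{\mathcal{P}}_0 = \{1\}$, and the case $i=1$ is immediate from the elementary Leibniz identities
\begin{align*}
\partial_r(fg) &= (\partial_r f)g + f(\partial_r g),\\
D_r(fg) &= (D_r f)g + f(\partial_r g),\\
(1/r)(fg) &= ((1/r)f)g,
\end{align*}
which cover $\bar{\mathcal{P}}_1 = \{\partial_r\}$ and $\mathcal{P}_1 = \{D_r, 1/r\}$.

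For the inductive step at level $i$, I use the recursive structure inherited from the definitions: every $A \in \mathcal{P}_{2j+2}$ factors as $A = \partial_r \circ A'$ with $A' \in \mathcal{P}_{2j+1}$; every $A \in \mathcal{P}_{2j+1}$ factors as $A = V \circ A'$ with $V \in \{D_r, 1/r\}$ and $A' \in \mathcal{P}_{2j}$; and every $A \in \bar{\mathcal{P}}_i$ factors as $A = W \circ \partial_r$ with $W \in \mathcal{P}_{i-1}$. In the first two cases I apply the induction hypothesis (a) at level $i-1$ to expand $A'(fg)$; in the third, I first use $\partial_r(fg) = (\partial_r f)g + f(\partial_r g)$ and then apply (a) at level $i-1$ to $W$ acting on each of the two resulting summands. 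In every case the outermost atom $E \in \{\partial_r, D_r, 1/r\}$ is then distributed across each product $(B'f)(C'g)$ via the Leibniz rule appropriate to $E$, producing pieces of the form $((E\circ B')f)(C'g)$ and, unless $E = 1/r$, $(B'f)((\partial_r\circ C')g)$.

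The main bookkeeping task is to show that every compound operator $E \circ B'$ or $\partial_r \circ C'$ arising in this way can be rewritten as a finite integer linear combination of elements of the target class ($\mathcal{P}_{k+1}$ for the $B'$-side of (a), $\bar{\mathcal{P}}_{i-k}$ for the $C'$-side, and $\bar{\mathcal{P}}$ on both sides for (b)), so that the constants $c^{ABC}_k$ and $\bar c^{ABC}_k$ remain real. In the aligned cases this is immediate from the definitions, e.g.\ $\partial_r$ prepended to an operator whose outermost atom is $V$ produces the next $\mathcal{P}_{\text{even}}$ element. The residual cases are resolved by the algebraic identities $\partial_r = D_r - 2/r$, $\partial_r\circ(1/r) = (1/r)\partial_r - 1/r^2$, and the consequence $1/r^2 = (1/r)\partial_r - \partial_r(1/r) \in \bar{\mathcal{P}}_2 + \mathcal{P}_2$, together with their analogues at higher levels obtained by composition. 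Each of these identities preserves the total atom count and uses only integer coefficients, so the inductive step closes.

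The principal difficulty is the systematic case analysis: for each of the four parity combinations (parity of $i$ combined with the choice of $\mathcal{P}$ vs $\bar{\mathcal{P}}$), I must trace through each possible shape of $B'$ and $C'$ produced by the induction hypothesis and verify that $E\circ B'$ and $\partial_r\circ C'$ admit the normal-form decomposition into $\mathcal{P}_k \cup \bar{\mathcal{P}}_k$ with the correct index, possibly after a single application of the conversion identities above. This is mechanical but intricate, and closely mirrors the corresponding verification carried out for the analogous result in \cite{guo2018continued}.
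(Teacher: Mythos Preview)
The paper does not supply a proof of this lemma; it merely states it and cites Lemma~A.4 of \cite{guo2018continued}. Your inductive outline is the natural approach and is essentially the argument given in that reference: peel off the leftmost atom of $A$ via the recursive structure of $\mathcal P_i$ and $\bar{\mathcal P}_i$, apply the induction hypothesis at level $i-1$, redistribute the atom across each product $(B'f)(C'g)$ using the elementary Leibniz rules, and repair parity mismatches with the conversion identities $\partial_r = D_r - 2/r$ and $[\partial_r,1/r]=-1/r^2$.

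One small point worth making explicit in the ``mechanical but intricate'' case analysis you allude to: when a stray factor of $1/r$ produces an operator of the wrong type on the $B$-side (for instance an element of $\bar{\mathcal P}_{k+1}$ where part~(a) requires $\mathcal P_{k+1}$, as happens when $i$ is odd, $V=1/r$, and $k$ is odd), one exploits that $1/r$ is pure multiplication and may therefore be transferred freely from $(Bf)$ to $(Cg)$; on the $C$-side the parity then works out and the term lands in $\bar{\mathcal P}_{i-k}$ as needed. Your decomposition $1/r^2\in \bar{\mathcal P}_2+\mathcal P_2$ hints at this, but the transfer trick is what actually closes the induction. With that understood, your sketch is correct.
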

\begin{lemma}\label{L:CHAINRULE}
Let $a\in\mathbb R, i\in\mathbb N$ be given and fix a vectorfield $W\in\bar{\mathcal P}_i$.  Then for any sufficiently smooth $f$ the following identity holds
\begin{align}\label{E:CR}
W(f^a) = \sum_{k=1}^i f^{a-k} \sum_{i_1+\dots i_k=i \atop W_j\in \bar{\mathcal P}_{i_j}} c_{k,i_1,\dots,i_k} \prod_{j=1}^k W_j f
\end{align}
for some real constants $c_{k,i_1,\dots,i_k}$.
\end{lemma}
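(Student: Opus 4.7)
The plan is to prove this chain rule by strong induction on $i\geq 1$. The base case $i=1$ is immediate: $\bar{\mathcal P}_1=\{\partial_r\}$, and $\partial_r(f^a)=af^{a-1}\partial_r f$ matches the claimed form with $k=1$, $i_1=1$, and $c_{1,1}=a$. For the inductive step, a key structural observation is that every $W\in\bar{\mathcal P}_i$ with $i\geq 2$ admits a decomposition $W=Z\circ\tilde W$ with $\tilde W\in\bar{\mathcal P}_{i-1}$, where the leading factor $Z$ equals $\partial_r$ when $i$ is odd and lies in $\{D_r,1/r\}$ when $i$ is even; this simply peels off the leftmost entry in the alternating prefix structure inherited from the definitions $\bar{\mathcal P}_{2j+1}=\partial_r V_j\partial_r\cdots V_1\partial_r$ and $\bar{\mathcal P}_{2j+2}=V_{j+1}\partial_r\cdots V_1\partial_r$. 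Applying the inductive hypothesis to $\tilde W(f^a)$ expands it as $\sum_{k=1}^{i-1}f^{a-k}\sum c \prod_\ell W_\ell f$ with $W_\ell\in\bar{\mathcal P}_{i_\ell}$ and $\sum_\ell i_\ell=i-1$, so it remains to apply $Z$ and repackage the outcome in the required form.

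The central technical tool would be an absorption identity: for each $W_\ell\in\bar{\mathcal P}_{i_\ell}$, both $\partial_r W_\ell$ and the compositions $D_r W_\ell$, $(1/r)W_\ell$ belong to the $\mathbb Z$-linear span of $\bar{\mathcal P}_{i_\ell+1}$, subject to parity caveats. Concretely, if $i_\ell$ is even then $W_\ell$ begins with some $V\in\{D_r,1/r\}$ and pre-pending $\partial_r$ produces $\partial_r V\cdots$, which is exactly an element of $\bar{\mathcal P}_{i_\ell+1}$; while if $i_\ell$ is odd then $W_\ell$ begins with $\partial_r$, so both $D_r W_\ell$ and $(1/r)W_\ell$ already lie directly in $\bar{\mathcal P}_{i_\ell+1}$, and the otherwise problematic $\partial_r W_\ell$ is handled by rewriting $\partial_r^2=D_r\partial_r-2(1/r)\partial_r$ to give $\partial_r W_\ell = D_r W_\ell - 2(1/r)W_\ell$. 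Equipped with this, the $Z=\partial_r$ sub-case (for $i$ odd) is straightforward: the Leibniz rule applied to $\partial_r[f^{a-k}\prod_\ell W_\ell f]$ places the derivative either on $f^{a-k}$, producing $(a-k)f^{a-k-1}(\partial_r f)\prod_\ell W_\ell f$ of the correct form with $\partial_r\in\bar{\mathcal P}_1$, or on some factor $W_s f$, producing $(\partial_r W_s)f$ which is absorbed by the above identity into $\bar{\mathcal P}_{i_s+1}$.

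The main obstacle is the $Z=1/r$ sub-case (which also appears through the $2/r$ piece of $Z=D_r=\partial_r+2/r$): multiplying $f^{a-k}\prod_\ell W_\ell f$ by $1/r$ must be re-expressed as $f^{a-k}\prod_\ell W'_\ell f$ with $W'_\ell\in\bar{\mathcal P}$ and the new tuple summing to $i$. The absorption $(1/r)W_\ell\in\bar{\mathcal P}_{i_\ell+1}$ is available only when $i_\ell$ is odd, so the pivotal parity argument is that in this sub-case $\sum_\ell i_\ell=i-1$ is odd (because $i$ is even), which forces at least one $i_\ell$ in every summand tuple to be odd; the factor $1/r$ is then attached to a chosen such $W_\ell$, yielding a legitimate $W'_\ell:=(1/r)W_\ell\in\bar{\mathcal P}_{i_\ell+1}$ while leaving the remaining factors unchanged. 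Collecting constants across all resulting terms produces the claimed identity for $W\in\bar{\mathcal P}_i$, closing the induction.
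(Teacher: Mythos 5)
Your proof is correct. Note first that the paper does not actually prove this lemma: it is quoted verbatim from Lemma A.5 of \cite{guo2018continued}, so there is no in-paper argument to compare against; your induction is a valid self-contained proof. The three ingredients you use all check out against the definitions \eqref{E:PEVEN}--\eqref{E:PODD}: every $W\in\bar{\mathcal P}_i$, $i\ge 2$, does factor as $Z\tilde W$ with $\tilde W\in\bar{\mathcal P}_{i-1}$ and $Z=\partial_r$ ($i$ odd) or $Z\in\{D_r,\tfrac1r\}$ ($i$ even); the absorption identities are right, since odd-index elements begin with $\partial_r$ (so prepending $D_r$ or $\tfrac1r$ lands in $\bar{\mathcal P}_{i_\ell+1}$, and $\partial_r^2=D_r\partial_r-\tfrac2r\partial_r$ handles the remaining case) while even-index elements begin with some $V\in\{D_r,\tfrac1r\}$ (so prepending $\partial_r$ lands in $\bar{\mathcal P}_{i_\ell+1}$); and the parity argument for the $\tfrac1r$ sub-case is the genuinely clever step --- since $\sum_\ell i_\ell=i-1$ is odd when $i$ is even, some $i_\ell$ is odd and can receive the $\tfrac1r$. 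The only thing worth making explicit is the invariant that every tuple produced has all $i_\ell\ge 1$ (true in the base case and preserved by each step, since you only ever increment an index or append a new factor $\partial_r f$ with index $1$); this is what guarantees both that $k\le i$ after the Leibniz step and that the parity argument cannot be defeated by a factor $W_\ell\in\bar{\mathcal P}_0$, to which $\tfrac1r$ could not be attached. With that remark added, the argument is complete; a sanity check on $W=D_r\partial_r\in\bar{\mathcal P}_2$ gives $W(f^a)=a(a-1)f^{a-2}(\partial_r f)^2+af^{a-1}D_r\partial_r f$, exactly as your procedure predicts.
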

Next we give a useful commutator identity from Lemma B.2 \cite{guo2018continued}
\begin{lemma}\label{L:COMMUTATORIDENTITY}
For any $i \in \mathbb{Z}_{>0}$
\begin{equation}
[\bar{\D}_i, e]X  = 
 i \partial_r e {\bar\D}_{i-1} X
+ \sum_{2 \leq k\leq i \atop A\in \bar{\mathcal P}_{k}, B\in \bar{\mathcal P}_{i-k}} c \, (A e) (B X).
\end{equation}
\end{lemma}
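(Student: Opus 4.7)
The plan is to argue by induction on $i$, mirroring the strategy of Lemma B.2 in \cite{guo2018continued}. The key structural inputs are: (i) both $\partial_r$ and $D_r$ satisfy the same first-order Leibniz identity $W(fg) = (\partial_r f)g + f\,Wg$ (for $D_r$ this follows from $D_r(fg) = \frac{1}{r^2}(r^2 fg)' = \frac{2fg}{r}+\partial_r(fg) = (\partial_r f) g + f D_r g$), and (ii) by the very definition of $\bar{\mathcal D}_i = \mathcal D_{i-1}\partial_r$ we have the one-step decomposition $\bar{\mathcal D}_{i+1} = W\,\bar{\mathcal D}_i$ with $W=D_r$ when $i+1$ is even and $W=\partial_r$ when $i+1$ is odd.

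The base case $i=1$ is immediate: $\bar{\mathcal D}_1 = \partial_r$, so $[\partial_r,e]X = (\partial_r e)X$, which matches the claim since the $k\ge 2$ sum is empty and $\bar{\mathcal D}_0 X = X$. For the inductive step I would apply Leibniz to
\begin{equation*}
[\bar{\mathcal D}_{i+1},e]X = W\bar{\mathcal D}_i(eX) - e\,W\bar{\mathcal D}_i X = (\partial_r e)\,\bar{\mathcal D}_i X + W\big([\bar{\mathcal D}_i,e]X\big),
\end{equation*}
then plug in the inductive formula for $[\bar{\mathcal D}_i,e]X$ and apply $W$ term by term. This yields
\begin{equation*}
[\bar{\mathcal D}_{i+1},e]X = (\partial_r e)\bar{\mathcal D}_i X + i(\partial_r^2 e)\bar{\mathcal D}_{i-1}X + i(\partial_r e)\,W\bar{\mathcal D}_{i-1}X + \sum c\,(\partial_r A e)(BX) + \sum c\,(Ae)(WBX),
\end{equation*}
with $A\in\bar{\mathcal P}_k$, $B\in\bar{\mathcal P}_{i-k}$, $2\le k\le i$.

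The heart of the argument is then a structural closure check for the vector-field classes $\bar{\mathcal P}_k$. I would verify by induction on the length of the operator (using the product/chain rules of Lemmas \ref{L:product}, \ref{L:CHAINRULE} and the identity $[\partial_r,D_r]=-\frac{2}{r^2}$) the following two claims: for $W\in\{\partial_r,D_r\}$ and $A\in\bar{\mathcal P}_k$, the composition $WA$ (resp.\ $AW$, $\partial_r A$) can be written as a finite linear combination $\sum c_j A_j$ with constant coefficients $c_j$ and $A_j\in\bar{\mathcal P}_{k+1}$, possibly supplemented by operators of the form $\tfrac{1}{r^m}\cdot \tilde A$ with $\tilde A\in\bar{\mathcal P}_{k+1-m}$ — which again sit inside some $\bar{\mathcal P}_{k+1}$ because $\tfrac{1}{r}\in\{D_r,\tfrac{1}{r}\}$ is one of the admissible building blocks. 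The prototypical instance, useful already for $i=2\to i=3$, is $\partial_r^2 = D_r\partial_r - \tfrac{2}{r}\partial_r = \bar{\mathcal D}_2 - \tfrac{2}{r}\bar{\mathcal D}_1$, which I would then combine via $\tfrac{1}{r}\bar{\mathcal D}_1 = \tfrac{1}{r}\partial_r\in\bar{\mathcal P}_2$ to place the correction term inside the right class. Applying this to $W\bar{\mathcal D}_{i-1}$ extracts $\bar{\mathcal D}_i$ plus a remainder living in $\bar{\mathcal P}_i$, so that $i(\partial_r e)(W\bar{\mathcal D}_{i-1}X)$ contributes exactly $i$ further copies of $(\partial_r e)\bar{\mathcal D}_i X$ plus admissible $k'\ge 2$ terms. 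Adding the leading $(\partial_r e)\bar{\mathcal D}_i X$ produces the coefficient $i+1$ claimed in the statement.

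The main obstacle — and really the only non-mechanical point — is this combinatorial/structural closure: I need the sums $(\partial_r A e)(BX)$ (which raises the weight on $e$ by one, forcing $k\mapsto k+1\ge 3$) and $(Ae)(WBX)$ (which raises the weight on $X$, so $i-k\mapsto i-k+1$, keeping total weight $i+1$), together with the remainders from $W\bar{\mathcal D}_{i-1} - \bar{\mathcal D}_i$, to all fit within the index range $2\le k'\le i+1$ with $A'\in\bar{\mathcal P}_{k'}$, $B'\in\bar{\mathcal P}_{i+1-k'}$. All of this reduces to the two closure facts above plus the observation that $1\in\bar{\mathcal P}_0$ and $\partial_r\in\bar{\mathcal P}_1$ absorb the edge cases $B'= \mathrm{id}$ and the single $\partial_r e$ factor, respectively.
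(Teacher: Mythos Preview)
Your proposal is correct and follows the same inductive strategy as Lemma B.2 of \cite{guo2018continued}, which is exactly what the paper does here: the paper gives no independent proof of this commutator identity and simply cites that reference. One point you leave slightly implicit is the mechanism by which the remainder $W\bar{\mathcal D}_{i-1}-\bar{\mathcal D}_i = \pm\tfrac{2}{r}\bar{\mathcal D}_{i-1}$ produces a $k'\ge 2$ term rather than a $k'=1$ term: the scalar factor $\tfrac{1}{r}$ can be shuttled to the $e$-side, so $(\partial_r e)(\tfrac{1}{r}\bar{\mathcal D}_{i-1}X)=(\tfrac{1}{r}\partial_r e)(\bar{\mathcal D}_{i-1}X)$ with $\tfrac{1}{r}\partial_r\in\bar{\mathcal P}_2$ acting on $e$; your closure discussion is consistent with this, but it is worth stating the maneuver explicitly.
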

Finally we give an important estimate from Lemma A.3 \cite{guo2018continued} which lets us control $\PC_i$ using $\D_i$.
\begin{lemma}\label{L:control} Suppose $\D_i X$ is bounded in $L^2([0,\frac34], r^2d r)$. Then we have the following estimate: 
\begin{equation}
\sum_{ \mathfrak D_i   \in \mathcal P_i}\int_0^\frac{3}{4} | \mathfrak D_i X|^2  r^2\psi^2 d r \lesssim \int_0^\frac{3}{4} |\D_i X|^2  r^2\psi^2 d r 
\end{equation}
where $\psi \geq 0$ is a smooth cutoff function satisfying $\psi=1$ on $[0,\frac12]$, $\psi=0$ on $[\frac34,1]$, and $\psi'\leq 0$. 
\end{lemma}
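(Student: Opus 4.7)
\medskip

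\noindent\textbf{Proposal.} The plan is to prove Lemma \ref{L:control} by induction on $i$, reducing each $\mathfrak D_i \in \mathcal P_i$ to $\mathcal D_i$ via a chain of weighted Hardy inequalities that systematically trade factors of $1/r$ for $\partial_r$. Recall that by definition every $\mathfrak D_i \in \mathcal P_i$ is a composition of $i$ operators, each being $\partial_r$ or $1/r$ (grouped into blocks $\partial_r V$ with $V \in \{D_r,1/r\} = \{\partial_r + 2/r,\, 1/r\}$), while $\mathcal D_i$ is the particular element that chooses $V_k = D_r$ at every stage. The task is therefore to show that replacing any $D_r$ factor in $\mathcal D_i$ by a $1/r$ factor still gives an operator controlled in $L^2(r^2\psi^2\,dr)$ by $\mathcal D_i X$.

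\medskip

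\noindent\textbf{Base cases.} For $i=0$ the statement is trivial. For $i=1$, the only nontrivial element is $\mathfrak D_1 = 1/r$, so one must prove
$$ \int_0^{3/4} X^2 \psi^2\, dr \;\lesssim\; \int_0^{3/4} (D_r X)^2 r^2 \psi^2\, dr. $$
The plan is to set $Y := r^2 X$, observe $\partial_r Y = r^2 D_r X$, and apply the one-dimensional weighted Hardy inequality $\int (Y\psi)^2 r^{-4}\,dr \lesssim \int ((Y\psi)')^2 r^{-2}\,dr$, valid because $Y(0)=0$ whenever $\mathcal D_1 X \in L^2(r^2\,dr)$ forces $X$ to be bounded near the origin. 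The $\psi'$-commutator term is supported in $[1/2,3/4]$, where $r$ is bounded below and a direct estimate closes it back into the right-hand side.

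\medskip

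\noindent\textbf{Inductive step.} Assume the lemma for all orders $\le i-1$. Let $\mathfrak D_i \in \mathcal P_i$. Following the definitions \eqref{E:PEVEN}--\eqref{E:PODD}, split into cases based on the parity of $i$ and on the choice of outermost factor:
\begin{itemize}
\item If the outermost factor of $\mathfrak D_i$ already agrees with that of $\mathcal D_i$ (i.e.\ $D_r$ for odd $i$, $\partial_r D_r$ for even $i$), commute it outside and apply the inductive hypothesis to the inner operator $\mathfrak D_{i-1}$ or $\mathfrak D_{i-2}$ acting on $X$.
\item If some factor is $1/r$ instead of $D_r$ (respectively $\partial_r (1/r)$ instead of $\partial_r D_r$), use the algebraic identity $D_r - 2/r = \partial_r$ to write $1/r = \tfrac12(D_r - \partial_r)$, so that the $1/r$ factor becomes a sum of a favorable $D_r$ factor and a $\partial_r$ factor. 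The $\partial_r$-piece is then absorbed using the weighted Hardy inequality applied to $g := r^2 \mathcal D_{i-1} X$ (or the analogous lower-order quantity), just as in the base case $i=1$.
\end{itemize}
In both cases the output is a bound of the form
$$ \int_0^{3/4} |\mathfrak D_i X|^2 r^2 \psi^2\, dr \;\lesssim\; \int_0^{3/4} |\mathcal D_i X|^2 r^2 \psi^2\, dr \;+\; \sum_{j<i} \int_0^{3/4} |\mathcal D_j X|^2 r^2 \psi^2\, dr, $$
and the lower-order terms on the right can be folded into $\int |\mathcal D_i X|^2 r^2 \psi^2\,dr$ by iterated Hardy (the hypothesis $\mathcal D_i X \in L^2(r^2\,dr)$ on $[0,3/4]$ gives the needed regularity of $X$ at the origin for each Hardy application).

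\medskip

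\noindent\textbf{Main obstacle.} The chief difficulty is bookkeeping: each $\mathcal P_i$ operator is a product of $i$ noncommuting factors, and replacing an internal $1/r$ (as opposed to the outermost one) requires commuting it past subsequent $\partial_r$'s, each commutation producing additional $1/r^2$ singularities that have to be compensated using the Hardy inequality again. The cleanest way to organize this is to prove, as an auxiliary statement by induction, that every $\mathfrak D_i X$ can be written as a finite sum $\sum c_{a,b}(r)\, r^{-(i-a-b)} \partial_r^a \mathcal D_b X$ with $a+b\le i$ and $c_{a,b}$ smooth and bounded on $[0,3/4]$, so that the monomials on the right are uniformly controlled by $\mathcal D_i X$ through repeated Hardy estimates. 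Care must also be taken with the boundary contribution at $r=3/4$ in each Hardy step; this is where the cutoff $\psi$ (vanishing there) is essential, and the $(\psi')^2$ remainders are always supported in $[1/2,3/4]$ where $r$ is bounded below and no weight degeneracy occurs.
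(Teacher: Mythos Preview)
The paper does not supply its own proof of this lemma; it is quoted verbatim as Lemma~A.3 of Guo--Had\v{z}i\'c--Jang~\cite{guo2018continued}. Your inductive strategy built on weighted one-dimensional Hardy inequalities is the natural route and is essentially what that reference does, so at the level of strategy you are aligned with the source.

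There is one genuine gap in your treatment of the cutoff. In the base step you produce, after expanding $((Y\psi)')^2$, a remainder of the form $\int_{1/2}^{3/4} X^2 r^2 (\psi')^2\,dr$ and assert that ``a direct estimate closes it back into the right-hand side.'' It does not: the right-hand side carries the weight $\psi^2$, which vanishes at $r=3/4$, and there is no way in general to dominate $\int_{1/2}^{3/4} X^2 r^2 (\psi')^2\,dr$ by $\int_0^{3/4} (D_r X)^2 r^2 \psi^2\,dr$ (take $D_r X$ supported where $\psi$ is tiny). The same obstruction recurs at every inductive level and is also what prevents the lower-order terms $\int|\mathcal D_j X|^2 r^2\psi^2\,dr$, $j<i$, from being ``folded in'' as you claim; those would require a Poincar\'e inequality, not Hardy.

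The fix is to \emph{not} expand the square. With $Y=r^2X$ (so $Y'=r^2 D_rX$) integrate by parts directly:
\[
\int_0^{3/4} Y^2\psi^2 r^{-4}\,dr \;=\; -\tfrac13\int Y^2\psi^2\,d(r^{-3}) \;=\; \tfrac{2}{3}\int Y Y'\psi^2 r^{-3}\,dr \;+\; \tfrac{2}{3}\int Y^2\psi\psi'\, r^{-3}\,dr,
\]
the boundary terms vanishing by $\psi(3/4)=0$ and $Y(0)=0$. The second integral is \emph{non-positive} because $\psi'\le 0$ (this hypothesis on $\psi$ is in the lemma precisely for this reason) and can be discarded; Cauchy--Schwarz on the first then yields $\int Y^2\psi^2 r^{-4}\,dr\le\tfrac{4}{9}\int(Y')^2\psi^2 r^{-2}\,dr$ with no error term whatsoever. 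The same sign trick works at each Hardy application in the induction (one changes the power of $r$ in the antiderivative to match the level), so no lower-order remainders ever appear and the argument closes with the top-order term alone on the right.
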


\section{Hardy-Sobolev Embedding}
From Lemma C.2 \cite{guo2018continued} we have the following weighted $L^\infty$ embedding
\begin{lemma}
For any smooth $u:[0,1] \rightarrow \mathbb{R}$ and any $m \in \mathbb{Z}_{>0}$, we have
\begin{align}
\| u \|_{\infty}^2 & \lesssim \sum_{k=1}^2 \int_0^{\frac{3}{4}} ( \mathcal{D}_k u)^2 r^2 \, dr + \sum_{k=0}^{m+1} \int_{\frac{1}{4}}^1 d^{2m} (\mathcal{D}_k u)^2 \, dr \label{E:LINFTY1A} \\
\|u \|_{\infty}^2 & \lesssim \sum_{k=1}^2  \int_0^{\frac{3}{4}} ( \bar{\mathcal{D}}_k u)^2 r^2 \, dr + \sum_{k=0}^{m+1} \int_{\frac{1}{4}}^1 d^{2m} (\bar{\mathcal{D}}_k u)^2 \, dr \label{E:LINFTY1B} \\
\| \frac{u}{r} \|_{\infty}^2 & \lesssim \sum_{k=1}^2  \int_0^{\frac{3}{4}} ( \mathcal{D}_k u)^2 r^2 \, dr + \sum_{k=0}^{m+1} \int_{\frac{1}{4}}^1 d^{2m} (\mathcal{D}_k u)^2 \, dr. \label{E:LINFTY2}
\end{align}
\end{lemma}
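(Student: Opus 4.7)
My strategy is to prove all three estimates in parallel by splitting the domain via the smooth cutoff $\psi$ from Section~\ref{S:NOTATION}: write $u = \psi u + (1-\psi) u$ and observe $\psi u$ is supported in $[0,\tfrac34]$, while $(1-\psi) u$ is supported in $[\tfrac14, 1]$. It then suffices to control $\|\psi u\|_\infty$ by the first sum on the right-hand side (where only $r^2\,dr$ appears) and $\|(1-\psi)u\|_\infty$ by the second sum (where the degenerate weight $d^{2m}$ appears). The same decomposition works for $\|u/r\|_\infty$ after noting that $1/r \lesssim 1$ on the support of $1-\psi$, so the near-boundary analysis is identical to the $\|u\|_\infty$ case.

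\textbf{Near-origin piece.} My plan is to identify $\psi u$ with the radial function $U(x):=(\psi u)(|x|)$ on $\mathbb R^3$ extended by zero, so that the weighted one-dimensional integration $\int_0^{3/4} f^2 r^2\,dr$ equals $\frac{1}{4\pi}\int_{\mathbb R^3}|f(|x|)|^2\,dx$. The operators $\mathcal D_k$ and $\bar{\mathcal D}_k$ are designed precisely to be the radial restrictions of the iterated Laplacian/gradient combinations on $\mathbb R^3$ (in particular $D_r\partial_r$ is the radial Laplacian, and $\partial_r D_r$ produces a smooth odd function from smooth even input, avoiding the coordinate singularity). Consequently, the standard 3D Sobolev embedding $H^2(\mathbb R^3)\hookrightarrow L^\infty(\mathbb R^3)$ yields $\|\psi u\|_\infty^2 = \|U\|_{L^\infty(\mathbb R^3)}^2 \lesssim \sum_{k=1}^2 \int_0^{3/4}(\mathcal D_k u)^2 r^2\,dr$, where the absence of the $k=0$ term is absorbed by a Poincar{\'e} inequality using that $\psi u$ is compactly supported away from $r=1$. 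The $\bar{\mathcal D}_k$ variant is identical. For $\|u/r\|_\infty$, I would use Hardy's inequality on $\mathbb R^3$ (or equivalently interpret $U(x)/|x|$ as a vector-field magnitude) to see that $\|U/|x|\|_{L^\infty} \lesssim \|U\|_{H^2(\mathbb R^3)}$ still holds under the same right-hand side.

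\textbf{Near-boundary piece.} Here $d(r)\sim 1-r$ on $[\tfrac14,1]$ since $d\in C^k[0,1]$ with $d(1)=0$ and $d'(1)=-\bar\rho(1)^{1-\gamma}<0$ from the computation in Section~\ref{S:AFFINE}. My plan is to iterate a weighted Hardy inequality
\[
\int_{1/4}^{1} d^{2j}(v')^2\,dr \gtrsim \int_{1/4}^{1} d^{2(j-1)} v^2\,dr - C\,v(\tfrac14)^2
\]
a total of $m$ times, starting from $v=\bar{\mathcal D}_m u$ and decreasing $j$ from $m$ to $1$. This trades the degenerate weight $d^{2m}$ against $m$ derivatives while generating only boundary terms at $r=\tfrac14$, which are absorbed into the lower-order $k=0,1,\dots,m$ contributions on the right-hand side. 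The boundary terms at $r=1$ vanish because the weight $d^{2m}$ kills the boundary contribution (the smoothness of $u$ near $r=1$ plus the vanishing of $d$ is enough). After $m$ iterations I am left with an unweighted $H^1([\tfrac14,1])$ bound on $u$, from which the one-dimensional Sobolev embedding $H^1([\tfrac14,1])\hookrightarrow L^\infty$ closes the estimate.

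\textbf{Main obstacle.} The delicate point is the boundary piece, where one must iterate Hardy's inequality exactly $m$ times matching the $d^{2m}$ weight to $m+1$ available derivatives and produce admissible lower-order remainders; the key is that each Hardy step can only be executed because the boundary contribution at $r=1$ is suppressed by the polynomial vanishing of $d^{2j}$. Once the derivative/weight bookkeeping is right and one establishes (or invokes) that $d\sim 1-r$ near $r=1$, the final $H^1\hookrightarrow L^\infty$ embedding and the three-dimensional Sobolev embedding for the interior piece are standard. Since the lemma is a direct adaptation of Lemma C.2 in \cite{guo2018continued}, I would cite their proof and indicate only the modifications needed to accommodate the distance function $d$ here (which plays the role of their density-like weight).
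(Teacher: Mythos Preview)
Your proposal is correct and in fact more detailed than what the paper does: the paper simply states the lemma and attributes it to Lemma~C.2 of \cite{guo2018continued} without reproducing any argument, whereas you outline the underlying proof (cutoff decomposition, 3D Sobolev embedding for the interior via the radial identification of $\mathcal D_k,\bar{\mathcal D}_k$, and iterated weighted Hardy plus 1D Sobolev near $r=1$), and you also note that one should cite \cite{guo2018continued} for the details. Since your final suggestion is precisely the paper's approach, the two are aligned; your sketch just makes explicit the mechanism behind the cited result, with the only genuinely new observation being that here $d$ plays the role of the weight and satisfies $d\sim 1-r$ near the boundary by the computation $(\bar\rho^{\gamma-1}d)'(1)=-1$ from Section~\ref{S:AFFINE}.
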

As a corollary we have the following embedding result specifically involving the vector field $\PC_k$
\begin{cor}
For any smooth $u:[0,1] \rightarrow \mathbb{R}$ and any $m \in \mathbb{Z}_{>0}$, we have for $k \in \mathbb{Z}_{>0}$
\begin{equation}\label{E:LINFTY3}
\| \PC_k u \|_{\infty}^2 \lesssim \sum_{j=1}^2 \int_0^{\tfrac{3}{4}} ( \PC_{j+k} u)^2 r^2 \, dr + \sum_{j=0}^{m+1} \int_{\tfrac{1}{4}}^1 d^{2m} (\PC_{j+k} u)^2 \, dr.
\end{equation}
\end{cor}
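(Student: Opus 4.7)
The plan is to derive the corollary directly from the preceding Lemma by applying its estimate (\ref{E:LINFTY1A}) to the function $v := A u$ for each $A \in \PC_k$, and then rewriting the resulting $\D_j v$ integrals in terms of operators from $\PC_{j+k}$. Since $\PC_k$ is a finite collection by (\ref{E:PEVEN})--(\ref{E:PODD}), and $\|\PC_k u\|_\infty$ can be read as $\max_{A\in\PC_k}\|Au\|_\infty$, it suffices to fix an arbitrary $A\in\PC_k$ and produce the claimed inequality for $\|Au\|_\infty^2$.

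Applying (\ref{E:LINFTY1A}) to $v=Au$ immediately yields
\begin{equation*}
\|A u\|_\infty^2 \lesssim \sum_{j=1}^2 \int_0^{3/4} (\D_j(Au))^2\, r^2\,dr + \sum_{j=0}^{m+1} \int_{1/4}^1 d^{2m} (\D_j(Au))^2\,dr,
\end{equation*}
so the remaining task is the algebraic reduction showing that each integral $\int(\D_j(Au))^2 w\,dr$, with $w = r^2$ on $[0,3/4]$ and $w = d^{2m}$ on $[1/4,1]$, is controlled by $\sum_{B\in\PC_{j+k}}\int (Bu)^2 w\,dr$.

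For this reduction I would use the fact that $\D_j$ itself belongs to $\PC_j$ (immediate from the definitions in Section \ref{S:NOTATION} since $D_r\in\{D_r,\tfrac{1}{r}\}$), so both $\D_j$ and $A$ are words of length $j$ and $k$ respectively in the atomic operators $\{\partial_r,\,D_r,\,\tfrac{1}{r}\}$. The composite $\D_j\circ A$ is therefore a word of length $j+k$, and the elementary commutation rules
\begin{equation*}
\partial_r\cdot\tfrac{1}{r} = \tfrac{1}{r}\partial_r - \tfrac{1}{r^2},\qquad D_r = \partial_r + \tfrac{2}{r},
\end{equation*}
allow one to normalize this word into the alternating $\partial_r V_\ell$ pattern that defines $\PC_{j+k}$ in (\ref{E:PEVEN})--(\ref{E:PODD}), modulo lower-order words carrying additional $\tfrac{1}{r^p}$ factors. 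After integration against the $r^2\,dr$ weight on $[0,3/4]$, each such $\tfrac{1}{r^p}$ remainder is converted into an admissible derivative term by the standard Hardy-type inequality $\int_0^{3/4} u^2/r^{2p}\cdot r^2\,dr\lesssim \int_0^{3/4}(\partial_r u)^2/r^{2p-2}\cdot r^2\,dr$ (applicable because our vector fields are paired with the three-dimensional radial measure); on $[1/4,1]$ the factor $\tfrac{1}{r}$ is uniformly bounded and no Hardy step is needed.

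The main obstacle is the bookkeeping in the algebraic normalization and the attendant Hardy reduction: one must verify that the derivative count is preserved at each commutation, and that the leftover $\tfrac{1}{r^p}$ terms do in fact land in the span of $\{Bu : B\in\PC_{j+k}\}$ after integration with weight $r^2\,dr$. I would organize the proof by induction on $j$, with base case $j=1$ handled by a direct expansion of $D_r\cdot A$ or $\partial_r D_r\cdot A$ and matching of the resulting atoms to the $\PC_{k+1}$, $\PC_{k+2}$ normal forms. Once this pointwise/integral reduction is established, maximizing over the finite set $A\in\PC_k$ recovers the estimate (\ref{E:LINFTY3}) as stated.
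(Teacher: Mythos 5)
Your opening move is the same as the paper's: apply the weighted embedding to $v=Au$ for each $A$ in the finite set $\PC_k$. But the reduction step you propose afterwards is both unnecessary and, as written, does not close. The whole point of the definitions of $\D_j$, $\bar{\D}_j$, $\PC_i$, $\bar{\PC}_i$ in Section \ref{S:NOTATION} is that no normalization is ever needed: for $k$ even, every $A\in\PC_k$ is a word $\partial_r V_{k/2}\cdots\partial_r V_1$ beginning (leftmost) with $\partial_r$, and $\D_j$ ends (rightmost) with $D_r\in\{D_r,\tfrac1r\}$, so the composite $\D_j\circ A$ is \emph{already} an admissible alternating word, i.e.\ an element of $\PC_{j+k}$ — there are no commutators and no remainder terms. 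For $k$ odd the composite $\D_j\circ A$ is genuinely not admissible (it contains two adjacent $V$'s), and the paper's fix is not to commute but to use $\bar{\D}_j=\D_{j-1}\partial_r$ together with the companion embedding (\ref{E:LINFTY1B}): then $\bar{\D}_j\circ A\in\PC_{j+k}$ exactly. Your proposal only invokes (\ref{E:LINFTY1A}) and never uses (\ref{E:LINFTY1B}), which is precisely why you are forced into the commutation machinery.

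That machinery is where the gap lies. Commuting, e.g., $D_r\cdot\tfrac1r=\tfrac1r\partial_r+\tfrac1{r^2}$ produces remainder words carrying factors $r^{-p}$ with $p\ge2$ (and words in $\PC_k$ may contain several $\tfrac1r$ atoms, so $p$ can be large) attached to \emph{fewer} derivatives of $u$. Near the origin the right-hand side of (\ref{E:LINFTY3}) contains only $\PC_{k+1}u$ and $\PC_{k+2}u$ against the measure $r^2\,dr$, so these lower-order, more singular remainders are not in the span you need; absorbing them would require Hardy inequalities of the form $\int_0^{3/4}g^2 r^{2-2p}\,dr\lesssim\int_0^{3/4}(\partial_r g)^2 r^{4-2p}\,dr$, which for $p\ge2$ fail without vanishing of $g$ to sufficient order at $r=0$ (already for $p=1$ the inequality $\int_0^{3/4}g^2\,dr\lesssim\int_0^{3/4}(\partial_r g)^2r^2\,dr$ is false for $g\equiv1$), and even when applicable they return terms that are again outside $\{Bu:B\in\PC_{j+k}\}$. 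So the induction you sketch would not terminate in the claimed form. The fix is simply to drop the commutation step entirely and split into the two parities, using (\ref{E:LINFTY1A}) with $\D_j\circ A\in\PC_{j+k}$ for $k$ even and (\ref{E:LINFTY1B}) with $\bar{\D}_j\circ A\in\PC_{j+k}$ for $k$ odd, which is exactly what the paper does.
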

\begin{proof} 
First note
\begin{equation}
\PC_k u = \sum_{\ell=0}^k c_\ell (r) \D_\ell (H)
\end{equation}
where $c_\ell$ are smooth functions of $r$ on $[\tfrac{1}{4},1]$. Next note
\begin{align}
\D_j (\PC_k u) &= \PC_{j+k} u \text{ for } k \text{ even,} \\
\bar{\D}_j (\PC_k u) &= \PC_{j+k} u \text{ for } k \text{ odd.}
\end{align}
Then (\ref{E:LINFTY3}) follows from (\ref{E:LINFTY1A})-(\ref{E:LINFTY1B}).
\end{proof}

\section{Time Based Inequalities}
We have a simple but crucial result concerning $a(\tau)$ and the related quantities $a_1$ and $a_0$.
\begin{lemma}\label{L:USEFULTAULEMMAGAMMALEQ5OVER3}
Assume $\gamma > 1$. Fix an affine motion $a(t)$ from the set $\mathscr{S}$ under consideration.
Let
\begin{equation}\label{E:MU1MU0DEFNGAMMALEQ5OVER3}
a_1:=\lim_{\tau \rightarrow \infty} \frac{a_\tau(\tau)}{a(\tau)}, \quad a_0:=\frac{d(\gamma)}{2}a_1,
\end{equation}        
where $d(\gamma)=\begin{cases}
3\gamma - 3 & \text{ if } \  1<\gamma\leq \frac53 \\
2 & \text{ if } \  \gamma>\frac53
\end{cases}$. Then
\begin{align}
0<a_0&=a_0(\gamma)\le a_1, \label{E:MU0MU1INEQGAMMALEQ5OVER3}\\
e^{a_1\tau} \lesssim & \ a(\tau)  \lesssim  e^{a_1\tau}, \ \ \tau\ge0. \label{E:EXPMU1MUINEQGAMMALEQ5OVER3}
\end{align}    
\end{lemma}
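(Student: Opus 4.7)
The plan is to derive the needed asymptotics directly from the conservation law for the ODE $a_{tt} = a^{2-3\gamma}$ and then translate them to the $\tau$ variable via the defining relation $d\tau/dt = 1/a$.

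First, multiplying the ODE by $a_t$ and integrating yields the first integral $\tfrac{1}{2}a_t^2 - \tfrac{a^{3-3\gamma}}{3-3\gamma} = E_0$, where $E_0$ depends only on $(a(0),a_t(0))$. Since $\gamma>1$, the potential term $\tfrac{a^{3-3\gamma}}{3-3\gamma}$ is negative and tends to $0$ as $a\to\infty$. Combined with the asymptotic $a(t)\sim 1+t$ quoted from Theorem~3 of \cite{MR3634025}, this forces $E_0>0$ and $a_t(t)\to \sqrt{2E_0}>0$ as $t\to\infty$. The chain rule $a_\tau = a\cdot a_t$ then gives $a_\tau/a = a_t$, and hence $a_1 = \lim_{\tau\to\infty} a_\tau/a = \sqrt{2E_0}>0$.

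Next, \eqref{E:MU0MU1INEQGAMMALEQ5OVER3} reduces to checking that $d(\gamma)\le 2$. By definition $d(\gamma)=3\gamma-3$ for $1<\gamma\le \tfrac{5}{3}$ and $d(\gamma)=2$ for $\gamma>\tfrac{5}{3}$; in both regimes $0<d(\gamma)\le 2$, so $a_0=\tfrac{d(\gamma)}{2}a_1\in(0,a_1]$.

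For the two-sided exponential bound, rewrite the first integral as $a_t^2 - a_1^2 = \tfrac{2a^{3-3\gamma}}{3-3\gamma}$, which gives $|a_t - a_1|\lesssim a^{3-3\gamma}$. Because $a_\tau/a = a_t\to a_1>0$, for any small $\varepsilon\in(0,a_1/2)$ there is $\tau_*>0$ such that $(a_1-\varepsilon)\le (\ln a)_\tau \le (a_1+\varepsilon)$ for $\tau\ge \tau_*$, so $a(\tau)\gtrsim e^{(a_1-\varepsilon)\tau}$ on that range. Feeding this back into the bound on $|a_t-a_1|$ produces $|(\ln a)_\tau - a_1|=O(e^{-\delta\tau})$ for some $\delta>0$, which is integrable on $[\tau_*,\infty)$. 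Hence $\ln a(\tau) - a_1\tau$ converges to a finite limit, so $a(\tau)/e^{a_1\tau}$ tends to a strictly positive constant, and \eqref{E:EXPMU1MUINEQGAMMALEQ5OVER3} follows on $[\tau_*,\infty)$; continuity and positivity of $a$ on $[0,\tau_*]$ extend the estimate to all of $[0,\infty)$.

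The only subtle point is the first step: the strict positivity of $E_0$, and hence of $a_1$, is guaranteed by restricting to expanding affine motions from $\mathscr{S}$, for which the cited linear growth $a(t)\sim 1+t$ already encodes the required energy condition $E_0>0$.
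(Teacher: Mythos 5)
Your proof is correct, and it takes a genuinely different, more self-contained route than the paper. The paper's proof of \eqref{E:EXPMU1MUINEQGAMMALEQ5OVER3} simply invokes Lemma A.1 of Had\v zi\'c--Jang \cite{1610.01666} for $1<\gamma\le\tfrac53$ and, for $\gamma>\tfrac53$, additionally cites Lemma 6 of Sideris \cite{MR3634025} to obtain the expansion $a(t)=a_0+ta_1+m(t)$ with $|m(t)|=o(1+t)$ and $|\partial_t m(t)|\lesssim (1+t)^{3-3\gamma}$ before again appealing to \cite{1610.01666}. You instead work directly with the conserved quantity of the ODE $a_{tt}=a^{2-3\gamma}$, deduce $a_t\to\sqrt{2E_0}$, convert to the $\tau$ variable via $a_\tau/a=a_t$, and obtain the two-sided exponential bound by a bootstrap: a crude lower bound $a\gtrsim e^{(a_1-\varepsilon)\tau}$ feeds into $|a_t-a_1|\lesssim a^{3-3\gamma}$ to show $(\ln a)_\tau - a_1$ is exponentially decaying hence integrable. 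This is more elementary, avoids black-box citations, and makes the mechanism transparent; the paper's route is shorter on the page but pushes all the analysis into references. One small simplification you could make in the last paragraph: strict positivity of $E_0$ does not require invoking the asymptotic $a(t)\sim 1+t$ at all, since evaluating the first integral at $t=0$ gives $E_0=\tfrac12 a_t(0)^2+\tfrac{a(0)^{3-3\gamma}}{3\gamma-3}>0$ for any $a(0)>0$ and $\gamma>1$, which is automatic for every member of $\mathscr{S}$.
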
 
\begin{proof}
The result (\ref{E:MU0MU1INEQGAMMALEQ5OVER3}) immediately follows from the definition of $a_0$. For (\ref{E:EXPMU1MUINEQGAMMALEQ5OVER3}), when $1 < \gamma \leq \frac{5}{3}$, we directly apply Lemma A.1 \cite{1610.01666} to obtain the result by considering the special case of the full 3D problem where $A(t)=\text{diag}(a(t),a(t),a(t))$. For $\gamma > \frac{5}{3}$, using (\ref{E:AFFREQ1}) and similarly considering $A(t)=\text{diag}(a(t),a(t),a(t))$, we can apply Lemma 6 \cite{MR3634025} to express $a(t)$ in the form
\begin{equation}
a(t)=a_0+ta_1+m(t),
\end{equation}
such that $a_0,a_1$ are time-independent and $m(t)$ satisfies the bounds
\begin{equation}
|m(t)|=o_{t \rightarrow \infty} (1+t), \, |\partial_t m(t) | \lesssim (1+t)^{3-3\gamma}.
\end{equation}
We also recall $a(t) \sim 1+t$. Then (\ref{E:EXPMU1MUINEQGAMMALEQ5OVER3}) follows from Lemma A.1 \cite{1610.01666} in this case also.
\end{proof}

\section{Local Well-Posedness}\label{A:LWP}
Here we sketch the proof of the local well posedness Theorem \ref{T:LWP} for our system. We first recall Theorem \ref{T:LWP}.
\begin{manualtheorem}{2.1}
Suppose $\gamma > 1$. Fix $N \geq 8$. Let $k \geq N$ in (\ref{E:PHIDEMAND}). Then there are $\epsilon_0>0$, $\lambda > 0$ and $T>0$ such that for every $\epsilon \in (0,\epsilon_0]$, $\lambda \in (0,\lambda_0]$ and pair of initial data for (\ref{E:HEQN}) $(H_0,\partial_\tau H_0)$ satisfying $\mathcal{S}^N(H_0,\partial_\tau H_0) \leq \epsilon$ and $\| H(0) \|^2_0 \leq \lambda$, there exists a unique solution $(H(\tau),H_\tau(\tau)):[0,1] \rightarrow \mathbb{R} \times \mathbb{R}$ to (\ref{E:HEQN})-(\ref{E:THETAICGAMMALEQ5OVER3}) for all $\tau \in [0,T]$. The solution has the property $\mathcal{S}^N(H,H_\tau) \lesssim \epsilon$ for each $\tau \in [0,T]$. Furthermore, the map $[0,T]\ni\tau\mapsto\mathcal{S}^N(\tau)\in\mathbb R_+$ is continuous.
\end{manualtheorem}
\begin{proof}[Sketch of proof] The proof follows by adapting the argument in \cite{JaMa2009,doi:10.1002/cpa.21517}. Notably, \cite{JaMa2009,doi:10.1002/cpa.21517} establishes the existence theory based on a suitable approximate scheme and a priori bounds. We will design the approximate scheme for $\mathfrak{H}:=D_r H$ and $H$ from $\mathfrak{H}$ from which we can apply the result of \cite{JaMa2009,doi:10.1002/cpa.21517}. The $j$th approximations $(\mathfrak{H}_j,\partial_\tau \mathfrak{H}_j)$ and $(H_j,\partial_\tau H_j)$ are constructed as follows. The initial data $(H_0,\partial_\tau H_0)$ such that $\mathcal{S}^N(H_0,\partial_\tau H_0) \leq \epsilon$ is used for the first approximation $j=1$, that is, we let $(\mathfrak{H}_1,\partial_\tau \mathfrak{H}_1)=(D_r H_0, D_r \partial_\tau H_0 )$ and $(H_1,\partial_\tau H_1)=(H_0,\partial_\tau H_0)$. Then we obtain the approximate $(j+1)^{th}$ solutions by induction: for $j \geq 1$, we let $(\mathfrak{H}_{j+1},\partial_\tau \mathfrak{H}_{j+1})$ solve the linear PDE
\begin{align}
&a^{3 \gamma-3} \partial_{\tau}^2 \mathfrak{H}_{j+1} + a^{3 \gamma-4} a_{\tau} \partial_\tau \mathfrak{H}_{j+1} - \gamma \frac{\xi^4}{\J^{\gamma+1}} \mathcal{L}_1 \mathfrak{H}_{j+1} - \mathfrak{H}_{j+1} \notag \\
&= \mathcal{D}_1 \left[\frac{(H_j)^2}{r}\right] - \mathcal{D}_1 \left[ r \mathcal{R}_1[\tfrac{H_j}{r}] + r \mathcal{R}_2[\tfrac{H_j}{r}] +r \mathcal{R}_3[\tfrac{H_j}{r}] \right] + \gamma C_1[H_j] + \gamma \partial_r \left( \frac{\xi^4}{\J^{\gamma+1}} \right) L_0 H_j, \label{E:LWPLINEAR}
\end{align}
with the initial data $(\mathfrak{H}_{j+1},\partial_\tau \mathfrak{H}_{j+1})|_{\tau=0}=(D_r H_0, D_r \partial_\tau H_0 ).$ Equation (\ref{E:LWPLINEAR}) mimics (\ref{E:PREESTIMATEHAPPLYDI}) for $i=1$. The right hand side (\ref{E:PREESTIMATEHAPPLYDI}) is evaluated using $H_j$ which is indicated by the subscript $j$. The bound $\mathcal{S}^N (H_j, \partial_\tau H_j) < \infty$ depends only on $\epsilon$ and hence we can apply the duality argument in \cite{JaMa2009,doi:10.1002/cpa.21517} to obtain the existence of $(\mathfrak{H}_{j+1},\partial_\tau \mathfrak{H}_{j+1})$. We then define $H_{j+1}$ by
\begin{equation}
H_{j+1}=\frac{1}{r^2} \int_0^r \mathfrak{H}_{j+1} (r')^2 \, dr'
\end{equation}
and have that $\mathcal{S}^N (H_{j+1},\partial_\tau H_{j+1})<\infty$ from a priori estimates with this bound depending only on $\epsilon$. Finally, as $j \rightarrow \infty$, we extract a subsequence and obtain the limit $(H,\partial_\tau H)$ of $(H_j,\partial_\tau H_j)$ that is a solution to (\ref{E:HEQN}) on $[0,T]$ for some $T=T(\epsilon)>0$ with $\mathcal{S}^N(H,\partial_\tau H)\lesssim \epsilon$.
\end{proof}

\section{Coercivity Estimates}\label{A:COERCIVITY}
We give a useful result which will let us handle time weights with negative powers which are present in our equation structure when $\gamma > \frac53$.
\begin{lemma}[Coercivity Estimate]\label{L:COERCIVITY}
Let $(H, H_\tau):[0,1] \rightarrow \mathbb R \times \mathbb R$ be a unique local solution to (\ref{E:HEQN})-(\ref{E:THETAICGAMMALEQ5OVER3}) on $[0,T]$ for $T>0$ fixed with $\|H(0)\|_0^2 < \infty$ and assume $(H, H_\tau)$ satisfies the a priori assumptions (\ref{E:SNAPRIORI})-(\ref{E:DOUBLEPARTIALRTHETAAPRIORI}). Fix $N \geq 8$. Let $k \geq N$ in (\ref{E:PHIDEMAND}). Fix $0 \leq i \leq N-1$. Then for all $\tau \in [0,T]$, we have
\begin{equation}
\| \D_i H \|^2_i \lesssim \sup_{0 \leq \tau' \leq \tau} \{ a^2 \| \D_i H_\tau \|^2_i \} + \| \D_i H(0) \|^2_i.
\end{equation}
\end{lemma}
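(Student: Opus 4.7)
The plan is to prove the estimate by the fundamental theorem of calculus in $\tau$ combined with the integrability of $a^{-1}$ in time, exploiting the exponential growth of $a(\tau)$ established in Lemma D.1. The operator $\D_i$ acts purely in $r$ and therefore commutes with $\partial_\tau$, so
\begin{equation}
\D_i H(\tau,r) = \D_i H(0,r) + \int_0^\tau \D_i H_\tau(\tau',r)\,d\tau'.
\end{equation}

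First I would apply the triangle inequality in the weighted $L^2$-norm $\|\cdot\|_i$ together with Minkowski's integral inequality to obtain
\begin{equation}
\|\D_i H(\tau)\|_i \le \|\D_i H(0)\|_i + \int_0^\tau \|\D_i H_\tau(\tau')\|_i\,d\tau'.
\end{equation}
Next, inserting the factor $a^{-1}(\tau')a(\tau')=1$ and pulling the supremum in $\tau'$ out of the time integral gives
\begin{equation}
\int_0^\tau \|\D_i H_\tau(\tau')\|_i\,d\tau' \le \Bigl(\sup_{0\le\tau'\le\tau} a(\tau')\|\D_i H_\tau(\tau')\|_i\Bigr)\int_0^\tau a^{-1}(\tau')\,d\tau'.
\end{equation}

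Then I would invoke Lemma D.1, specifically the lower bound $a(\tau')\gtrsim e^{a_1\tau'}$ with $a_1>0$, which yields the uniform bound $\int_0^\tau a^{-1}(\tau')\,d\tau'\lesssim a_1^{-1}$ for all $\tau\ge0$. Squaring the resulting inequality and using $(x+y)^2\lesssim x^2+y^2$ produces the claimed estimate
\begin{equation}
\|\D_i H(\tau)\|_i^2 \lesssim \|\D_i H(0)\|_i^2 + \sup_{0\le\tau'\le\tau}\bigl\{a^2(\tau')\|\D_i H_\tau(\tau')\|_i^2\bigr\}.
\end{equation}

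There is no real obstacle here: the argument is a clean application of the FTC plus Minkowski's integral inequality, and the only input from the structure of the problem is the exponential growth of $a$ which guarantees time-integrability of $a^{-1}$. The restriction $0\le i\le N-1$ (rather than $i\le N$) plays no role in the argument itself; it appears only because the lemma will be invoked to control subtop-order contributions to $\mathcal{C}^{N-1}$, for which no $N+1$-derivative information on $H_\tau$ is available.
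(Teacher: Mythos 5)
Your proposal is correct and follows essentially the same route as the paper: FTC in $\tau$, insertion of $a^{-1}a$, extraction of the supremum, time-integrability of $a^{-1}$ via the exponential lower bound on $a$, and a final squaring step. The only cosmetic difference is that you invoke Minkowski's integral inequality to pass to the $\|\cdot\|_i$ norm before extracting the supremum, whereas the paper argues pointwise and then applies Cauchy's inequality at the end; both are sound.
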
 
\begin{proof}
Applying the fundamental theorem of calculus and the time integrability of $a^{-1}$
\begin{align}\label{E:COERCIVITYPROOF1A}
\D_i H = \int_0^\tau \D_i H d \tau' + \D_i H (0) &= \int_0^\tau a^{-1} a \D_i H_\tau d \tau' + \D_i H (0) \notag \\
& \lesssim \sup_{0 \leq \tau \leq \tau'} \{ a \D_i H_\tau \} + \D_i H (0).
\end{align}
Therefore using Cauchy's inequality ($xy \lesssim x^2 + y^2,$ $x,y \in \mathbb{R}$) 
\begin{equation}\label{E:COERCIVITYPROOF1B}
\| \D_i H \|^2_i \lesssim \sup_{0 \leq \tau \leq \tau'} \{ a^2 \|  \D_i H_\tau \|^2_i \} + \| \D_i H (0)\|^2_i.
\end{equation}
\end{proof}


\begin{thebibliography}{10}


\bibitem{borisov2009hamiltonian}
Borisov, A.V., Kilin, A.A.,  and Mamaev, I.S.:
\newblock The {H}amiltonian dynamics of self-gravitating liquid and gas
  ellipsoids.
\newblock {\em Regular and Chaotic Dynamics}, 14(2):179--217, 2009.

\bibitem{buckmaster2020shock}
Buckmaster, T., Shkoller, S., and Vicol, V.:
\newblock Shock formation and vorticity creation for 3d {E}uler
\newblock {\em arXiv preprint arXiv:2006.14789}, 2020


\bibitem{evans2002comments}
Evans, D. J., Rondoni, L.:
\newblock Comments on the entropy of nonequilibrium steady states.
\newblock {\em Journal of Statistical Physics}, 109(3-4):895--920, 2002

\bibitem{MiCr}
Christodoulou, D., Miao, S.:
{\em Compressible Flow and Euler's Equations}, Surveys in Modern Mathematics 
Vol. 9, International Press (2014)

\bibitem{Coutand2012}
Coutand, D., Shkoller, S.:
\newblock Well-posedness in smooth function spaces for the moving-boundary
  three-dimensional compressible {E}uler equations in physical vacuum.
\newblock {\em Archive for Rational Mechanics and Analysis}, 206(2):515--616,
  Nov 2012.

\bibitem{10.2307/24902147}
Dyson, F.:
\newblock Dynamics of a spinning gas cloud.
\newblock {\em Journal of Mathematics and Mechanics}, 18(1):91--101, 1968.


\bibitem{grassin1998global}
Grassin, M.:
\newblock Global smooth solutions to {E}uler equations for a perfect gas.
\newblock {\em Indiana University mathematics journal}, pages 1397--1432, 1998.

\bibitem{guo2018continued}
Guo, Y., Had\v zi\'c, M., Jang, J.: 
\newblock Continued Gravitational Collapse for Newtonian Stars.
\newblock {\em arXiv preprint arXiv:1811.01616}, 2018.


\bibitem{1610.01666}
Had\v zi\'c, M., Jang, J.:
\newblock Expanding large global solutions of the equations of compressible
  fluid mechanics.
\newblock {\em Inventiones mathematicae}, 214(3):1205--1266, Dec 2018.

\bibitem{hsu2008black}
Hsu, S.D.H., Reeb, D.:
\newblock Black hole entropy, curved space and monsters
\newblock {\em Physics Letters B}, 658(5):244--248, 2008

\bibitem{hsu2009monsters}
Hsu, S.D.H., Reeb, D.:
\newblock Monsters, black holes and the statistical mechanics of gravity
\newblock {\em Modern Physics Letters A}, 24(24):1875--1887, 2009

\bibitem{jang2014nonlinear}
Jang, J.:
\newblock Nonlinear instability theory of Lane-Emden stars
\newblock {\em Communications on Pure and Applied Mathematics}, 67(9):1418--1465, 2014.

\bibitem{JaMa2009}
Jang, J., Masmoudi, N.,
Well-posedness for compressible Euler equations with physical vacuum singularity, 
{\em Communications on Pure and Applied Mathematics}, 62(10):1327--1385, 2009.

\bibitem{doi:10.1002/cpa.21517}
Jang, J., Masmoudi, N.:
\newblock Well-posedness of compressible {E}uler equations in a physical
  vacuum.
\newblock {\em Communications on Pure and Applied Mathematics}, 68(1):61--111,
  2015.

\bibitem{kato1975cauchy}
Kato, T.:
\newblock The {C}auchy problem for quasi-linear symmetric hyperbolic systems.
\newblock {\em Archive for Rational Mechanics and Analysis}, 58(3):181--205, 1975.

\bibitem{luk2018shock}
Luk, J., Speck, J.:
\newblock Shock formation in solutions to the $2{D}$ compressible {E}uler
  equations in the presence of non-zero vorticity.
\newblock {\em Inventiones mathematicae}, 214(1):1--169, 2018.

\bibitem{majda1984compressible}
Majda, A.:
\newblock {\em Compressible Fluid Flow and Systems of Conservation Laws in
  Several Space Variables}.
\newblock Applied Mathematical Sciences. Springer New York, 1984.


\bibitem{ovsyannikov1956new}
Ovsyannikov, L.V.:
\newblock A new solution of the equations of hydrodynamics.
\newblock In {\em Dokl. Akad. Nauk SSSR}, volume 111, pages 47--49, 1956.

\bibitem{PHJ2019}
Parmeshwar, S., Had\v zi\'c, M., Jang, J.:
\newblock Global expanding solutions of compressible Euler equations with small initial densities
\newblock {\em arXiv preprint arXiv:1904.01122}, 2019


\bibitem{rickard2020global}
Rickard, C.:
\newblock Global solutions to the compressible Euler equations with heat transport by convection around Dyson's isothermal affine solutions
\newblock {\em arXiv preprint arXiv:2007.03849}, 2020

\bibitem{rickard2019global}
Rickard, C., Had\v zi\'c, M., Jang, J.:
\newblock Global existence of the nonisentropic compressible Euler equations with vacuum boundary surrounding a variable entropy state
\newblock {\em arXiv preprint arXiv:1907.01065}, 2019

\bibitem{Se1997}
Serre, D.:  
\newblock Solutions classiques globales des \'equations d'Euler pour un fluide parfait compressible. 
{\em Annales de l'Institut Fourier} 
{\bf 47}, (1997) 139--153 

\bibitem{shkoller2017global}
Shkoller, S., Sideris, T.C.:
\newblock Global existence of near-affine solutions to the compressible Euler
  equations.
\newblock {\em Archive for Rational Mechanics and Analysis}, Apr 2019.

\bibitem{sideris1985}
Sideris, T.C.:
\newblock Formation of singularities in three-dimensional compressible fluids.
\newblock {\em Comm. Math. Phys.}, 101(4):475--485, 1985.

\bibitem{SiderisSS}
Sideris, T.C.:
\newblock Spreading of the free boundary of an ideal fluid in a vacuum
\newblock {\em Journal of Differential Equations}, 2014.

\bibitem{MR3634025}
Sideris, T.C.:
\newblock Global existence and asymptotic behavior of affine motion of 3{D}
  ideal fluids surrounded by vacuum.
\newblock {\em Arch. Ration. Mech. Anal.}, 225(1):141--176, 2017.

\bibitem{Sp}
Speck, J.:
{\em Shock formation in small-data solutions to 3D quasilinear wave equations.}
Mathematical Surveys and Monographs (AMS), 1--515, 2016.

\end{thebibliography}
\end{document}